\documentclass[11pt]{amsart}

\usepackage[utf8]{inputenc}  

\pagestyle{plain}

\usepackage{amsthm}
\usepackage{amsfonts}
\usepackage{amsmath}
\usepackage{amssymb}
\usepackage{mathtools} 
\usepackage{stmaryrd}
\usepackage{marvosym}

\usepackage{xcolor}
\definecolor{darkgreen}{rgb}{0,0.45,0}
\definecolor{darkred}{rgb}{0.75,0,0}
\definecolor{darkblue}{rgb}{0,0,0.6}
\usepackage[colorlinks,citecolor=darkgreen,linkcolor=darkred,urlcolor=darkblue]{hyperref}
\usepackage{breakurl}
\usepackage[mathscr]{eucal}
\usepackage{enumerate} 
\usepackage[capitalize]{cleveref}

\usepackage{tikz}
  \usetikzlibrary{matrix,arrows,calc}

\textwidth = 6.5 in
\textheight = 9.5 in
\oddsidemargin = 0.0 in
\evensidemargin = 0.0 in
\topmargin = -.5 in
\headheight = 0 in
\headsep = 0.0 in
\parskip = 0.1in
\parindent = 0.0in

\usepackage{tikz}
\usetikzlibrary{arrows}

\theoremstyle{plain}
\newtheorem{theorem}{Theorem}[section]

\newtheorem{proposition}[theorem]{Proposition}
\newtheorem{lemma}[theorem]{Lemma}
\newtheorem{corollary}[theorem]{Corollary}

\theoremstyle{definition}
\newtheorem{definition}[theorem]{Definition}

\newtheorem{remark}[theorem]{Remark}

\Crefname{corollary}{Corollary}{Corollaries}
\Crefname{theorem}{Theorem}{Theorems}
\Crefname{proposition}{Proposition}{Propositions}

\usetikzlibrary{matrix}

\tikzset
{
  diagram/.style=
  {
    matrix of math nodes,
    column sep=2.5em,
    row sep=2.5em,
    text height=1.5ex,
    text depth=.25ex
  },
  every to/.style={font=\footnotesize},
}

\newcommand*\circled[1]{\tikz[baseline=(char.base)]{
            \node[shape=circle,draw,inner sep=1pt] (char) {#1};}}
            
\DeclareFontFamily{OT1}{pzcx}{}
\DeclareFontShape{OT1}{pzcx}{m}{it}{<-> s*[1.18] pzcmi7t}{}

\DeclareMathAlphabet{\mathpzc}{OT1}{pzcx}{m}{it}


\newcommand{\Cat}{\mathsf{Cat}} 
\newcommand{\hoCat}{\mathsf{hoCat}} 
\newcommand{\Kan}{\mathsf{Kan}} 
\newcommand{\qCat}{\mathsf{qCat}} 
\newcommand{\Set}{\mathsf{Set}} 
\newcommand{\sSet}{\mathsf{sSet}} 
\newcommand{\ssSet}{\mathsf{ssSet}} 
\newcommand{\weCat}{\mathsf{weCat}}

\newcommand{\C}{\mathpzc{C}} 
\newcommand{\D}{\mathpzc{D}} 
\newcommand{\M}{\mathpzc{M}} 


\newcommand{\J}{\mathrm{J}} 
\newcommand{\N}{\mathrm{N}} 
\newcommand{\Nf}{\mathrm{N}_\mathrm{f}} 



\newcommand{\colim}{\operatorname{colim}} 
\newcommand{\diag}{\mathrm{diag}} 
\newcommand{\ev}{\mathrm{ev}}  
\newcommand{\Ho}{\mathrm{Ho}} 
\newcommand{\Lan}{\mathrm{Lan}} 
\newcommand{\ob}{\operatorname{Ob}} 
\newcommand{\op}{\mathrm{op}} 
\newcommand{\Sd}{\mathrm{Sd}} 


\newcommand{\ha}[1]{\widehat{#1}}

\newcommand{\bbN}{\mathbb{N}} 
\newcommand{\bbR}{\mathbb{R}} 
\newcommand{\bbZ}{\mathbb{Z}} 


\newcommand{\adjoint}{\dashv} 

\renewcommand{\to}{\rightarrow} 
\newcommand{\into}{\hookrightarrow} 
\newcommand{\iso}{\cong} 




\newcommand{\R}{\mathrm{R}}
\newcommand{\Nsub}[1]{\mathrm{N}_{#1}}
\newcommand{\Ncd}{\mathbf{N}}
\newcommand{\bfNf}{\mathbf{N}_{\mathrm{f}}}

\newcommand{\ucat}[1]{{#1}^{\operatorname{cat}}}
\newcommand{\usp}[1]{{#1}^{\operatorname{sp}}}



\newcommand{\from}{\colon}               
\newcommand{\ito}{\hookrightarrow}       
\newcommand{\weto}{\stackrel{\we}{\to}}  

\newcommand{\cat}[1]{\mathpzc{#1}}    
\newcommand{\qcat}[1]{\mathscr{#1}}   

\newcommand{\nf}{\operatorname{N_f}}  

\newcommand{\cof}{\mathrm{cof}}
\newcommand{\fibr}{\mathrm{fib}}

\newcommand{\bd}{\mathord\partial} 

\newcommand{\slice}{\downarrow}


\newcommand{\we}{\sim}    

\renewcommand{\hat}{\widehat}         
\renewcommand{\tilde}{\widetilde}

\newcommand{\simp}[1]{\Delta[#1]}       
\newcommand{\bdsimp}[1]{\bd \Delta[#1]} 

\newcommand{\w}{\mathrm{w}}
\newcommand{\Reedy}{\mathrm{R}}

\newcommand{\Ex}{\operatorname{Ex}}
\newcommand{\EX}{\operatorname{\mathbf{Ex}}}

\newcommand{\id}{\operatorname{id}}

\makeatletter
\def\horn#1{\expandafter\horn@i#1,,\@nil}
\def\horn@i#1,#2,#3\@nil{\Lambda^{#2}[#1]}
\makeatother

\newcommand{\st}{such that}

\renewcommand{\phi}{\varphi}

\newcommand{\mD}{\mathbb{D}} 

\newcommand{\mnf}{\mathbb{N}_{\operatorname{f}}}  

\tikzset
{
  diagram/.style=
  {
    matrix of math nodes,
    column sep=2.5em,
    row sep=2.5em,
    text height=1.5ex,
    text depth=.25ex
  },
  cross line/.style={preaction={draw=white,-,line width=6pt}},
  every to/.style={font=\footnotesize},
  cof/.style={>->},                
  fib/.style={->>},                
  inj/.style={right hook->},       
  surj/.style={-open triangle 60}, 
  eq/.style={-,double distance=1.7pt}
}


\makeatletter
\newenvironment{ctikzpicture}
{
  \begingroup
  \smallskip
  \par
  \centering
  \begin{tikzpicture}
}{
  \end{tikzpicture}
  \par
  \smallskip
  \endgroup
  \aftergroup\@afterindentfalse
  \aftergroup\@afterheading
}
\makeatother


\begin{document}
\title{Quasicategories of frames of cofibration categories}
\subjclass[2010]{Primary: 55U35; Secondary: 18G55, 55U40}
\author{Krzysztof Kapulkin \and Karol Szumi{\l}o}
\date{\today}
\begin{abstract}
  We show that the quasicategory of frames of a cofibration category, introduced by the second-named author,
  is equivalent to its simplicial localization.
\end{abstract}
\maketitle
\section*{Introduction} \label{sec:intro}

Starting with the work of Gabriel and Zisman \cite{gabriel-zisman}, categories with weak equivalences have been used to study homotopy theories. Later, thanks to the results of Dwyer and Kan \cite{dwyer-kan:simplicial-localizations, dwyer-kan:calculate-localizations, dwyer-kan:complexes}, it became clear that the content of a homotopy theory is entirely captured by the notion a category with weak equivalences and a precise formulation of this observation was eventually given by Barwick and Kan \cite{barwick-kan}.

More precisely, they showed that the homotopy theory of categories with weak equivalences is equivalent to the homotopy theory of $(\infty,1)$-categories (presented as quasicategories or complete Segal spaces). The latter are often more convenient in practice and hence it is important to understand simplicial localization functors, i.e.\ functors associating to a category with weak equivalences the corresponding higher category. (Examples of such constructions include the classification diagram of Rezk \cite{rezk:css} and the hammock localization \cite{dwyer-kan:calculate-localizations} followed by the derived homotopy coherent nerve.)

A common problem arising while working with these constructions is the necessity of using inexplicit fibrant replacements.
These problems can be avoided if the category with weak equivalences is known to possess more structure, namely, when it is a cofibration category (or a fibration category). Indeed, given a cofibration category $\C$, one can associate to it its quasicategory of frames $\Nf\C$, introduced by the second-named author \cite{szumilo:two-models}.

The main goal of this paper is a proof that the quasicategory of frames and other constructions of simplicial localization are equivalent. Specifically, we define an enhancement of the quasicategory of frames to a complete Segal space and show that it is equivalent to the classification diagram. From this, using the results of To\"en \cite{toen:unicity}, we deduce equivalence with other notions.

In the upcoming work of the first-named author \cite{kapulkin:lccqcat-from-type-theory} our results will be used to show that the simplicial localization of any categorical model of Homotopy Type Theory is necessarily a locally cartesian closed quasicategory. Every categorical model of type theory is known to carry the structure of a fibration category \cite{avigad-kapulkin-lumsdaine} and, by our results, its simplicial localization can be realized as the quasicategory of frames. This realization proved convenient for the purpose of solving the problem in question.

The paper is organized as follows. In \Cref{sec:hoalgebra}, we review the relevant background on models of homotopy theories (or, equivalently, $(\infty,1)$-categories). In \Cref{sec:fibcat}, we collect the necessary facts about cofibration categories and the construction of the quasicategory of frames. \Cref{sec:diagrams} contains the technical heart of the paper---a proof of the compatibility of $\Nf$ with formation of diagrams, which is then used in \Cref{sec:comp-with-classification} to establish our main theorem relating the quasicategory of frames to the classification diagram. In particular, it follows that given a model category, the quasicategories of frames associated to its underlying cofibration and fibration categories are equivalent. In \Cref{sec:comp-in-model}, we supply a more direct comparison of these quasicategories.

\section{Models of homotopy theories}\label{sec:hoalgebra}

In this section, we present three models of the homotopy theory of homotopy theories: categories with weak equivalences, quasicategories, and complete Segal spaces. For future reference, we will also recall some of their basic properties.

A \textbf{category with weak equivalences} consists of a category $\C$ together with a wide subcategory $\w\C$ (i.e.\ a subcategory containing all objects of $\C$).
Morphisms of $\w\C$ will be referred to as \textbf{weak equivalences}.
A functor $F \colon \C \to \D$ between categories with weak equivalences is
\textbf{homotopical} if it takes weak equivalences of $\C$ to weak equivalences
of $\D$.

A homotopical functor $F \colon \C \to \D$ is a \textbf{Dwyer--Kan
equivalence} (or \textbf{DK-equivalence} for short) if it induces an
equivalence $\Ho F$ of homotopy categories and a weak homotopy equivalence on
mapping spaces in the hammock localizations of $\C$ and $\D$ (see \cite{dwyer-kan:simplicial-localizations, dwyer-kan:calculate-localizations}). This notion
naturally implements the idea of equivalence of homotopy theories---two
homotopy theories (presented as categories with weak equivalences) are
considered the same if their homotopy categories and mapping spaces agree. 

We will write $\weCat$ for the category of small categories with weak equivalences and consider it as a category with weak equivalences with Dwyer--Kan equivalences as weak equivalences.

A \textbf{quasicategory} is a simplicial set $\mathscr{C}$ satisfying the inner horn filling condition, i.e.\ for every $0 < i < m$ and every $\horn{m,i} \to \mathscr{C}$, there exists a filler:
\begin{ctikzpicture}
  \matrix[diagram]
  {
    |(h)| \horn{m,i} & |(C)| \qcat{C} \\
    |(s)| \simp{m} \\
  };

  \draw[->]  (h) to (C);
  \draw[inj] (h) to (s);

  \draw[->,dashed] (s) to (C);
\end{ctikzpicture}

We will write $\qCat$ for the full subcategory of $\sSet$ whose objects are quasicategories.

Given a category $\C$, one associates to it a quasicategory $\N\C$, called the \textbf{nerve} of $\C$, whose $m$-simplices are given by functors $[m] \to \C$. We will write $E[1]$ for the nerve of a contractible groupoid with two objects $0$ and $1$.

The category $\sSet$ can also be equipped with a class of maps, called 
categorical equivalences, playing the role of equivalences of homotopy theories.
We first need introduce the notion of an $E[1]$-homotopy. Two maps $f, g \colon K \to L$ of simplicial sets are $E[1]$-\textbf{homotopic} if there exists a map $H \colon K \times E[1] \to L$ whose restriction to $K \times \bdsimp{1}$ is $[f,g]$. A map $w \colon K \to L$ is a \textbf{categorical equivalence} if the induced map $[L, \mathscr{C}]_{E[1]} \to [K, \mathscr{C}]_{E[1]}$ is a bijection for every quasicategory $\mathscr{C}$, where $[X, Y]_{E[1]}$ denotes the set of $E[1]$-homotopy classes of maps $X \to Y$.

Another class of examples of quasicategories is given by \textbf{Kan complexes}, which satisfy a stronger version of the horn filling condition; that is, they are required to have horn fillers for all horns (i.e.\ we take $0 \leq i \leq m$). The full subcategory of $\qCat$ whose objects are Kan complexes will be denoted $\Kan$.
The inclusion $\Kan \into \qCat$ admits a right adjoint $\J \colon \qCat \to
\Kan$ picking out the largest Kan
complex contained in a quasicategory \cite[Thm.\ 4.19]{joyal:theory-of-quasi-cats}.

\begin{proposition}[{\cite[Prop.\ 4.26]{joyal:theory-of-quasi-cats}}]\label{prop:J-preserves-equiv}
 $\J$ carries categorical equivalences of quasicategories to homotopy equivalences of Kan complexes.\qed
\end{proposition}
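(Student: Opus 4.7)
The plan is to show that any categorical equivalence between quasicategories is in fact a two-sided $E[1]$-homotopy equivalence, to transport this through $\J$ using that $\J$ is a right adjoint and $E[1]$ is already a Kan complex, and finally to restrict along $\simp{1}\to E[1]$ to obtain an ordinary homotopy equivalence of Kan complexes.

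For the first step, let $f \colon \mathscr{C} \to \mathscr{D}$ be a categorical equivalence of quasicategories. Instantiating the defining bijection with the test quasicategory $\mathscr{C}$, the surjectivity of $f^* \colon [\mathscr{D},\mathscr{C}]_{E[1]} \to [\mathscr{C},\mathscr{C}]_{E[1]}$ produces a map $g \colon \mathscr{D} \to \mathscr{C}$ whose class is sent to $[\id_\mathscr{C}]$, so that $gf \simeq \id_\mathscr{C}$ up to $E[1]$-homotopy. Instantiating instead with $\mathscr{D}$, both $[fg]$ and $[\id_\mathscr{D}]$ in $[\mathscr{D},\mathscr{D}]_{E[1]}$ are sent by $f^*$ to the common class $[fgf]=[f]$, so the injectivity of $f^*$ yields $fg \simeq \id_\mathscr{D}$ up to $E[1]$-homotopy as well.

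For the second step, I would use that $\J$, being right adjoint to the inclusion $\Kan \into \qCat$, preserves products, together with the fact that $E[1]$---the nerve of a groupoid---is itself a Kan complex, so $\J E[1] = E[1]$. Applying $\J$ to a homotopy $\mathscr{C} \times E[1] \to \mathscr{C}$ then produces a map $\J\mathscr{C} \times E[1] \to \J\mathscr{C}$ exhibiting $\J g \circ \J f$ and $\id_{\J\mathscr{C}}$ as $E[1]$-homotopic, and symmetrically on the other side. Restricting along either vertex-preserving map $\simp{1} \to E[1]$ converts these to ordinary simplicial homotopies between maps of Kan complexes, which suffices to present $\J f$ as a homotopy equivalence with inverse $\J g$.

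The only delicate step is the first, where one must carefully exploit both surjectivity and injectivity of $f^*$ on $E[1]$-homotopy classes; the remainder is a formal consequence of $\J$ being a right adjoint and $E[1]$ being a Kan complex.
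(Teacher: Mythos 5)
Your proof is correct. Note, however, that the paper does not actually prove this proposition: it is stated with a citation to Joyal's notes (Prop.\ 4.26 of \emph{The theory of quasi-categories}) and a \verb|\qed|, so there is no in-paper argument to compare against. What you have supplied is a complete, elementary derivation from exactly the definitions the paper sets up: the standard Yoneda-style argument (surjectivity of $f^*$ at the test object $\mathscr{C}$ to produce $g$, then injectivity at $\mathscr{D}$ applied to $[fg]$ and $[\id_{\mathscr{D}}]$, using that post-composition with $f$ preserves $E[1]$-homotopy) upgrades a categorical equivalence of quasicategories to a two-sided $E[1]$-homotopy equivalence; then the facts that $\J$, as a right adjoint, preserves products, that $E[1]$ is a Kan complex so $\J E[1]=E[1]$ and $\J(\mathscr{C}\times E[1])\cong \J\mathscr{C}\times E[1]$, and that restriction along $\Delta[1]\to E[1]$ turns $E[1]$-homotopies into simplicial homotopies, finish the job. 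This is essentially the argument one finds in the cited source, but it is worth being explicit about the two small points you implicitly use: that $E[1]$-homotopy is an equivalence relation on maps into a quasicategory (so that $[X,Y]_{E[1]}$ is well defined and the class-chasing is legitimate), and that $\J$ applied to the endpoint inclusions $\mathscr{C}\times\{\epsilon\}\into\mathscr{C}\times E[1]$ recovers the corresponding inclusions for $\J\mathscr{C}$, so that $\J H$ really is a homotopy between $\J g\circ\J f$ and $\id_{\J\mathscr{C}}$. Both are routine, and your argument stands.
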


Lastly, we will need the notion of an inner isofibration. Recall that a map is an \textbf{inner fibration} if it has the right lifting property with respect to all inner horn inclusions, i.e.\ $\horn{m,i} \into \Delta[m]$ for $0 < i < m$. An \textbf{inner isofibration} is a map that that is an inner fibration and, in addition, has the right lifting property with respect to the inclusion $\delta_1 \colon \Delta[0] \into E[1]$.

As our last model for the homotopy theory of homotopy theories, we shall discuss complete Segal spaces. Before doing that, let us introduce some notation. Given a bisimplicial set $W \colon \Delta^\op \times \Delta^\op \to \Set$, we may regard it as a simplicial object $W \colon \Delta^\op \to \sSet$ in two different ways. This gives us two different Kan extensions of $W$ along the Yoneda embedding that we will denote $\usp{W}$ and $\ucat{W}$, respectively. We will also write $\usp{W}_m$ for $\usp{W}(\Delta[m])$ and $\ucat{W}_n$ for $\ucat{W}(\Delta[n])$.

A bisimplicial set $W$ is a \textbf{complete Segal space} if it satisfies the following conditions:
\begin{enumerate}
 \item it is Reedy fibrant, i.e.\ the canonical map $\usp{W}_m \to \usp{W}(\partial
\Delta [m])$ is a Kan fibration for all $m \in \bbN$;
 \item it is a Segal space, i.e.\ the canonical map $\usp{W}_m \to \usp{W}(S[m])$ is a
weak homotopy equivalence for all $m \in \bbN$, where $S[m]$ is the simplicial
subset of $\Delta[m]$ consisting of all vertices and edges connecting all pairs
of consecutive vertices (the \emph{spine} of $\Delta[m]$);
 \item it is complete, i.e.\ the canonical map $\usp{W}_0 \to \usp{W}(E[1])$ is a weak
homotopy equivalence.
\end{enumerate}

A map of bisimplicial sets $w \colon X \to Y$ is a \textbf{Rezk equivalence} if
for every complete Segal space $W$ the induced map $W^Y \to W^X$ is a levelwise
weak homotopy equivalence. In particular, every levelwise weak homotopy
equivalence of bisimplicial sets is a Rezk equivalence.

\begin{proposition}[{\cite[Prop.\ 4.4]{joyal-tierney:qcat-vs-segal}}] \label{prop:css-is-frame-in-qcat}
 A bisimplicial set $W$ is a complete Segal space if it is a frame in the
category $\qCat$, i.e.\
\begin{enumerate}
 \item it is Reedy fibrant (the canonical map $\ucat{W}_n \to \ucat{W}(\partial
\Delta[n])$ is an inner isofibration for all $n \in \bbN$);
 \item it is homotopically constant (every simplicial operator $[n] \to [n']$
induces a categorical equivalence $\ucat{W}_{n'} \to \ucat{W}_n$).\qed
\end{enumerate}
\end{proposition}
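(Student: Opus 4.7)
The plan is to verify the three defining conditions of a complete Segal space—Reedy fibrancy, the Segal condition, and completeness—directly from the two hypotheses (categorical Reedy fibrancy and homotopical constancy) on $W$. Throughout, the guiding principle is that the interplay between the Kan (spatial) and Joyal (categorical) model structures on $\sSet$, mediated by the functor $\J$ of \Cref{prop:J-preserves-equiv} and by Joyal's lemma that an inner isofibration between Kan complexes is a Kan fibration, allows us to convert categorical data in the $\ucat{W}$-direction into spatial data in the $\usp{W}$-direction.

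For Reedy fibrancy, I would first argue that each $\usp{W}_m$ is a Kan complex. Homotopical constancy forces the categorical simplicial object $[n]\mapsto \ucat{W}_n$ to have all structure maps categorical equivalences; together with $\ucat{W}_0$ being a quasicategory, a degeneracy-based argument identifies $\usp{W}_m$ (up to levelwise isomorphism) with the result of applying $\ev_m$ to $\ucat{W}_\bullet$, and one deduces that the degeneracies provide the missing outer horn fillers. Once each $\usp{W}_m$ and each $\usp{W}(\bd\Delta[m])$ is known to be a Kan complex, the Reedy matching map $\usp{W}_m \to \usp{W}(\bd\Delta[m])$—which is an inner isofibration thanks to the two-variable interpretation of hypothesis (1') applied to the cofibration $\bd\Delta[m]\into\Delta[m]$ in the spatial direction—becomes a Kan fibration by Joyal's lemma.

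For the Segal condition and completeness, I would reduce to \Cref{prop:J-preserves-equiv}. Homotopical constancy gives, for every $[n]\to[n']$, a categorical equivalence $\ucat{W}_{n'}\to\ucat{W}_n$; applying $\J$ yields a homotopy equivalence of Kan complexes. Specialising to the spine inclusion $S[m]\into\Delta[m]$ and to $\delta_1\colon\Delta[0]\into E[1]$, and combining with the pushout-product/Leibniz interpretation of the bisimplicial structure, recovers the maps $\usp{W}_m\to\usp{W}(S[m])$ and $\usp{W}_0\to\usp{W}(E[1])$ as weak homotopy equivalences, which are the Segal and completeness conditions.

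The main obstacle is bookkeeping the bisimplicial structure: the hypotheses are naturally formulated in the $\ucat{W}$-direction, whereas the conditions to verify live in the perpendicular $\usp{W}$-direction. Making this translation precise requires the two-variable (Leibniz) adjunction for the bisimplicial functor $W$, together with the fact that the classes of spatial trivial cofibrations and categorical inner anodyne maps interact through pushout-products in the expected way. Once this bookkeeping is in place, each of the three CSS conditions is a direct consequence of either Joyal's lemma or \Cref{prop:J-preserves-equiv} applied to the relevant matching map.
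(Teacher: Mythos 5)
The paper offers no proof of this statement---it is imported verbatim from Joyal--Tierney with a \qed---so your outline can only be measured against the standard argument. Your toolkit (transposition across the two simplicial directions, Joyal's theorems on isofibrations, homotopy invariance) is the right one, but the ``bookkeeping'' you defer is exactly where the argument breaks. The cleanest route is: hypothesis (1) is equivalent, by transposing the pushout-product, to the statement that for every inner anodyne map $K \to K'$ and for $\delta_1 \colon \Delta[0] \to E[1]$ (all in the \emph{spatial} direction), the induced map $\usp{W}(K') \to \usp{W}(K)$ has the right lifting property against every $\partial\Delta[n] \into \Delta[n]$, i.e.\ is a trivial Kan fibration. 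Applied to the spine inclusion $S[m] \into \Delta[m]$ (which is inner anodyne) this gives the Segal condition outright, and applied to $\delta_1$ it shows that the retraction $\usp{W}(E[1]) \to \usp{W}_0$ of the completeness map is a trivial fibration, whence completeness by 2-out-of-3. Neither step needs $\J$ or homotopical constancy; your plan to ``specialise'' homotopical constancy to the spine inclusion does not typecheck, since homotopical constancy concerns operators $[n] \to [n']$ in the categorical direction while $S[m] \into \Delta[m]$ and $E[1] \to \Delta[0]$ live in the perpendicular one.

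The genuine gap is in the Reedy fibrancy step. Your claim that $\usp{W}_m \to \usp{W}(\partial\Delta[m])$ is an inner isofibration ``thanks to the two-variable interpretation of hypothesis (1)'' is false: its transpose is the assertion that $\ucat{W}_n \to \ucat{W}(\Lambda^j[n])$ is a trivial fibration for \emph{categorical}-direction inner horns, which is not a consequence of (1) alone. Concretely, for any simplicial set $Y$ that is not a quasicategory, the bisimplicial set $W_{m,n} = Y_n$ satisfies hypothesis (1) (each $\ucat{W}_n$ is discrete), yet $\usp{W}_0 = Y$ fails even to be a quasicategory, so its matching map to the point is not an inner fibration. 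The missing ingredient---which also supplies the ``degeneracy-based argument'' you assert but never give for $\usp{W}_m$ being a Kan complex---is the resolution lemma: for a Reedy fibrant, homotopically constant simplicial object in the fibration category $\qCat$, every anodyne map $K \to L$ (outer horns included) induces a map $\ucat{W}(L) \to \ucat{W}(K)$ that is an isofibration (by Reedy fibrancy) \emph{and} a categorical equivalence (by homotopical constancy together with homotopy invariance of the matching objects), hence a trivial Kan fibration by Joyal's theorem. Transposing this against $\partial\Delta[m] \into \Delta[m]$ yields the spatial Reedy fibrancy in one step, subsuming both the Kan-ness of $\usp{W}_m$ and the fibration property of its matching map. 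As written, your proof establishes neither.
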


\begin{lemma} \label{lem:rezk-is-level-qcat}
 A Rezk equivalence $w \colon X \to Y$ between complete Segal spaces is a levelwise categorical equivalence (i.e.\ $\ucat{w}_n \colon \ucat{X}_n \to \ucat{W}_n$ is a categorical equivalence of quasicategories for all $n \in \bbN$).
\end{lemma}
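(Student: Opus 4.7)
\emph{Plan.} My approach is to handle this in two stages: first reduce from arbitrary $n$ to the case $n = 0$ using the homotopically constant structure of a complete Segal space, and then handle the case $n = 0$ by appealing to the Joyal--Tierney Quillen equivalence between the Rezk and Joyal model structures.

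For the reduction, I would apply \Cref{prop:css-is-frame-in-qcat} to view both $X$ and $Y$ as frames in $\qCat$; in particular each $\ucat{X}_n$ and $\ucat{Y}_n$ is a quasicategory, and for any simplicial operator $\alpha \colon [0] \to [n]$ the induced maps $\alpha^* \colon \ucat{X}_n \to \ucat{X}_0$ and $\alpha^* \colon \ucat{Y}_n \to \ucat{Y}_0$ are categorical equivalences. The naturality square with horizontals $\ucat{w}_n$ and $\ucat{w}_0$ and these categorical equivalences as verticals, combined with the 2-out-of-3 property for categorical equivalences (they are the weak equivalences of the Joyal model structure), shows that $\ucat{w}_n$ is a categorical equivalence if and only if $\ucat{w}_0$ is. So it suffices to treat the case $n = 0$.

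For the case $n = 0$, I would invoke Joyal and Tierney's Quillen equivalence between the Rezk model structure on bisimplicial sets and the Joyal model structure on $\sSet$, whose right adjoint is (up to natural isomorphism) the ``first column'' functor $W \mapsto \ucat{W}_0$. A Quillen equivalence reflects weak equivalences between fibrant objects, and the fibrant objects in the Rezk model structure are precisely the CSSes, so the Rezk equivalence $w \colon X \to Y$ is sent to a categorical equivalence $\ucat{w}_0 \colon \ucat{X}_0 \to \ucat{Y}_0$, which is what remained to prove.

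The main obstacle is the $n = 0$ step, which requires passing from the hypothesis (a Rezk equivalence, formulated in terms of mapping \emph{into} CSSes) to a statement about a single map of underlying quasicategories. The Joyal--Tierney comparison is the standard bridge; a more self-contained alternative would be to exhibit, for each quasicategory $\mathscr{C}$, a test CSS $W_\mathscr{C}$ such that $E[1]$-homotopy classes of maps $\ucat{X}_0 \to \mathscr{C}$ biject with Rezk-homotopy classes of maps $X \to W_\mathscr{C}$, but this essentially reconstructs the Joyal--Tierney input.
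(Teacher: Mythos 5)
Your proposal is correct and is essentially the argument the paper points to: the paper's proof simply defers to the proof of \cite[Prop.\ 4.7]{joyal-tierney:qcat-vs-segal}, which reduces to level $0$ via the homotopy-constancy of a complete Segal space and then invokes the Quillen equivalence $\ev_0$, exactly as you do. One small slip in wording: at level $0$ you need that the right Quillen functor $\ev_0$ \emph{preserves} (rather than reflects) weak equivalences between fibrant objects, which is Ken Brown's lemma and does not even require the Quillen equivalence part of the Joyal--Tierney comparison.
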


\begin{proof}
 See the proof of \cite[Prop.\ 4.7]{joyal-tierney:qcat-vs-segal}.
\end{proof}

Let $\C$ be a category with weak equivalences. The \textbf{classification diagram} of $\C$ (cf. \cite[Sec.\ 3.3]{rezk:css}) is a bisimplicial set $\Ncd\C$ whose $(m,n)$-simplices are given by:
\[ (\Ncd\C)_{m,n} = \left\{\text{homotopical functors } [m] \times \hat{[n]} \to \C \right\}\text{.} \]
Here, in $[m]$ we take only identity maps as weak equivalences, while in $\hat{[n]}$ all maps are weak equivalences. Alternatively, one may describe $\Ncd\C$ by: $\usp{(\Ncd\C)}_m = \N\w(\C^{[m]})$, where the weak equivalences in the category $\C^{[m]}$ are the natural weak equivalences (i.e.\ natural transformations whose components are weak equivalences). The functor $\Ncd \colon \weCat \to \ssSet$ is a DK-equivalence by \cite[Lem.\ 5.4, Thm.\ 6.1(i), Prop.\ 10.3]{barwick-kan}.

\section{Cofibration categories and the quasicategory of frames} \label{sec:fibcat}

In this section, we will review the background on cofibration categories and, as indicated in the Introduction, will take advantage of the structure of a cofibration category to produce a convenient model for its simplicial localization, called the quasicategory of frames. This construction was introduced in \cite{szumilo:two-models}; here, we summarize the relevant notions and techniques of this paper.

\begin{definition}
 A \textbf{cofibration category} consists of a category $\C$ together with two wide subcategories: of \textbf{cofibrations} and of \textbf{weak equivalences} such that (in what follows, an \textbf{acyclic fibration} is a morphism that is both a cofibration and a weak equivalence):
 \begin{enumerate}
  \item the class of weak equivalences satisfies \textbf{2-out-of-6} property; that is, given a composable triple of morphisms:
  \[ X \overset{f}\longrightarrow Y \overset{g}\longrightarrow Z \overset{h}\longrightarrow Z \]
  if $hg, gf$ are weak equivalences, then so are $f$, $g$, and $h$.
  \item all isomorphisms are acyclic cofibrations.
  \item pushouts along cofibrations exist; cofibrations and acyclic cofibrations are stable under pushouts.
  \item $\C$ has an initial object $0$; the canonical morphism $0 \to X$ is a cofibration for any object $X \in \C$ (that is, all objects are \textbf{cofibrant}).
  \item every morphism can be factored as a cofibration followed by a weak equivalence.
 \end{enumerate}
\end{definition}

Given a model category, its subcategory of cofibrant objects is a cofibration category. There are, however, plenty of examples of cofibration categories that do not arise as the subcategory of cofibrant objects in a model category, e.g.\  the category of topological spaces and proper maps (see \cite[Sec.\ 1.4]{szumilo:two-models} for a discussion of such examples).

There is also the dual notion of a \textbf{fibration category}. A fibration category consists of a category $\C$, together with two classes of maps: fibrations and weak equivalences, subject to the axioms dual to these of a cofibration category. The category $\qCat$ of quasicategories carries a structure of a fibration category, in which weak equivalences are categorical equivalences and fibrations are inner isofibrations. This category arises as the subcategory of fibrant objects in Joyal's model structure on simplicial sets.

\begin{definition}\leavevmode
\begin{enumerate}
 \item A functor between cofibration categories is \textbf{exact} if it preserves cofibrations, acyclic cofibrations, pushouts along cofibrations, and an initial object.
 \item An exact functor is a \textbf{weak equivalence} of cofibration categories if it induces an equivalence of homotopy categories.
\end{enumerate}
\end{definition}

(Again, there is a dual notion of an exact functor between fibration categories; such a functor is required to preserve fibrations, acyclic fibrations, pullbacks along fibrations, and a terminal object.)

The following theorem gives a useful characterization of weak equivalences between cofibration categories:

\begin{theorem}[{\cite[Thm.\ 3.19]{cisinski:categories}}] \label{thm:approximation-properties}
 An exact functor $F \colon \C \to \D$ between cofibration categories is a weak equivalence if and only if it satisfies the following \textbf{Approximation Properties}:
 \begin{enumerate}
  \item[(App1)] $F$ reflects weak equivalences;
  \item[(App2)] given a morphism $f \colon FA \to Y$ in $\D$, there exists a morphism $i \colon A \to B$ in $\C$ and a commutative square:
    \begin{ctikzpicture}
      \matrix[diagram]
      {
        |(A)| FA & |(Y)| Y \\
        |(B)| FB & |(Z)| Z \\
      };

      \draw[->] (A) to node[above] {$f$}  (Y);
      \draw[->] (A) to node[left]  {$Fi$} (B);

      \draw[->] (Y) to node[right] {$\we$} (Z);
      \draw[->] (B) to node[above] {$\we$} (Z);
    \end{ctikzpicture}
  in $\D$. \qed
 \end{enumerate}
\end{theorem}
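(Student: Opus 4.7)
The theorem is due to Cisinski and is stated here without proof, but I would prove it along the following lines, treating the two directions separately.

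The easy direction is ``$F$ weak equivalence $\Rightarrow$ (App1) and (App2).'' An equivalence of categories is conservative, and weak equivalences in a cofibration category are saturated (every morphism inverted in $\Ho\C$ is already a weak equivalence, as a consequence of the $2$-out-of-$6$ axiom combined with the factorization axiom applied to a candidate inverse). Composing these two observations gives (App1). For (App2), essential surjectivity of $\Ho F$ produces $B \in \C$ with $Y \simeq FB$ in $\Ho\D$; fullness of $\Ho F$ lifts the composite $FA \to Y \simeq FB$ to a morphism $[i]\colon A \to B$ of $\Ho\C$, representable by a zigzag in $\C$; and then applying the factorization axiom inside $\D$ to the relevant zigzag of weak equivalences assembles the desired commutative square witnessing (App2).

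The substantive direction is the converse: assuming (App1) and (App2), show $\Ho F$ is essentially surjective, full, and faithful. Essential surjectivity is almost immediate---apply (App2) to the unique map $F0 = 0_{\D} \to Y$ and read off from the resulting square that $Y \simeq FB$ in $\Ho\D$ where $B$ is the codomain of the associated cofibration $0 \to B$ in $\C$. For fullness, represent an arbitrary morphism $\alpha\colon FA \to FB$ in $\Ho\D$ by a finite zigzag of arrows and reverse weak equivalences in $\D$, and lift the zigzag stage by stage into $\C$: at each reverse-equivalence step, (App2) produces a lift into $\C$, and the pushout and factorization axioms in $\C$ allow the successive lifts to be glued coherently. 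Faithfulness is analogous but is applied to a left homotopy witnessing $\Ho F(u) = \Ho F(v)$: choose a cylinder object in $\D$ displaying the homotopy, use (App2) to transport the cylinder datum back to a cylinder in $\C$, and invoke (App1) to recognise that the comparison maps are weak equivalences, so that $u$ and $v$ become equal in $\Ho\C$.

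The main obstacle is the coherent, choice-independent lifting of zigzags in the fullness and faithfulness arguments: naive stagewise application of (App2) produces a lift only up to a controlled tower of intermediate weak equivalences, and one has to verify that the resulting morphism of $\Ho\C$ is well-defined. The cleanest way to package this bookkeeping is via the homotopy calculus of fractions for cofibration categories developed in Cisinski's article, which reduces the problem to a single-step lifting assertion following directly from the two Approximation Properties.
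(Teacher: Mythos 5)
The paper does not prove this statement: it is imported directly from Cisinski's article, so there is no in-paper argument to compare yours against. Judged on its own terms, your outline has the right skeleton---it is the standard proof strategy for the approximation theorem---and the genuinely easy pieces are correctly identified (in particular, deducing essential surjectivity of $\Ho F$ by applying (App2) to the unique map $F0 \to Y$ is exactly right). But as written there are real gaps, not just routine omissions. First, you invoke saturation of the weak equivalences (every morphism inverted in $\Ho\C$ is already a weak equivalence) as if it followed in one line from 2-out-of-6 plus factorization; it does not. Saturation of cofibration categories is itself a substantial theorem whose proof requires first constructing $\Ho\C$ via a calculus of fractions or cylinder techniques, and your (App1) argument in the easy direction genuinely depends on it.

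Second, and more seriously, every place where you must convert data commuting in a homotopy category into an honestly commutative diagram---producing the strictly commutative square of (App2) from a morphism of $\Ho\D$, gluing the stagewise lifts coherently in the fullness argument, and transporting the cylinder in the faithfulness argument---is precisely where the content of the theorem lies, and your final paragraph defers all of it to ``the homotopy calculus of fractions developed in Cisinski's article, which reduces the problem to a single-step lifting assertion.'' That reduction is not exhibited; it essentially \emph{is} the theorem. A complete argument needs the explicit description of $\Ho\C(A,B)$ as a filtered colimit of homotopy classes of maps $A \to B'$ over weak equivalences $B \to B'$, together with a verification that (App1) and (App2) induce bijections on these colimits. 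Relatedly, even in the easy direction of (App2), fullness of $\Ho F$ only yields a fraction $A \to B' \leftarrow B$ rather than the single morphism $i \colon A \to B$ the statement demands, so you must silently replace $B$ by the apex of the fraction---a small adjustment, but symptomatic of the bookkeeping your sketch leaves undone. In short: correct roadmap, but the proof is not yet there.
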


One can also define the notion of a fibration between between cofibration categories. An exact functor $P \colon \C \to \D$ is a \textbf{fibration} if it satisfies the following conditions:
 \begin{enumerate}
  \item $P$ is an isofibration;
  \item given a map $f \colon A \to B$ in $\C$ and a factorization $Pf = t j$ of $Pf$ as a cofibration followed by a weak equivalence, there exists a factorization $f = s i$ of $f$ into a cofibration followed by a weak equivalence such that $Pi =j$ and $Ps = t$.
  \item given a map $f \colon A \to B$ in $\C$ and a commutative square:
    \begin{ctikzpicture}
      \matrix[diagram]
      {
        |(A)| PA & |(B)| PB \\
        |(X)| X  & |(Y)| Y \\
      };

      \draw[->] (A) to node[above] {$Pf$}  (B);
      \draw[->] (X) to node[below] {$t$} node[above] {$\we$} (Y);

      \draw[->,cof] (A) to node[left]  {$j$} (X);
      \draw[->,cof] (B) to node[right] {$v$} node[left] {$\we$} (Y);
    \end{ctikzpicture}
  in $\D$, in which $j$ is a cofibration, $t$ is a weak equivalence, and $v$ is an acyclic cofibration, there is a commutative square:
    \begin{ctikzpicture}
      \matrix[diagram]
      {
        |(A)| A & |(B)| B \\
        |(C)| C & |(D)| D \\
      };

      \draw[->] (A) to node[above] {$f$} (B); \draw[->] (C) to
      node[below] {$s$} node[above] {$\we$} (D);

      \draw[->,cof] (A) to node[left]  {$i$} (C);
      \draw[->,cof] (B) to node[right] {$u$} node[left] {$\we$} (D);
    \end{ctikzpicture}
  in $\C$, in which $i$ is a cofibration, $s$ is a weak equivalence, and $u$ is an acyclic cofibration such that $Pi = j$, $Ps = t$, and $Pu =v$.
 \end{enumerate}

\begin{theorem}[{\cite[Thm.\ 1.14]{szumilo:two-models}}] \label{thm:fib-cat-of-cof-cat}
 The category of cofibration categories and exact functors with fibrations and weak equivalences defined above is a fibration category. \qed
\end{theorem}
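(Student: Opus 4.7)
The plan is to verify, one by one, the five axioms dual to those of a cofibration category on the category $\mathrm{CofCat}$ of small cofibration categories and exact functors, where the fibrations and weak equivalences are the ones specified above. Three of the axioms are essentially formal: (i) the $2$-out-of-$6$ property for weak equivalences descends from the $2$-out-of-$6$ property of equivalences of homotopy categories; (ii) any isomorphism of cofibration categories trivially satisfies the three conditions defining a fibration, and is evidently a weak equivalence; and (iii) the terminal cofibration category (one object, only identity morphisms, all morphisms weak equivalences and cofibrations) is terminal in $\mathrm{CofCat}$, and the unique exact functor from any $\C$ into it is a fibration.

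For the pullback axiom, given a fibration $P\from \C\to\D$ and any exact $G\from \D'\to\D$, I would form the strict $1$-categorical pullback $\C\times_\D\D'$ and equip it with the componentwise cofibrations and weak equivalences. The pushout axiom holds componentwise because $P$'s lifting property (2) lets one lift cofibrations from $\D'$ to cofibrations in $\C$ whose images agree with those chosen in $\D$; the factorization axiom in the pullback is supplied by condition (2) in the definition of a fibration applied to $P$; and 2-out-of-6, initial object, and cofibrancy are checked componentwise. Stability of fibrations and acyclic fibrations under pullback along $G$ is then a direct check using the same lifting properties.

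The main obstacle is the factorization axiom: every exact $F\from \C\to\D$ must factor as a weak equivalence followed by a fibration. My approach is the standard mapping path construction. Define a category $\rmP(F)$ whose objects are triples $(A,B,f)$ with $A\in\C$, $B\in\D$, and $f\from FA\weq B$ a weak equivalence in $\D$, and whose morphisms $(A,B,f)\to(A',B',f')$ are pairs $(a\from A\to A', b\from B\to B')$ making the evident square commute. Declare $(a,b)$ a cofibration when $a$ is a cofibration in $\C$, $b$ is a cofibration in $\D$, and the canonical map from the pushout $FA'\cup_{FA}B\to B'$ is a cofibration in $\D$; declare it a weak equivalence when both $a$ and $b$ are. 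A routine but careful check shows $\rmP(F)$ is a cofibration category, using that $\D$ has all the needed pushouts and factorizations; initial object is $(0,0,\id)$, factorizations of $(a,b)$ are produced by first factoring $a$ in $\C$, then factoring the induced map in $\D$ relative to the image of this cofibration. The functor $\C\to\rmP(F)$, $A\mapsto(A,FA,\id_{FA})$, is evidently exact, and I would verify it is a weak equivalence via the Approximation Properties of Theorem~\ref{thm:approximation-properties}: (App1) is immediate from componentwise definitions, and (App2) is proved by taking an object $(A,B,g)$ receiving a map from $(A',FA',\id)$, factoring its $\C$-component, and exploiting $g$ being a weak equivalence in $\D$ to produce the required square.

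The projection $\pi\from\rmP(F)\to\D$, $(A,B,f)\mapsto B$, is exact by construction, and the three fibration conditions reduce, respectively, to: (1) lifting an isomorphism $B\iso B'$ to an isomorphism in $\rmP(F)$ (take $A'=A$ and transport $f$), (2) lifting a factorization $\pi(a,b) = t j$ by first using the ambient $\C$ and $\D$ structures to factor $(a,b)$ componentwise, then adjusting so the chosen $\D$-piece matches $j,t$ on the nose, and (3) lifting a square of the prescribed shape, which follows by the same kind of bookkeeping because the $\C$-component is undetermined on the right and can simply be copied, while the $\D$-component is supplied by the square in $\D$. I expect this last verification, and the care needed to ensure the cofibration category axioms for $\rmP(F)$, to be where the technical work lies; the remaining axioms are then formal.
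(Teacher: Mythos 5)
This theorem is quoted by the paper from the cited source without proof, so the comparison is with the argument in \cite[Thm.\ 1.14]{szumilo:two-models}. Your overall strategy---checking the dual axioms one at a time, taking strict $1$-categorical pullbacks along fibrations, and factoring through a mapping-path-type category---matches that source in outline, and your treatment of the formal axioms and of the pullback axiom is essentially right.

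The gap is in the factorization axiom. Your category $\rmP(F)$ of triples $(A,B,f\colon FA \weq B)$ is indeed a cofibration category, and $A \mapsto (A,FA,\id)$ is a weak equivalence into it; both facts hold because when you factor a morphism \emph{inside} $\rmP(F)$ you may manufacture the middle $\D$-object yourself, e.g.\ by factoring $FA_1 \sqcup_{FA} B \to B'$. But the projection $\pi\colon (A,B,f)\mapsto B$ is not a fibration. Condition (2) hands you a factorization $b = tj$ of $\pi(a,b)$ through a \emph{prescribed} object $B_1$ of $\D$ and demands a lift whose middle object is some $(A_1,B_1,g_1)$: you must produce $A_1 \in \C$ and a weak equivalence $g_1\colon FA_1 \to B_1$ satisfying $g_1 \circ Fa_1 = j\circ g$ and $t\circ g_1 = g'\circ Fa_2$ on the nose. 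The second equation is a strict lift of $g'Fa_2$ through the weak equivalence $t$ relative to the cofibration $Fa_1$; such strict lifts do not exist in a general cofibration category, and in fact the given data furnishes no map from any $FA_1$ into $B_1$ at all, since $B_1$ only receives the cofibration $j$ from $B$. Your proposed remedy---``adjusting so the chosen $\D$-piece matches $j,t$ on the nose''---is precisely the step that cannot be carried out: two factorizations of $b$ are related only by a zigzag of weak equivalences, not by an isomorphism under $B$ and over $B'$. The same obstruction recurs in condition (3). The repair, and the reason the source's middle category is more elaborate than yours, is to enlarge its objects with freely constructible data, e.g.\ cospans of weak equivalences $FA \to C \leftarrow B$: then the lift demanded by condition (2) only needs a map \emph{out of} the prescribed $B_1$ into a new object $C_1$, which can be built by pushout and factorization in $\D$.
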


The definition of the quasicategory of frames (and its enhancement to a complete Segal space) will depend on the notion of a Reedy cofibrant diagram on a direct category. We therefore review the necessary definitions.

\begin{definition}\leavevmode
\begin{enumerate}
 \item A category $J$ is \textbf{direct} if there is a function, called \textbf{degree}, $\deg \colon \ob(J) \to \bbN$ such that for every non-identity map $j \to j'$ in $J$ we have $\deg(j) > \deg(j')$.
\end{enumerate}
Let $J$ be a direct category.
\begin{enumerate}
 \item[(2)] Let $j \in J$. The \textbf{latching category} $\partial(J \downarrow j)$ of $j$ is the full subcategory of the slice category $J \downarrow j$ consisting of all objects except $\id_j$. There is a canonical functor $\partial(J \downarrow j) \to J$, assigning to a morphism (regarded as an object of $\partial(J \downarrow j)$) its domain.
 \item[(3)] Let $X \colon J \to \C$ and $j \in J$. The \textbf{latching object} of $X$ at $j$ is defined as a colimit of the composite
   \[ L_j X := \colim (\partial(J \downarrow j) \longrightarrow J \overset{X}{\longrightarrow} \C).\]
   The canonical morphism $L_j X \to X_j$ is called the \textbf{latching morphism}.
 \item[(4)] Let $\C$ be a cofibration category. A diagram $X \colon J \to \C$ is called \textbf{Reedy cofibrant}, if for all $j \in J$, the latching object $L_j X$ exists and the latching morphism $L_j X \to X_j$ is a cofibration.
 \item[(5)] Let $\C$ be a cofibration category and let $X, Y \colon J \to \C$ be Reedy cofibrant diagrams in $\C$. A morphism $f \colon X \to Y$ of diagrams is a \textbf{Reedy cofibration} if for all $j \in J$ the induced morphism $ X_j \sqcup_{L_j X} L_j Y \to Y_j$ is a cofibration.
\end{enumerate}
\end{definition}

Recall that a \textbf{homotopical category} is a category with weak equivalences satisfying the 2-out-of-6 property. We will denote by $\hoCat$ the full subcategory of $\weCat$ whose objects are homotopical categories. We will restrict our attention to homotopical categories, because the techniques of \cite{szumilo:two-models} are well-adapted for this notion. Given a small homotopical category $J$, we will construct a direct homotopical category $DJ$ (a ``direct approximation'' of $J$), together with a homotopical functor $p \colon DJ \to J$. The objects of $DJ$ are pairs $([m], \varphi \colon [m] \to J)$ with varying $n \in \bbN$. A morphism
  \[ f \colon ([m], \varphi) \to ([n], \psi)  \]
 is an injective, order preserving map $f \colon [m] \into [n]$ making the following triangle commute:
 \begin{ctikzpicture}
   \matrix[diagram]
   {
     |(m)| [m] && |(n)| [n] \\
     & |(J)| J \\
   };

   \draw[->,inj] (m) to node[above] {$f$} (n);

   \draw[->] (m) to node[below left]  {$\phi$} (J);
   \draw[->] (n) to node[below right] {$\psi$} (J);
 \end{ctikzpicture}
 It is clear that $DJ$ is a direct category (with $\deg ([m], \varphi) = m$). To define $p \colon DJ \to J$ we put $p([m],\varphi)=\varphi(m)$. Finally, we declare that a map $w$ in $DJ$ is a weak equivalence if $p(w)$ is a weak equivalence in $J$. This makes $DJ$ into a category with weak equivalences and $p$ into a homotopical functor. 
 
 \begin{definition}
 Let $\C$ be a cofibration category. We define the simplicial set $\Nf\C$, called the \textbf{quasicategory of frames} in $\C$, by setting:
 \[ (\Nf\C)_m :=\left\{ \text{homotopical, Reedy cofibrant diagrams } D[m] \to \C \right\} . \]
\end{definition} 

\begin{theorem}[{\cite[Thm.\ 3.3]{szumilo:two-models}}]
 For any cofibration category $\C$, the simplicial set $\Nf\C$ is a quasicategory and moreover, $\Nf$ is an exact functor from the fibration category of cofibration categories (of \Cref{thm:fib-cat-of-cof-cat}) to the fibration category of quasicategories. \qed
\end{theorem}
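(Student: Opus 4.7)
The two claims---that $\Nf\C$ satisfies inner horn filling, and that $\Nf$ is exact---will be handled separately. Both rest on a single inductive extension principle for Reedy cofibrant homotopical diagrams on direct approximations $D[m]$, applied with varying boundary data.

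For inner horn filling, a map $\horn{m,i} \to \Nf\C$ with $0 < i < m$ corresponds to a homotopical, Reedy cofibrant diagram on the full subcategory $D\horn{m,i} \subseteq D[m]$ spanned by those $([n],\varphi)$ with $\varphi\colon[n]\to[m]$ factoring through the horn. To produce a filler I would extend this diagram to all of $D[m]$ by defining the values on the missing objects one at a time, by induction on degree. At each new $([n],\varphi)$, the latching object $L_{([n],\varphi)}X$ is already defined from the inductive hypothesis, and the factorization axiom of a cofibration category yields $X_{([n],\varphi)}$ together with the desired latching cofibration. The assumption $0 < i < m$ is exactly what guarantees that none of the missing objects is the source of a prior weak equivalence that must be preserved, so each factorization can be chosen freely and the resulting extension is automatically homotopical.

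For exactness, I would verify the four defining conditions. Preservation of the terminal object is immediate, since $\Nf$ of the terminal cofibration category has a unique simplex in each dimension. Preservation of pullbacks along fibrations is in fact an isomorphism $\Nf(\C\times_\D\C')\iso\Nf\C\times_{\Nf\D}\Nf\C'$, because Reedy cofibrancy, latching objects, and homotopicality are all computed componentwise in a pullback of cofibration categories along a fibration. Preservation of fibrations asks that $\Nf P$ be an inner isofibration whenever $P$ is a fibration of cofibration categories; one reruns the inductive extension from part one relatively, using axiom (2) of fibrations of cofibration categories to lift each prescribed factorization downstairs into a factorization in $\C$, and axiom (3) to handle the $\Delta[0]\hookrightarrow E[1]$ lifting via acyclic cofibrations. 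Preservation of acyclic fibrations is obtained by the same induction applied to boundary inclusions $\partial\Delta[m]\hookrightarrow\Delta[m]$, using in addition that an acyclic fibration of cofibration categories reflects weak equivalences and admits suitable acyclic lifts at each stage.

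The principal obstacle is organizing the inductive extension so that at every stage the chosen factorization keeps the diagram simultaneously Reedy cofibrant and homotopical, and, in the relative case, lifts prescribed data in $\D$. The combinatorics of $D[m]$---identifying which morphisms between pairs $([n],\varphi)$ become weak equivalences and which new objects must be filled in what order---is where the argument is genuinely subtle. Once this bookkeeping is in place, each individual extension step is a direct application of the cofibration category axioms or of the axioms defining a fibration between them.
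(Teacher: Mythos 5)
Note first that the paper does not prove this theorem itself --- it is quoted from \cite[Thm.\ 3.3]{szumilo:two-models} --- but the argument is generalized and reproduced in \Cref{prop:proto-to-qcat}, so there is a definite template to compare against.

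Your plan for the horn-filling part has a genuine gap: the claim that ``none of the missing objects is the source of a prior weak equivalence that must be preserved, so each factorization can be chosen freely and the resulting extension is automatically homotopical'' is false, and it is precisely where the real content of the theorem lives. The objects of $D[m]$ missing from $D(\horn{m,i})$ are the $([n],\varphi)$ with $\operatorname{im}\varphi$ equal to $[m]$ or $[m]\setminus\{i\}$; for $0<i<m$ both of these sets contain $m$, so every such object receives a weak equivalence from the vertex $([0],m)$, which lies in the horn. Hence the new value is constrained up to weak equivalence by the old data, and the difficulty is to produce a latching cofibration $L_{([n],\varphi)}X \to X_{([n],\varphi)}$ whose composites with these prescribed maps are weak equivalences --- there is no map to hand to which the factorization axiom can be applied. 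This is exactly where the inner-horn hypothesis does substantive work: for $\Lambda^1[2]$ the missing edge can be filled by the pushout $X_{\{0,1\}}\sqcup_{X_{\{1\}}}X_{\{1,2\}}$ along the acyclic cofibration $X_{\{1\}}\to X_{\{1,2\}}$ (i.e.\ by \emph{composing}), whereas for the outer horn $\Lambda^0[2]$ the same recipe would require a morphism $X_1\to X_2$ that need not exist. Your proposed criterion does not even distinguish inner from outer horns. The correct organization, used in the cited source and in \Cref{prop:proto-to-qcat}, is global rather than cell-by-cell: the restriction $\C^{D[m]}_\R \to \C^{D(\horn{m,i})}_\R$ is a fibration of cofibration categories because the inclusion is a sieve (this part works for all horns), it is a \emph{weak equivalence} for $0<i<m$ (this is the heart of the proof and requires a separate cofinality/approximation argument), and acyclic fibrations of cofibration categories lift against sieves (\Cref{lem:reedy-lifting}); applying that lifting property to $\varnothing \to [0]$ produces the filler.

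The exactness part is closer to the mark: preservation of the terminal object and of pullbacks along fibrations does hold on the nose (cofibrations, weak equivalences and latching objects in a pullback of cofibration categories along a fibration are computed componentwise), and preservation of (acyclic) fibrations does come down to relative lifting arguments using axioms (2) and (3) of fibrations of cofibration categories. But as written these relative extensions inherit the same unresolved difficulty as the absolute case, so the sketch cannot be completed until the homotopicality issue above is addressed.
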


In fact, more is true: for a cofibration category $\C$, the quasicategory $\Nf\C$ can be shown to possess all finite colimits. Moreover, $\Nf$ is a weak equivalence between the fibration category of cofibration categories and the fibration category of finitely cocomplete quasicategories \cite[Thm.\ 2.17 and 4.11]{szumilo:two-models}. Let us also record that by Ken Brown's Lemma, we obtain the following corollary:

\begin{corollary}\label{cor:Nf-preserves-equiv}
 $\Nf$ carries weak equivalences of cofibration categories to categorical equivalences of quasicategories. \qed
\end{corollary}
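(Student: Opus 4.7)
The plan is to invoke Ken Brown's Lemma in its fibration-categorical formulation, applied directly to the functor $\Nf$. All the structural inputs are already available in the excerpt: by Theorem 2.13, cofibration categories with exact functors assemble into a fibration category whose weak equivalences are the weak equivalences of cofibration categories; the category $\qCat$ is a fibration category whose weak equivalences are categorical equivalences; and the preceding theorem establishes that $\Nf$ is exact between these two fibration categories. Exactness only directly guarantees preservation of acyclic fibrations, so the remaining task is to upgrade this to preservation of all weak equivalences, which is precisely what Ken Brown's Lemma accomplishes.

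For the record, the fibration-categorical Ken Brown's Lemma proceeds by a standard factorization argument that I would sketch as follows. Given a weak equivalence $w \colon A \weq B$ in a fibration category $\C$, form the product $A \times B$ as a pullback over the terminal object and factor the graph $(\id_A, w) \colon A \to A \times B$ as a weak equivalence $j \colon A \weq P$ followed by a fibration $p \colon P \fib A \times B$. Composing $p$ with the two projections and applying 2-out-of-3 (using $\pi_A p \circ j = \id_A$ and $\pi_B p \circ j = w$) shows that both $\pi_A p$ and $\pi_B p$ are acyclic fibrations. Any exact functor $F$ preserves these; a final application of 2-out-of-3 in the target then identifies $Fj$, and hence $Fw = F(\pi_B p) \circ Fj$, as a weak equivalence.

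The main obstacle, such as it is, is only to confirm that this version of Ken Brown's Lemma is admissible in the axiomatic framework used here; all of the ingredients needed in its proof (factorizations into weak equivalences followed by fibrations, the 2-out-of-3 property, and the existence of finite products via pullback over the terminal object) are available in any fibration category. Once that is granted, the corollary is immediate from exactness of $\Nf$ together with the identification of weak equivalences in the source and target fibration categories.
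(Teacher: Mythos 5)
Your proposal is correct and matches the paper exactly: the corollary is recorded there as an immediate consequence of Ken Brown's Lemma applied to the exact functor $\Nf$ between the fibration category of cofibration categories (\Cref{thm:fib-cat-of-cof-cat}) and the fibration category of quasicategories. Your sketch of the factorization argument underlying the fibration-categorical Ken Brown's Lemma is the standard one and is valid in this axiomatic setting.
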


One of the goals of the present work is to establish an equivalence between $\Nf$ and other constructions of simplicial localization. For this purpose we introduce the following enhancement of the quasicategory of frames to a complete Segal space.

\begin{definition}\label{def:bold-Nf}
 Given a cofibration category $\C$, we define a bisimplicial set $\bfNf\C$ by:
 \[ (\bfNf\C)_{m,n} :=\left\{\text{homotopical, Reedy cofibrant diagrams } D([m] \times \hat{[n]}) \to \C  \right\}. \]
\end{definition}

\begin{remark}\label{rmk:jt-qcat-to-css}
This definition is inspired by the construction of Joyal and Tierney, assigning to a quasicategory $\mathscr{C}$, a complete Segal space $\J(\mathscr{C}^{\Delta[-]})$ \cite[p.\ 24]{joyal-tierney:qcat-vs-segal}. Unwinding the definitions, one can check that $\bfNf\C$ is given by applying their construction to $\Nf\C$. It follows that $\bfNf\C$ is a complete Segal space for any cofibration category $\C$. 
\end{remark}

Our main result (\Cref{thm:Nf-and-Ncd}) shows that the bisimplicial sets $\Ncd\C$ and $\bfNf\C$ are Rezk equivalent. (We also point out that putting $n=0$, i.e.\ taking the $0$th row, yields $\ucat{(\bfNf\C)}_0 \cong \Nf\C$.) 

In the remainder of this section, we will collect several lemmas needed in the subsequent sections.
We begin, however, with two auxiliary constructions.

Given a poset $P$, define a direct category $\Sd P$ with weak equivalences as the full subcategory of $DP$ whose objects are injective monotone functions $\varphi \colon [n] \into P$, i.e.\ non-empty chains in $P$. The weak equivalences of $\Sd P$ are created by the functor $\max \colon \Sd P \to P$, taking a chain to its maximal element, or, equivalently, by the inclusion $\Sd P \into DP$ (notice that $\max$ is simply the restriction of $p \colon DP \to P$ to the subcategory $\Sd P$).

 Similarly, we may define $D$ for simplicial sets, rather than for categories. Let $K \in \sSet$ and define the underlying category of $DK$ to be the category of elements of $K$, considered as a semisimplicial set (i.e.\ without degeneracy maps). The set of weak equivalences in $DK$ are the smallest set closed under 2-out-of-6 and containing the morphisms induced by the degenerate $1$-simplices of $K$.
 This definition can be extended to simplicial sets with certain extra structure, but we will only need one instance of that, so we will give an ad hoc definition. Namely, let $D\hat{\partial \Delta[n]}$ denote the homotopical category with $D(\partial \Delta[n])$ as its underlying category and all maps as weak equivalences.
 
\begin{proposition}[{\cite[Prop.\ 3.7]{szumilo:two-models}}]
 Let $\C$ be a cofibration category and $K$ a simplicial set. There is a natural bijection between the set of simplicial maps $K \to \Nf\C$ and the set of Reedy cofibrant diagrams $DK \to \C$. \qed
\end{proposition}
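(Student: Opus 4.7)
My approach is to unwind both sides of the purported bijection and identify them simplex by simplex. Write $\hat\sigma \colon \Delta[n] \to K$ for the simplex of $K$ corresponding to $\sigma \in K_n$. Since the $n$-simplices of $\Nf\C$ are by definition homotopical, Reedy cofibrant diagrams $D[n] \to \C$, and $D\Delta[n] = D[n]$, a simplicial map $f \colon K \to \Nf\C$ is, by the Yoneda lemma, the same as a family $\{F_\sigma \colon D[n] \to \C\}_{\sigma \in K_n}$ of such diagrams, one for each $\sigma$, compatible in the sense that $F_{\alpha^{\ast}\sigma} = F_\sigma \circ D\alpha$ for every simplicial operator $\alpha$. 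Such a compatible family is precisely the data of a single functor $F \colon DK \to \C$: set $F([n],\sigma) := F_\sigma([n], \id_{[n]})$ and send the morphism $g \colon ([m], g^{\ast}\sigma) \to ([n], \sigma)$ of $DK$ to $F_\sigma(g)$; well-definedness and functoriality follow from the compatibility condition. Conversely, given $F \colon DK \to \C$, one recovers $F_\sigma$ as the restriction along $D\hat\sigma \colon D[n] \to DK$, and the two constructions are visibly inverse to one another.

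The main step is then to verify that Reedy cofibrancy translates correctly. I expect the key observation to be that for any $([n], \sigma) \in DK$, the functor $D\hat\sigma$ induces an isomorphism between the latching categories $\partial(DK \downarrow ([n], \sigma))$ and $\partial(D[n] \downarrow ([n], \id_{[n]}))$: objects of both correspond bijectively to proper face injections $g \colon [m] \hookrightarrow [n]$ (with any secondary data determined by $g$), and morphisms and composites match on the nose. Under this isomorphism the latching diagram for $F$ at $([n], \sigma)$ agrees with the latching diagram for $F_\sigma$ at its top object, so the two latching morphisms coincide. Moreover, for a general object $([k], \varphi) \in D[n]$, the slice $D[n] \downarrow ([k], \varphi)$ is naturally isomorphic to $D[k] \downarrow ([k], \id_{[k]})$, identifying the latching of $F_\sigma$ at $([k], \varphi)$ with the latching of $F$ at $([k], \varphi^{\ast}\sigma) \in DK$. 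Consequently, Reedy cofibrancy of $F$ at all objects of $DK$ is equivalent to Reedy cofibrancy of each $F_\sigma$ at all objects of $D[n]$.

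Naturality of the bijection in $K$ is immediate from the constructions. The only remaining subtlety I foresee concerns the homotopical condition: each $F_\sigma$ is homotopical by assumption, and in the other direction $F$ must send the generating weak equivalences of $DK$ (those induced by degenerate $1$-simplices of $K$) to weak equivalences in $\C$. For a degenerate edge $s_0 \eta \in K_1$ the associated diagram $F_{s_0\eta} = F_\eta \circ Ds_0 \colon D[1] \to \C$ factors through the one-object category $D[0]$ and is therefore constant, so the generating weak equivalences are sent to identities. I anticipate no further obstacles; the technical heart of the argument is the latching-category isomorphism of the second paragraph, and everything else is a routine unwinding.
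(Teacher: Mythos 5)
The paper offers no proof of this proposition---it is imported verbatim from Szumi{\l}o's paper---so there is nothing in-text to compare against; I will assess your argument on its own terms. Your overall strategy is the natural one: identify a simplicial map $K \to \Nf\C$ with a compatible family $\{F_\sigma\}_{\sigma \in K_n}$, reassemble that family into a single functor on the category of simplices of $K$, and match latching data object by object. The second paragraph is the combinatorial heart and is correct: both $\partial(DK \downarrow ([n],\sigma))$ and $\partial(D[n] \downarrow ([n],\id_{[n]}))$ are the category of proper monomorphisms into $[n]$, and the identification of $D[n] \downarrow ([k],\varphi)$ with the slice over $([k],\varphi^*\sigma)$ in $DK$ correctly reduces Reedy cofibrancy of each $F_\sigma$ at \emph{all} objects of $D[n]$ to Reedy cofibrancy of $F$ at all objects of $DK$.

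The homotopical clause, which you flag as the ``only remaining subtlety,'' is where the write-up actually breaks down, in two places. First, $D[0]$ is not a one-object category: it has one object $([m], [m] \to [0])$ in every dimension $m$, and $F_{s_0\eta} = F_\eta \circ Ds_0$ is not constant. The conclusion survives for a different reason: every morphism of $D[0]$ is a weak equivalence (its image under $p \colon D[0] \to [0]$ is $\id_0$), so the homotopical functor $F_\eta$ sends every morphism of $D[0]$ to a weak equivalence, and hence so does $F_{s_0\eta}$. Second, and more seriously, you verify only one direction of the homotopical correspondence. For the inverse construction $F \mapsto \{F \circ D\hat{\sigma}\}$ to land in $\Nf\C$, each restriction $F \circ D\hat{\sigma}$ must be homotopical, which requires that $D\hat{\sigma} \colon D[n] \to DK$ carry the weak equivalences of $D[n]$ (created by $p \colon D[n] \to [n]$) into $\w(DK)$, which is \emph{defined} only as the 2-out-of-6 closure of the morphisms coming from degenerate $1$-simplices of $K$. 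Since $D\hat{\sigma}$ sends degenerate edges of $\Delta[n]$ to degenerate edges of $K$, what is really needed is that the $p$-created weak equivalences of $D[n] = D\Delta[n]$ are already generated under 2-out-of-6 by the degenerate-edge morphisms of $\Delta[n]$. This is precisely the point at which the 2-out-of-6 closure in the definition of $\w(DK)$ does genuine work; it is not a routine unwinding, and without it your bijection is only shown to be well defined in one direction.
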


The remaining lemmas will establish several properties of the cofibration categories of diagrams.

\begin{proposition}\label{prop:reedy-vs-levelwise}
 Let $\C$ be a cofibration category and $J$ a direct category with weak equivalences and finite latching categories.
 \begin{enumerate}
  \item The category $\C^J_\R$ of homotopical, Reedy cofibrant diagrams $J \to \C$ is a cofibration category, in which: weak equivalences are levelwise weak equivalences and cofibrations are Reedy cofibrations \cite[Thm.\ 9.3.8(1a)]{radulescu-banu}.
  \item The category $\C^J$ of all homotopical diagrams $J \to \C$ is a cofibration category, in which: weak equivalences are levelwise weak equivalences and cofibrations are levelwise cofibrations \cite[Thm.\ 9.3.8(1b)]{radulescu-banu}.
  \item The canonical inclusion $\C^J_\R \into \C^J$ is a weak equivalence of cofibration categories \cite[Prop.\ 1.16(3)]{szumilo:two-models}. \qed
 \end{enumerate}
\end{proposition}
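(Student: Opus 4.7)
The plan is to not reprove this proposition from scratch, since each of the three statements is an instance of a result already available in the literature and cited in the statement itself. I would simply organize the proof as three short invocations of those references, with a sentence each explaining why the hypotheses of the cited results are met in our setting (namely, a cofibration category $\C$ and a direct homotopical category $J$ with finite latching categories).

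For parts (1) and (2), the strategy is to appeal directly to Rădulescu-Banu's Theorem 9.3.8 in \cite{radulescu-banu}. The only cofibration category axioms that require nontrivial work are the existence of pushouts along (Reedy, respectively levelwise) cofibrations and the factorization of any morphism as a cofibration followed by a weak equivalence. For levelwise cofibrations in $\C^J$ both are immediate from the levelwise structure on $\C$. For Reedy cofibrations in $\C^J_\R$ one proceeds by induction on the degree filtration of $J$; at each stage one forms a pushout or a factorization in $\C$ involving the relative latching morphism, which exists because the latching categories are finite (so the latching colimits reduce to iterated pushouts along cofibrations). Homotopicality of the constructed diagram is preserved by these operations.

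For part (3), the inclusion $\C^J_\R \into \C^J$ is exact essentially by definition and one verifies the Approximation Properties of \Cref{thm:approximation-properties}. App1 is trivial because weak equivalences on both sides are levelwise. The substance is App2: given a Reedy cofibrant $A$ and a morphism $A \to Y$ in $\C^J$, one must produce a Reedy cofibration $A \to B$ and a levelwise weak equivalence $Y \weq Z$ together with a weak equivalence $B \weq Z$ in $\C^J$. This is carried out by an inductive Reedy replacement using the factorization axiom of $\C$ stage by stage in the degree filtration of $J$, which is exactly the argument given in \cite[Prop.\ 1.16(3)]{szumilo:two-models}; I would just quote that statement.

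The only place where one should be slightly careful is to confirm that the finite-latching-category hypothesis is indeed the right one for both \cite{radulescu-banu} and \cite{szumilo:two-models}, since cofibration categories in our sense only guarantee finite colimits. Once that bookkeeping is noted, there is no obstacle, and the proposition is a direct citation.
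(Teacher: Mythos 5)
Your proposal matches the paper exactly: the proposition is stated with a \qed and proved purely by citation of \cite[Thm.\ 9.3.8]{radulescu-banu} for parts (1) and (2) and \cite[Prop.\ 1.16(3)]{szumilo:two-models} for part (3), which is precisely what you do. The extra sentences you add sketching the content of the cited arguments (degree-filtration induction, Approximation Properties) and checking the finite-latching-category hypothesis are accurate and harmless, but the paper itself offers no proof beyond the references.
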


\begin{lemma}[{\cite[Lem.\ 3.9]{szumilo:two-models}}] \label{lem:Dm-to-m}
 The map $p \colon D[m] \to [m]$ is a homotopy equivalence and thus induces weak equivalences of cofibration categories of diagrams (both for Reedy and levelwise structures). \qed
\end{lemma}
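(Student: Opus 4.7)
My plan is to construct a homotopy inverse to $p$, exhibit natural zigzags of weak equivalences relating the composites to the identities, and then deduce the second part via the approximation properties of \Cref{thm:approximation-properties}.

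Define a section $s \colon [m] \to D[m]$ by $s(i) = ([i], \iota_i)$, where $\iota_i \colon [i] \into [m]$ is the canonical inclusion of an initial segment, with the evident action on morphisms $i \leq j$ sending it to the inclusion $[i] \into [j]$. Since $[m]$ carries only identity weak equivalences, $s$ is automatically homotopical, and a direct computation gives $ps = \id_{[m]}$.

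The technical heart of the argument is to show $sp \simeq \id_{D[m]}$ through a zigzag of natural weak equivalences. For $X = ([n], \varphi)$, we have $sp(X) = ([\varphi(n)], \iota_{\varphi(n)})$. Pointwise, both $sp(X)$ and $X$ admit canonical morphisms from the ``last-vertex'' object $([0], \varphi(n))$---sending $0$ to $\varphi(n)$ and to $n$ respectively---and both project under $p$ to $\id_{\varphi(n)}$, hence are weak equivalences in $D[m]$. The main obstacle is that the assignment $X \mapsto ([0], \varphi(n))$ is not itself functorial, since an arbitrary morphism $X \to Y$ in $D[m]$ need not preserve last vertices. To remedy this, I would introduce an intermediate functor sending $X$ to a longer chain simultaneously extending the data of $\iota_{\varphi(n)}$ and $\varphi$ in an order-preserving fashion, producing a natural span of weak equivalences that realizes $sp \simeq \id_{D[m]}$.

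For the induced weak equivalences on diagram cofibration categories, consider $p^* \colon \C^{[m]} \to \C^{D[m]}$ with the levelwise structure. This functor is exact, and I would verify the approximation properties of \Cref{thm:approximation-properties}. Property (App1) is immediate, since $(p^* f)_{s(i)} = f_i$, so a levelwise weak equivalence $p^* f$ forces each $f_i$ to be a weak equivalence. For (App2), given $g \colon p^* X \to Z$ in $\C^{D[m]}$, I would transport the natural zigzag of the previous paragraph to the diagram level and then factor pointwise in $\C$ (using cofibration--weak-equivalence factorizations) to produce the required commutative square. The Reedy case is then reduced to the levelwise case by \Cref{prop:reedy-vs-levelwise} together with the 2-out-of-3 property in the fibration category of cofibration categories provided by \Cref{thm:fib-cat-of-cof-cat}.
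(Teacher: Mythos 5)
The paper itself does not prove this lemma---it is quoted from \cite[Lem.\ 3.9]{szumilo:two-models}---so I am comparing your proposal against the cited argument, which it reconstructs in essentially the right way: the section $s(i) = ([i],\iota_i)$ with $ps = \id_{[m]}$ is the correct choice, and the homotopy $sp \simeq \id_{D[m]}$ does have to go through an intermediate functor, exactly as you anticipate. The one place where your write-up stops short of a proof is precisely that intermediate functor, which is the technical heart of the lemma. To close it, define $M([n],\varphi)$ to be the order-preserving merge of the two chains $(0,1,\dots,\varphi(n))$ and $(\varphi(0),\dots,\varphi(n))$, with the convention that at each value of $[m]$ the occurrences coming from the first chain are listed before those coming from the second. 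This convention is what makes $M$ functorial and makes the two evident inclusions $sp(X) \to M(X)$ and $X \to M(X)$ into \emph{natural} transformations (injectivity and order-preservation of the induced maps $M(X) \to M(Y)$ rely on it); both are weak equivalences because all three chains have the same maximum $\varphi(n)$. This yields the span $sp \Rightarrow M \Leftarrow \id$ of natural weak equivalences you are after. Without pinning down such a convention, ``a longer chain simultaneously extending the data'' is ambiguous and functoriality could fail, so this is the step you must actually carry out.

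Two smaller remarks on the second half. For the levelwise statement, verifying the Approximation Properties is correct but unnecessarily roundabout: once $p$ is a homotopy equivalence, $s^*p^* = \id$ and the zigzag above, evaluated on a homotopical diagram, shows that $p^*s^*$ is connected to the identity of $\C^{D[m]}$ by natural weak equivalences; hence $p^*$ and $s^*$ are already inverse equivalences after passing to homotopy categories. (Your (App2) sketch does work---take $B = s^*Z$ and $W = M^*Z$---but note that no cofibration--weak equivalence factorization is needed there.) For the Reedy statement, be aware that $p^*$ does \emph{not} preserve Reedy cofibrant diagrams (already for $n=1$ the latching map of $p^*X$ at a nondegenerate $1$-chain has the form $[Xf,\id]\colon X_a \sqcup X_b \to X_b$, which is essentially never a cofibration), so the Reedy claim must be read, as you do, via the zigzag $\C^{[m]}_\R \into \C^{[m]} \to \C^{D[m]} \hookleftarrow \C^{D[m]}_\R$ using \Cref{prop:reedy-vs-levelwise}(3), not as a statement about $p^*$ restricted to Reedy cofibrant diagrams.
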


\begin{lemma}\label{lem:curry-reedy}
 For a cofibration category $\C$ and direct categories $I$ and $J$, the cofibration categories of diagrams $\C^{I \times J}_\R$ and $(\C^I_\R)^J_\R$ are equivalent.
\end{lemma}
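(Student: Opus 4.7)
The plan is to show that currying induces an isomorphism of cofibration categories $\C^{I\times J}_{\R}\cong(\C^{I}_{\R})^{J}_{\R}$, from which the stated equivalence follows at once. The underlying categorical isomorphism $\C^{I\times J}\cong(\C^{I})^{J}$ is evident; the task is to verify that it identifies the Reedy structures.

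The key technical input is a pushout decomposition of the latching object. For $(i,j)\in I\times J$, the slice $(I\times J)\downarrow(i,j)$ is canonically $(I\downarrow i)\times(J\downarrow j)$, and removing only the terminal object $(\id_{i},\id_{j})$ decomposes its boundary as
\[ \partial\bigl((I\times J)\downarrow(i,j)\bigr)\;=\;\bigl(\partial(I\downarrow i)\times(J\downarrow j)\bigr)\;\cup_{\partial(I\downarrow i)\,\times\,\partial(J\downarrow j)}\;\bigl((I\downarrow i)\times\partial(J\downarrow j)\bigr), \]
reflecting that a pair of morphisms is non-identity precisely if one of the components is. Because each full slice has a terminal object, the Fubini theorem collapses the induced colimits to give
\[ L_{(i,j)}X\;\cong\;L_{i}X(-,j)\;\sqcup_{L_{i}(L_{j}\hat{X})}\;(L_{j}\hat{X})(i), \]
where $\hat{X}\colon J\to\C^{I}$ denotes the curry of $X$. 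Under this identification, the latching morphism $L_{(i,j)}X\to X(i,j)$ agrees with the pushout-corner map that appears in the Reedy cofibration condition for $L_{j}\hat{X}\to\hat{X}(j)$ in $\C^{I}$ at the object $i$.

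Proceeding by induction on $\deg(j)$, this yields the equivalence: $X$ is Reedy cofibrant on $I\times J$ if and only if $\hat{X}$ factors through $\C^{I}_{\R}$ and is Reedy cofibrant as a $J$-diagram there. The forward direction requires knowing that each $\hat{X}(j)$ is itself Reedy cofibrant in $\C^{I}$; this follows inductively because $L_{j}\hat{X}$ is built from the Reedy cofibrant diagrams $\hat{X}(j')$ of strictly smaller degree by iterated pushouts along Reedy cofibrations inside the cofibration category $\C^{I}_{\R}$ of \Cref{prop:reedy-vs-levelwise}(1), and hence itself lies in $\C^{I}_{\R}$, so that $L_{i}(L_{j}\hat{X})\to(L_{j}\hat{X})(i)$ is a cofibration. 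The same pushout analysis applied to the relative latching of a map of diagrams shows that Reedy cofibrations correspond.

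The remaining exact-functor data are preserved by currying in an obvious way: levelwise weak equivalences match, pushouts along (Reedy) cofibrations are computed pointwise on both sides, and the initial object is the constant diagram at $0\in\C$ in either case. The main obstacle is the pushout decomposition of the latching category together with the induction needed to extract the two iterated conditions from the product Reedy condition; once those are in place, the rest is routine bookkeeping.
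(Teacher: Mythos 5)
Your proposal is correct and follows essentially the same route as the paper: the paper's entire proof is to invoke the Leibniz formula for latching objects of a product Reedy category (citing Riehl--Verity, Ex.\ 4.6) to identify the Reedy cofibrations on both sides of the currying isomorphism. You simply prove that formula by hand -- the pushout decomposition of $\partial\bigl((I\times J)\downarrow(i,j)\bigr)$ and the resulting identification of the latching map with the pushout-corner map is exactly the content of the cited result -- and your additional inductive care that $L_j\hat{X}$ lands in $\C^I_\R$ is a legitimate detail the citation absorbs.
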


\begin{proof}
 The latching categories satisfy the Leibniz formula \cite[Ex.\ 4.6]{riehl-verity:theory-and-practice} and thus a morphism of $\C^{I \times J}_\R$ is a cofibration if and only if the corresponding morphism of $(\C^I_\R)^J_\R$ is.
\end{proof}

Recall that a map $I \to J$ of small categories is a \textbf{sieve} if it is injective on objects, fully faithful, and if $j \to i$ is a morphism of $J$ such that $i \in I$, then $j \in I$. 

\begin{lemma}\label{lem:reedy-lifting}
 Every acyclic fibration $P \colon \C \to \D$ of cofibration categories has the Reedy right lifting property with respect to sieves between direct categories with weak equivalences with finite latching objects, i.e.\ every square of the form:
 \begin{ctikzpicture}
   \matrix[diagram]
   {
     |(I)| I & |(C)| \cat{C} \\
     |(J)| J & |(D)| \cat{D} \\
   };

   \draw[->] (I) to (C);
   \draw[->] (J) to (D);

   \draw[->,inj] (I) to (J);
   \draw[->,fib] (C) to node[right] {$P$} (D);
 \end{ctikzpicture}
in which  $I \to J$ is a sieve of direct categories with weak equivalences with finite latching categories and the horizontal arrows are Reedy cofibrant, admits a diagonal filler $J \to \C$, which is Reedy cofibrant.  
\end{lemma}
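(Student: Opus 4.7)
The plan is to proceed by transfinite induction on the objects of $J\setminus I$, adjoining them one at a time in an order compatible with the direct structure of $J$. Because $I\to J$ is a sieve and each latching category in $J$ is finite, the objects of $J\setminus I$ can be well-ordered so that each time a new object $j$ is adjoined, every object of its latching category $\bd(J\downarrow j)$ has already been adjoined or lies in $I$. At limit stages of the induction I take the evident union of the previously defined sub-diagrams; Reedy cofibrancy and compatibility with $P$ are pointwise conditions, hence preserved.

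At the successor step, suppose $\tilde X$ has been defined as a Reedy cofibrant lift of $Y$ over a sub-sieve $I\subseteq I'\subsetneq J$ with $\tilde X|_I = X$, and let $j\in J\setminus I'$ be the next object. Because $P$ is exact, the latching colimit commutes with $P$, so $PL_j\tilde X=L_jY$. To extend $\tilde X$ to $I'\cup\{j\}$ I must produce an object $\tilde X_j\in\C$ with $P\tilde X_j=Y_j$ together with a cofibration $L_j\tilde X\ito\tilde X_j$ in $\C$ whose image under $P$ is precisely the Reedy latching cofibration $L_jY\ito Y_j$ of $Y$ in $\D$. Such a cofibration automatically serves as the latching morphism of the extended diagram at $j$, preserving Reedy cofibrancy, and yields $P\tilde X = Y$ on $I'\cup\{j\}$.

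The heart of the argument is this strict lifting step: lifting a cofibration $u\colon L_jY\ito Y_j$ of $\D$ along the acyclic fibration $P$ with prescribed source $L_j\tilde X$. The idea is to combine the Approximation Properties of \Cref{thm:approximation-properties}, available because $P$ is a weak equivalence, with axiom~(2) of the definition of a fibration, which lifts factorizations from $\D$ to $\C$. Concretely, App2 first produces a \emph{homotopy} lift, a morphism $L_j\tilde X\to B$ in $\C$ whose image in $\D$ agrees with $u$ only up to a zigzag of weak equivalences. Factoring this morphism in $\C$ as a cofibration followed by a weak equivalence and then applying axiom~(2) to a suitable cof-we factorization of its image in $\D$ rectifies the homotopy lift to a strict one. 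The main obstacle is precisely this rectification: arranging the lift so that the resulting cofibration maps under $P$ not merely to something weakly equivalent to $u$, but strictly to $u$ itself.
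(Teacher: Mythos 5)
Your induction scheme is the right one and matches the structure of the argument the paper outsources to \cite[Lem.\ 1.24]{szumilo:two-models}: well-order the objects of $J \setminus I$ compatibly with degree (within a fixed degree there are no non-identity maps, so the intermediate subcategories remain sieves), take unions at limit stages, and reduce the successor step to extending over a single object $j$, which amounts to producing a cofibration $L_j\tilde X \rightarrowtail \tilde X_j$ in $\C$ lying \emph{strictly} over $L_j Y \rightarrowtail Y_j$. The observation that $PL_j\tilde X = L_jY$ because exact functors preserve the (finite, iterated-pushout) latching colimits of Reedy cofibrant diagrams is also correct, and homotopicality of the lift comes for free from (App1).

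However, the heart of the lemma is exactly the one-object lifting step, and your proposal does not prove it --- your last sentence essentially concedes this. More importantly, the mechanism you propose is not adequate: axiom~(2) of the definition of a fibration only lifts a factorization of $Pf$ for a morphism $f$ \emph{already present} in $\C$; it gives you no control over the codomain, so it cannot force the lifted cofibration to land on the prescribed object $Y_j$ and the prescribed map $u$. The tools actually needed for the rectification are axiom~(3) (lifting a commutative square in $\D$ whose right-hand leg is an acyclic cofibration, which is precisely the device for replacing a lift that agrees with $u$ only up to weak equivalence by one that agrees with a cofibrant approximation of $u$ on the nose) together with axiom~(1) (the isofibration property, to correct the remaining discrepancy, which is an isomorphism over $\D$). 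One also needs to first arrange, via a factorization in $\C$ and (App2) of \Cref{thm:approximation-properties}, that the approximate lift is itself a cofibration before axiom~(3) applies. Since you invoke neither axiom~(1) nor axiom~(3), and the step you defer is the entire content of the cited implication, the proof as written has a genuine gap.
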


\begin{proof}
 Implication (1) $\Rightarrow$ (2) in \cite[Lem.\ 1.24]{szumilo:two-models}.
\end{proof}

Let us point out that not every functor $f \colon I \to J$ between direct categories induces an exact functor between the corresponding categories of Reedy cofibrant diagrams. The following lemma gives a useful criterion for checking the exactness.

\begin{lemma} \label{lem:exactness-criterion}
 Let $f \colon I \to J$ be a functor between direct categories such that for each $i \in I$, the canonical map $\partial (I \downarrow i) \to \partial (J \downarrow f(i))$ factors as the composite of a cofinal functor followed by
a sieve
\begin{ctikzpicture}
  \matrix[diagram]
  {
    |(I)| \bd(I \slice i) & |(K)| K & |(J)| \bd(J \slice f(i)) \text{.} \\
  };

  \draw[->] (I) to (K);
  \draw[->] (K) to (J);
\end{ctikzpicture}
 Then, for any cofibration category $\C$, the induced functor $f^* \colon \C^J_\R \to \C^I_\R$ is exact.
\end{lemma}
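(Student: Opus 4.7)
The plan is to verify the four clauses of exactness in turn, with the cofibration-preservation clauses carrying the full weight of the hypothesis. Preservation of the initial object is immediate, since this object is the constant diagram at $0 \in \C$ in both $\C^J_\R$ and $\C^I_\R$, and restriction along $f$ sends constants to constants. Weak equivalences in either category are defined levelwise and hence are plainly preserved by $f^*$. Pushouts along Reedy cofibrations are likewise computed levelwise (by \Cref{prop:reedy-vs-levelwise}), so once $f^*$ is known to preserve Reedy cofibrations their preservation follows for free. What remains is to show that $f^*X$ is Reedy cofibrant whenever $X$ is, and that $f^*\phi$ is an (acyclic) Reedy cofibration whenever $\phi$ is.

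Fix $i \in I$, let $\alpha \colon \bd(I \slice i) \to K$ be the cofinal functor and $\beta \colon K \to \bd(J \slice f(i))$ the sieve provided by the hypothesis, and let $\iota \colon \bd(J \slice f(i)) \to J$ denote the domain functor. Since $\iota\beta\alpha \colon \bd(I \slice i) \to J$ coincides with $f$ precomposed with the domain functor $\bd(I \slice i) \to I$, cofinality of $\alpha$ supplies a canonical isomorphism $L_i(f^*X) \cong \colim_K X\iota\beta$. Because $X$ is Reedy cofibrant, so is $X\iota$ on $\bd(J \slice f(i))$, and since $\beta$ is a sieve, its further restriction $X\iota\beta$ is Reedy cofibrant on $K$. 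Attaching the objects of $\bd(J \slice f(i)) \setminus K$ to $K$ in order of Reedy degree exhibits the canonical map $\colim_K X\iota\beta \to \colim_{\bd(J \slice f(i))} X\iota = L_{f(i)} X$ as a transfinite composite of pushouts of latching cofibrations of $X\iota$, and is hence a cofibration. Composing with the latching cofibration $L_{f(i)} X \to X_{f(i)} = (f^*X)_i$ shows that the latching morphism of $f^*X$ at $i$ is a cofibration, so $f^*X$ is Reedy cofibrant.

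For preservation of Reedy cofibrations, apply the same cofinal-plus-sieve analysis to both $X$ and $Y$ and abbreviate $A = \colim_K X\iota\beta$, $B = L_{f(i)} X$, $C = \colim_K Y\iota\beta$, $D = L_{f(i)} Y$. The same transfinite attaching procedure, applied now to the Reedy cofibration $\phi\iota$, shows that the Leibniz map $B \sqcup_A C \to D$ is an (acyclic) cofibration. The relative latching morphism of $f^*\phi$ at $i$ then factors as
\[
X_{f(i)} \sqcup_A C \ \to \ X_{f(i)} \sqcup_B D \ \to \ Y_{f(i)},
\]
where the first arrow is the pushout of $B \sqcup_A C \to D$ along the structure map $B \sqcup_A C \to X_{f(i)} \sqcup_A C$ and the second is the relative latching morphism of $\phi$ at $f(i)$; both are (acyclic) cofibrations, hence so is their composite. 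The main technical hurdle is the Leibniz-along-a-sieve assertion that $B \sqcup_A C \to D$ is a cofibration: it requires the careful transfinite induction over $\bd(J \slice f(i)) \setminus K$ sketched above, together with checking that the successive attaching squares really do present this map as an iterated pushout of relative latching maps of $\phi\iota$. Once this step is in place, the remaining bookkeeping falls out mechanically.
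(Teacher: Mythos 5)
Your proof is correct and takes essentially the same route as the paper's: the identical three-step factorization of the latching map of $f^*X$ through $\colim_K$ (an isomorphism by cofinality, then a cofibration coming from the sieve, then the latching cofibration of $X$ at $f(i)$). The only difference is one of detail: the paper cites R{\u a}dulescu-Banu (Thm.\ 9.4.1(1a)) for the middle ``attach cells along the sieve'' step that you prove by hand, and it dismisses the preservation of Reedy cofibrations with ``a similar argument,'' which you correctly spell out via the Leibniz factorization $X_{f(i)} \sqcup_A C \to X_{f(i)} \sqcup_B D \to Y_{f(i)}$.
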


\begin{proof}
 Consider a Reedy cofibrant diagram $X \in \C^J_\R$ and $i \in I$. We need to show that the latching map $L_i f^*X \to (f^*X)_i$ is a cofibration. It factors as:
 \[ L_i f^*X = \colim_{\partial(I \downarrow i)} f^*X \to \colim_{K} X \to \colim_{\partial(J \downarrow f(i))} X \to X_{f(i)} = (f^*X)_i \]
 The first of these arrows is an isomorphism by the cofinality assumption; the second is a cofibration, by \cite[Thm.\ 9.4.1.(1a)]{radulescu-banu}; and the third is a cofibration since $X$ was assumed to be Reedy cofibrant.
 
 A similar argument shows that $f^*$ preserves cofibrations.
\end{proof}

The remaining two lemmas contain technical results on diagrams in cofibration categories.

\begin{lemma}\label{lem:equiv-cofinal}
  Let $f \from I \to J$ be a homotopical functor
  between finite homotopical direct categories and $\cat{C}$ a cofibration category.
  If $f$ induces a weak equivalence $\cat{C}^J_\Reedy \to \cat{C}^I_\Reedy$,
  then for every homotopical Reedy cofibrant diagram $X \from J \to \cat{C}$
  the induced morphism $\colim_I f^* X \to \colim_J X$ is a weak equivalence.
\end{lemma}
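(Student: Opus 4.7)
The plan is to show that $\Ho\eta_X$ is an isomorphism in $\Ho\C$, exploiting that $\Ho f^*$ is an equivalence together with adjunctions between colimit and constant-diagram functors.

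First, I would verify that $\colim_J\colon \C^J_\Reedy \to \C$ and $\colim_I\colon \C^I_\Reedy \to \C$ are exact: colimits of Reedy cofibrant diagrams over finite direct categories are iterated pushouts along cofibrations, which preserves all of the cofibration-category structure. Dually, the constant-diagram functors $\Delta_J\colon \C \to \C^J$ and $\Delta_I\colon \C \to \C^I$ are exact for the levelwise structures of \Cref{prop:reedy-vs-levelwise}(2), and \Cref{prop:reedy-vs-levelwise}(3) identifies $\Ho\C^J_\Reedy$ with $\Ho\C^J$ (and analogously for $I$). Hence all four functors descend to the relevant homotopy categories.

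Next, I would descend the strict 1-categorical adjunctions $\colim_J \dashv \Delta_J$ and $\colim_I \dashv \Delta_I$ to $\Ho\colim_J \dashv \Ho\Delta_J$ and $\Ho\colim_I \dashv \Ho\Delta_I$. Since $\Delta_I = f^*\Delta_J$ on the nose and $\Ho f^*$ is fully faithful (as it is an equivalence by hypothesis), stringing together these adjunctions with the full faithfulness yields for every $A \in \C$ a chain of natural bijections
\[
 \Hom_{\Ho\C}(\colim_I f^*X,\, A) \cong \Hom_{\Ho\C^I}(f^*X,\, \Delta_I A) \cong \Hom_{\Ho\C^J}(X,\, \Delta_J A) \cong \Hom_{\Ho\C}(\colim_J X,\, A).
\]
A direct check, tracing $\id_{\colim_J X}$ backwards through the chain to the universal cocone $X \to \Delta_J \colim_J X$, to its image under $\Ho f^*$, and finally to its adjunct, identifies this natural isomorphism with precomposition by $\Ho\eta_X$. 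By the Yoneda lemma, $\Ho\eta_X$ is an isomorphism, hence $\eta_X$ is a weak equivalence in $\C$.

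The main obstacle is rigorously justifying the homotopy-categorical adjunction $\Ho\colim_J \dashv \Ho\Delta_J$ in the cofibration-category setting, since no full model structure is at hand. This requires carefully tracking the unit and counit of the strict 1-categorical adjunction through the equivalence of \Cref{prop:reedy-vs-levelwise}(3) (used to interpret $\Delta_J A$ as an object of $\Ho\C^J_\Reedy$), together with Ken Brown's lemma applied to the exact functors to ensure that the triangle identities descend to the localizations.
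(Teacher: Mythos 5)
Your argument is correct in outline, but it takes a genuinely different route from the paper's. The paper works with the left Kan extension $\Lan_f \colon \C^I_\R \to \C^J_\R$ along $f$ itself: this functor exists and is exact by \cite[Thm.\ 9.4.3(1)]{radulescu-banu}, is left adjoint to $f^*$, and since both adjoints are exact functors between cofibration categories (hence preserve all weak equivalences, all objects being cofibrant), the adjunction descends to homotopy categories with no fibrant/cofibrant replacement; as $\Ho f^*$ is an equivalence, the counit $\Lan_f f^*X \to X$ is an isomorphism in $\Ho\C^J$, hence a weak equivalence, and applying the exact functor $\colim_J$ together with $\colim_J \Lan_f \cong \colim_I$ finishes the proof. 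Your Yoneda-style argument via the two adjunctions $\colim \dashv \Delta$ replaces this single step with the derived adjunction $\Ho\colim_J \dashv \Ho\Delta_J$, which is exactly where the extra cost lies: $\Delta_J A$ is not Reedy cofibrant in general, so you must pass through \Cref{prop:reedy-vs-levelwise}(3) and verify the triangle identities after replacement --- you correctly identify this as the main obstacle, and it is a true (if somewhat laborious) fact, also available in Radulescu-Banu. Two smaller points: your use of full faithfulness of $\Ho f^*$ on all of $\Ho\C^J$ (not just the Reedy cofibrant part) needs the commuting square with the inclusions of \Cref{prop:reedy-vs-levelwise}(3) and 2-out-of-3 for equivalences of categories, which is fine but worth saying; and your final step ``$\Ho\eta_X$ is an isomorphism, hence $\eta_X$ is a weak equivalence'' silently invokes saturation of weak equivalences in a cofibration category (a consequence of 2-out-of-6, due to Cisinski) --- the paper's proof relies on the same fact, so this is not a defect so much as a step that deserves a citation. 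Net comparison: the paper's route is shorter because the $\Lan_f \dashv f^*$ adjunction lives entirely among Reedy cofibrant diagrams, while yours avoids Kan extensions along $f$ at the price of deriving the colimit/constant adjunction.
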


\begin{proof}
  The left Kan extension functor
  $\Lan_f \from \cat{C}^I_\Reedy \to \cat{C}^J_\Reedy$ exists,
  is exact by \cite[Thm.\ 9.4.3(1)]{radulescu-banu} and is a left adjoint of $f^*$.
  Hence $\Lan_f$ is a weak equivalence since $f^*$ is.
  In particular, the counit $\Lan_f f^* X \to X$ is a weak equivalence
  and hence so is the resulting morphism $\colim_J \Lan_f f^* X \to \colim_J X$
  which coincides with the morphism $\colim_I f^* X \to \colim_J X$.
\end{proof}

\begin{lemma}[{\cite[Lem.\ 1.19(i)]{szumilo:two-models}}] \label{lem:extend-reedy}
 Let $I \into J$ be a sieve and let $X \colon J \to \C$ be a diagram whose restriction $X|I$ is Reedy cofibrant. Then there exists a Reedy cofibrant diagram $\tilde{X} \colon J \to \C$ together with a weak equivalence $\tilde{X} \to X$ whose restriction to $I$ is the identity map (thus, in particular, we have $\tilde{X}|I = X|I$). \qed
\end{lemma}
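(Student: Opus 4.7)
The plan is to construct $\tilde X$ by induction on the degree of the indexing object in $J$, building both the Reedy cofibrant diagram and its comparison weak equivalence $\tilde X \to X$ simultaneously. At stage $n$ the inductive hypothesis will be that a Reedy cofibrant diagram $\tilde X$ has been defined on the full subcategory $J_{<n}$ of objects of degree strictly less than $n$, together with a levelwise weak equivalence $\tilde X \to X|_{J_{<n}}$ restricting to the identity on $I \cap J_{<n}$.

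To extend to an object $j$ of degree $n$, first form the latching object $L_j \tilde X$ (the colimit over $\bd(J \slice j)$, whose objects all lie in $J_{<n}$) and use the inductive weak equivalence to obtain a comparison morphism $L_j \tilde X \to L_j X$. The sieve hypothesis then provides a clean case split. If $j \in I$, every non-identity arrow $j' \to j$ in $J$ forces $j' \in I$ by the sieve condition, so $\bd(J \slice j)$ lies entirely in $I$ and $L_j \tilde X = L_j X$; I would set $\tilde X_j := X_j$ with identity as its comparison, the required latching cofibration being inherited from $X|_I$. If $j \notin I$, I would factor the composite $L_j \tilde X \to L_j X \to X_j$ as
\[
L_j \tilde X \into \tilde X_j \we X_j
\]
using the cofibration category factorization axiom, and declare $\tilde X_j$ to be the intermediate object. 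In either case, the diagram action $\tilde X_{j'} \to \tilde X_j$ for each $j' \to j$ is defined as the composite $\tilde X_{j'} \to L_j \tilde X \to \tilde X_j$, and is functorial in $\bd(J \slice j)$ by universality of the latching colimit.

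The main technical point to verify is that these local choices assemble into a genuine natural transformation $\tilde X \to X$ on all of $J$; this reduces to naturality of the latching cocones, which follows from the inductive hypothesis together with the fact that the factorization axiom is applied in such a way that the chosen weak equivalence $\tilde X_j \we X_j$ is compatible with the incoming structure maps. Existence of the latching colimits will use finiteness of the latching categories (implicit in our setting) together with iterated pushouts along cofibrations, both guaranteed by the cofibration category axioms. The resulting $\tilde X$ is then Reedy cofibrant, with $\tilde X \to X$ a levelwise weak equivalence restricting to the identity on $I$, as required.
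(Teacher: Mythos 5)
The paper offers no proof of its own here---it cites \cite[Lem.\ 1.19(i)]{szumilo:two-models}, whose argument is precisely your degree-wise induction: take the identity over $I$ (where the sieve condition guarantees the whole latching category $\bd(J \slice j)$ already lies in $I$, so Reedy cofibrancy is inherited from $X|I$), and factor the canonical latching map as a cofibration followed by a weak equivalence elsewhere, with naturality of $\tilde{X} \to X$ falling out of the factorization as you indicate. Your proposal is correct and follows the same route; the one cosmetic repair is that for $j \notin I$ the object $L_j X$ need not exist (since $X$ is not assumed Reedy cofibrant off $I$), so the map you factor should be defined directly as the morphism $L_j \tilde{X} \to X_j$ induced by the cocone $\tilde{X}_{j'} \to X_{j'} \to X_j$ rather than as a composite through $L_j X$.
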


\section{Compatibility with categories of diagrams}\label{sec:diagrams}

The goal of this section is to show that for any cofibration category $\C$ and any $k \in \bbN$, the quasicategories $\Nf(\C^{D[k]}_\R)$ and $(\Nf\C)^{\Delta[k]}$ are equivalent (\Cref{thm:compatibility-Dm-Delta-m}). We will introduce a technical notion of an adequate cosimplicial object (\Cref{def:protoframe}), which abstracts the properties of the functor $D$ that ensure that $\Nf\C$ is a quasicategory for any cofibration category $\C$. Indeed, every adequate cosimplicial object yields a functor from the category of cofibration categories to the category of quasicategories (\Cref{prop:proto-to-qcat}) and also to the category of complete Segal spaces (\Cref{thm:JNf-CSS}). We point out that the latter is different than the former followed by the construction of \Cref{rmk:jt-qcat-to-css} and in fact, the key step in the proof is a comparison between the two in a relevant special case. 

We begin, however, by introducing two adequate cosimplicial objects $D[k] \times D[-]$ and $D([k] \times [-])$; and verifying that they are equivalent in the sense of \Cref{prop:DmxDn-Dmxn}.

\begin{definition} \label{def:protoframe}
 A cosimplicial object $A \colon \Delta \to \hoCat$ is \textbf{adequate} if:
 \begin{enumerate}
  \item $A[m]$ is direct for all $[m] \in \Delta$, and for every cofibration category $\C$ and every simplicial operator $[m] \to [m']$, the induced functor $\C^{A[m']}_\R \to \C^{A[m]}_\R$ is exact;
  \item the latching map $\partial A[m] \into A[m]$ is a sieve for all $[m] \in \Delta$;
  \item for all cofibration categories $\C$ and all natural numbers $0 < i < m$, the functor $\C^{A[m]}_\R \to \C^{A(\horn{m,i})}_\R$ is an equivalence of cofibration 
categories (here, we write $A(\horn{m,i})$ for the left Kan extension of $A$ along the Yoneda embedding);
  \item for all cofibration categories $\C$, the map $\C^{A(E[1])}_\R \to \C^{A[0]}_\R$ is an equivalence of cofibration categories.
 \end{enumerate}
\end{definition}

\begin{lemma}
 The cosimplicial object $D \colon \Delta \to \hoCat$ is adequate.
\end{lemma}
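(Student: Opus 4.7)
Conditions (1) and (2) can be verified by direct inspection. For (1), $D[m]$ is direct with degree function $\deg([n], \varphi) = n$. To see that a simplicial operator $\alpha\colon [m]\to[m']$ induces an exact functor $\alpha^*\colon\C^{D[m']}_\R\to\C^{D[m]}_\R$, the plan is to apply \Cref{lem:exactness-criterion}: for any $x = ([n], \varphi) \in D[m]$ the latching category $\bd(D[m]\slice x)$ is canonically identified with $D(\bdsimp{n})$ (the category of non-identity injections into $[n]$), independently of $\varphi$ and of $[m]$, and the induced functor to $\bd(D[m']\slice \alpha_*x)$ is the identity under this identification, so the factorization hypothesis of \Cref{lem:exactness-criterion} is trivially satisfied. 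For (2), unwinding the Kan extension shows $\bd D[m] = D(\bdsimp{m})$, the full subcategory of $D[m]$ on pairs $([n], \varphi)$ with $\varphi$ non-surjective; this inclusion is a sieve because precomposing with an injection cannot enlarge the image of $\varphi$.

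Conditions (3) and (4) assert that restriction along a sieve inclusion---$D\horn{m,i}\into D[m]$ in case (3) and $D[0]\into DE[1]$ in case (4)---is a weak equivalence of cofibration categories. In each case the restriction functor is exact by the same application of \Cref{lem:exactness-criterion}, so the plan is to invoke \Cref{thm:approximation-properties} and verify the two Approximation Properties. For \emph{App1}, weak equivalences are levelwise in both cofibration categories, and one observes that every object in the complement of the sub-sieve is linked to an object of the sub-sieve by weak equivalences in the ambient direct category: in case (3), any $([n], \varphi) \in D[m]\setminus D\horn{m,i}$ has image either $[m]$ or $[m]\setminus\{i\}$, so a section of $\varphi$ exhibits either $([m], \id)$ or $([m-1], d^i)$ as mapping to $([n], \varphi)$ by a weak equivalence of $D[m]$, and these two distinguished objects are themselves connected to $([m-1], d^0)\in D\horn{m,i}$ by weak equivalences of $D[m]$---here the hypothesis $0<i<m$ is used to guarantee that $d^0$ factors through $\horn{m,i}$. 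Combining this with homotopicality of source and target and 2-out-of-3 yields \emph{App1}. Case (4) is analogous and simpler, as every morphism in $DE[1]$ is already a weak equivalence.

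For \emph{App2}, given a homotopical Reedy cofibrant $X\colon D[m] \to \C$ and a morphism $f\colon X|D\horn{m,i}\to Y$ in $\C^{D\horn{m,i}}_\R$, I would factor $f$ as a Reedy cofibration $j\colon X|D\horn{m,i}\into Y'$ followed by a weak equivalence $Y'\weto Y$, and then extend $j$ to a Reedy cofibration $X\into B$ over all of $D[m]$ by applying \Cref{lem:extend-reedy} to the diagram obtained by gluing $Y'$ onto $X$ along the sieve. Homotopicality of $B$ at the objects of $D[m]\setminus D\horn{m,i}$ is forced by the zigzags of weak equivalences identified for \emph{App1}. The main obstacle I expect is precisely this last point: ensuring that the extension is simultaneously Reedy cofibrant and homotopical, which is why the explicit identification of weak equivalences of $D[m]$ (and of $DE[1]$ for case (4)) carried out in the \emph{App1} step is indispensable.
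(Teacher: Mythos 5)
Your argument diverges from the paper's, which disposes of all four conditions by citing \cite[Lem.~3.1, Prop.~3.7, Prop.~3.12, Lem.~3.13]{szumilo:two-models}; you instead attempt a direct verification. Your treatment of conditions (1) and (2) is correct (the latching category of $([n],\varphi)$ in $D[m]$ really is the poset of proper nonempty subsets of $[n]$, independently of $\varphi$, so \Cref{lem:exactness-criterion} applies trivially, and the sieve claim is immediate), and your analysis of the weak equivalences of $D[m]$ for the (App1) half of conditions (3) and (4) is also sound. But the (App2) half contains a genuine gap, and it is exactly where the real content of conditions (3) and (4) lies.

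The problem is the claim that homotopicality of the extension $B$ ``is forced by the zigzags of weak equivalences.'' The zigzags show that \emph{any homotopical} diagram is determined up to weak equivalence on $D[m]\setminus D\horn{m,i}$ by its restriction to the horn; they do not show that the diagram you construct \emph{is} homotopical. Concretely, take $m=2$, $i=1$ and the object $e=([1],\langle 0,2\rangle)$ of $D[2]$. Gluing $Y'$ onto $X$ along the sieve (i.e.\ pushing out along the left Kan extension of $\rho X \to Y'$) yields $B_{e} = X_e \sqcup_{X_0 \sqcup X_2} (Y'_0 \sqcup Y'_2)$, and the image of the weak equivalence $([0],2)\to e$ of $D[2]$ is the map $Y'_2 \to B_e$, which is the cobase change of $X_2 \to X_e \sqcup_{X_0} Y'_0$; this fails to be a weak equivalence whenever the cofibration $X_0 \to Y'_0$ is not acyclic, which is the generic case since $f$ was arbitrary. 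So the glued diagram is not homotopical, and \Cref{lem:extend-reedy} cannot repair this: it only produces a levelwise weak equivalence onto the given diagram, and any diagram levelwise weakly equivalent to a non-homotopical one is itself non-homotopical. Producing a homotopical Reedy cofibrant extension of $Y'$ under $X$ is precisely the delicate point that \cite[Prop.~3.12]{szumilo:two-models} addresses by a different argument. A further, smaller issue: condition (4) concerns $D(E[1])$, which is an infinite direct category (since $E[1]$ has nondegenerate simplices in every dimension), so the extension problem there requires an additional inductive construction over skeleta and is not merely ``analogous and simpler'' to the horn case.
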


\begin{proof}
Condition (1) follows from \cite[Lem.\ 3.1]{szumilo:two-models}. By the proof of \cite[Prop.\ 3.7]{szumilo:two-models}, $DK$ as defined in \Cref{sec:fibcat} is the left Kan extension of $D \colon \Delta \to \hoCat$ along the Yoneda embedding. Thus (2) follows, (3) follows by the proof of \cite[Prop.\ 3.12]{szumilo:two-models}, and (4) follows by \cite[Lem.\ 3.13]{szumilo:two-models}.
\end{proof}

\begin{lemma}\label{lem:DkxD-protoframe}
 For any $k \in \bbN$, the cosimplicial object $D[k] \times D[-] \colon \Delta \to \hoCat$ is a adequate.
\end{lemma}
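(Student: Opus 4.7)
The plan is to verify the four adequacy conditions of \Cref{def:protoframe} for $A[n] := D[k] \times D[n]$ by reducing each to the corresponding condition already established for $D$ in the previous lemma. The key preliminary step will be to identify the left Kan extension $A(K)$, for any simplicial set $K$, with $D[k] \times DK$; this will follow because $D[k] \times -$ preserves colimits in $\Cat$ (as $\Cat$ is cartesian closed), together with a direct check that the generators of weak equivalences match on both sides.

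For condition (1), each $A[n] = D[k] \times D[n]$ is direct as a product of direct categories. For any simplicial operator $[n] \to [n']$, \Cref{lem:curry-reedy} identifies the restriction $\C^{D[k] \times D[n']}_\R \to \C^{D[k] \times D[n]}_\R$ with $(\C^{D[k]}_\R)^{D[n']}_\R \to (\C^{D[k]}_\R)^{D[n]}_\R$. Since $\C^{D[k]}_\R$ is itself a cofibration category by \Cref{prop:reedy-vs-levelwise}(1), exactness will then follow from condition (1) of the adequacy of $D$ applied to $\C^{D[k]}_\R$. For condition (2), the identification $A(\partial \Delta[n]) = D[k] \times D(\partial \Delta[n])$ reduces the claim to showing that $\mathrm{id}_{D[k]} \times (D(\partial \Delta[n]) \into D[n])$ is a sieve, which is immediate from condition (2) for $D$ together with the fact that the product of a sieve with an identity is a sieve.

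Conditions (3) and (4) will be handled identically: \Cref{lem:curry-reedy} identifies the restriction functors $\C^{D[k] \times D[n]}_\R \to \C^{D[k] \times D(\horn{n,i})}_\R$ and $\C^{D[k] \times D(E[1])}_\R \to \C^{D[k] \times D[0]}_\R$ with the corresponding functors over $\C^{D[k]}_\R$, which are weak equivalences by conditions (3) and (4) of the adequacy of $D$ applied to the cofibration category $\C^{D[k]}_\R$. The hardest part will be verifying the initial identification $A(K) = D[k] \times DK$ as homotopical categories---matching the weak equivalence structures on the two sides requires some care---and confirming that \Cref{lem:curry-reedy} applies in sufficient generality, in particular to the direct categories $D(\horn{n,i})$ and $D(E[1])$ equipped with their homotopical structures.
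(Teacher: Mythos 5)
Your proof is correct and follows essentially the same route as the paper: condition (2) via stability of direct categories and sieves under products, and conditions (1), (3), (4) via \Cref{lem:curry-reedy} to reduce to the adequacy of $D$ applied to the cofibration category $\C^{D[k]}_\R$. The paper's proof is terser and leaves implicit the identification $A(K) \cong D[k] \times DK$ of left Kan extensions, which you rightly flag as a point needing verification.
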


\begin{proof}
 Direct categories and sieves are stable under products and thus condition (2) follows. For (1) we also use \Cref{lem:curry-reedy}. Finally, for (3) and (4), we use \Cref{lem:curry-reedy} again to reduce it to the case of $D$.
\end{proof}

\begin{lemma}\label{lem:equiv-of-protoframes}
 Suppose $A, B \colon \Delta \to \hoCat$ satisfy conditions (1) and (2) of \Cref{def:protoframe} and let $f \colon A \to B$ be a natural transformation such that for each $m \in \bbN$, $f_m \colon A[m] \to B[m]$ induces an equivalence of cofibration categories $\C^{B[m]}_\R \to \C^{A[m]}_\R$. Then $A$ is adequate if and only if $B$ is adequate.
\end{lemma}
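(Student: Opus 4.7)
By symmetry it suffices to prove that if $A$ is adequate then so is $B$; conditions (1) and (2) for $B$ are already assumed, so only (3) and (4) are at stake. The strategy is to establish the following \textbf{key claim}: for every cofibration category $\C$ and every finite simplicial set $K$, the natural transformation $f$ induces a weak equivalence of cofibration categories $f_K^* \colon \C^{B(K)}_\R \to \C^{A(K)}_\R$. Granting the key claim, each of conditions (3) and (4) for $B$ drops out of a $2$-out-of-$3$ argument in the naturality square whose horizontal arrows are the restrictions along the inclusion $K \into \simp{m}$: the left vertical $\C^{B[m]}_\R \to \C^{A[m]}_\R$ is an equivalence by hypothesis, the right vertical $\C^{B(K)}_\R \to \C^{A(K)}_\R$ is an equivalence by the key claim, and the bottom row is an equivalence by adequacy of $A$ (taking $K = \horn{m,i}$ for condition (3) and $K = E[1]$, $m=0$ for condition (4)), whence the top row is an equivalence, which is the required condition for $B$.

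The key claim is proved by induction on the number of non-degenerate simplices of $K$; the base case $K = \simp{m}$ is the hypothesis. For the inductive step, write $K$ as a pushout $L \cup_{\bdsimp{n}} \simp{n}$ with $L$ having strictly fewer non-degenerate simplices. Since left Kan extensions preserve colimits, this gives $A(K) \cong A(L) \sqcup_{\bd A[n]} A[n]$ and analogously for $B$; by condition (2) the map $\bd A[n] \into A[n]$ (and its analogue for $B$) is a sieve. A gluing argument then identifies $\C^{A(K)}_\R$ with the pullback $\C^{A(L)}_\R \times_{\C^{\bd A[n]}_\R} \C^{A[n]}_\R$, and \Cref{lem:reedy-lifting} implies that restriction along the sieve is a fibration of cofibration categories in the sense of \Cref{thm:fib-cat-of-cof-cat}. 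The natural transformation $f$ supplies a map between the two pullback squares, and at each vertex it is a weak equivalence (by induction at $A(L)$ and $\bd A[n]$, by hypothesis at $A[n]$); since pullbacks along fibrations preserve weak equivalences in the fibration category of cofibration categories, the induced map on pullbacks is a weak equivalence.

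The case $K = E[1]$ needed for condition (4) is not finite, but $E[1]$ is a filtered colimit of its finite subcomplexes, so $A(E[1])$ is a filtered colimit of objects to which the finite-$K$ case of the key claim applies, and the equivalence is preserved upon passing to the colimit (alternatively, $A(E[1])$ can be filtered in the manner used in the proof of \cite[Lem.\ 3.13]{szumilo:two-models} and the same inductive argument applies transfinitely). I expect the main technical obstacle to be the gluing identification $\C^{A(K)}_\R \cong \C^{A(L)}_\R \times_{\C^{\bd A[n]}_\R} \C^{A[n]}_\R$ together with its compatibility with the fibration structure on cofibration categories: the pullback of underlying categories is routine, but confirming that the resulting restriction functors satisfy the fibration axioms of \Cref{sec:fibcat} requires careful bookkeeping of the factorization and lifting properties, for which \Cref{lem:reedy-lifting} supplies the crucial input. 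A secondary point to be addressed is that the natural transformation $f$ does yield an exact functor $f_K^*$ at each stage; this follows from \Cref{lem:exactness-criterion} using that the hypothesis already guarantees the criterion at every simplex $A[m] \to B[m]$ and that the criterion is stable under the pushout-along-sieve construction above.
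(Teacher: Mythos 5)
Your proposal is correct and follows essentially the same route as the paper: the paper's (very terse) proof reduces the lemma to showing that $\C^{B(K)}_\R \to \C^{A(K)}_\R$ is an equivalence for every simplicial set $K$, established by induction on skeleta using the fibration category structure on cofibration categories of \Cref{thm:fib-cat-of-cof-cat} --- exactly your key claim and your cell-attachment induction via pullbacks along sieve-restriction fibrations. Your explicit treatment of the infinite complex $E[1]$ and of the gluing identification is a faithful expansion of details the paper leaves implicit.
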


\begin{proof}
 It suffices to show that for each simplicial set $K$, the induced functor $\C^{BK}_\R \to \C^{AK}_\R$ is an equivalence of cofibration categories. This can be proven by induction on skeleta with the base case given by the assumption and the inductive steps using the structure of a fibration category on the category of cofibration categories \Cref{thm:fib-cat-of-cof-cat}.
\end{proof}

\begin{proposition}\label{prop:DmxDn-Dmxn}
 For any cofibration category $\C$, the canonical inclusion $D([k] \times [m]) \into D[k] \times D[m]$ induces an equivalence $\C^{D[k] \times D[m]}_\R \to \C^{D([k] \times
 [m])}_\R$ of cofibration categories of diagrams.
\end{proposition}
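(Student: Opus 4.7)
The plan is to apply 2-out-of-3 to the commutative triangle of homotopical functors
\[
D([k]\times[m]) \xrightarrow{\iota} D[k]\times D[m] \xrightarrow{p_{[k]}\times p_{[m]}} [k]\times[m],
\]
whose composite is the projection $p \colon D([k]\times[m]) \to [k]\times[m]$. Contravariantly this gives a triangle of restriction functors from $\C^{[k]\times[m]}_\R$ through $\C^{D[k]\times D[m]}_\R$ to $\C^{D([k]\times[m])}_\R$, all exact by \Cref{lem:exactness-criterion} (via a routine inspection of the relevant latching categories). To conclude that $\iota^*$ is a weak equivalence it then suffices, by 2-out-of-3 for weak equivalences of cofibration categories, to show that both $(p_{[k]}\times p_{[m]})^*$ and the composite $p^*$ are weak equivalences.

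For $(p_{[k]}\times p_{[m]})^*$, I would factor it as
\[
\C^{[k]\times[m]}_\R \xrightarrow{(p_{[k]}\times\id_{[m]})^*} \C^{D[k]\times[m]}_\R \xrightarrow{(\id_{D[k]}\times p_{[m]})^*} \C^{D[k]\times D[m]}_\R.
\]
Applying \Cref{lem:curry-reedy} in the appropriate orientation converts each factor into an instance of \Cref{lem:Dm-to-m}: the first becomes $p_{[k]}^* \colon (\C^{[m]}_\R)^{[k]}_\R \to (\C^{[m]}_\R)^{D[k]}_\R$ with base cofibration category $\C^{[m]}_\R$ (a cofibration category by \Cref{prop:reedy-vs-levelwise}), and the second becomes $p_{[m]}^* \colon (\C^{D[k]}_\R)^{[m]}_\R \to (\C^{D[k]}_\R)^{D[m]}_\R$ with base $\C^{D[k]}_\R$. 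Both are weak equivalences of cofibration categories, and hence so is their composite.

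The main obstacle is showing that $p^* \colon \C^{[k]\times[m]}_\R \to \C^{D([k]\times[m])}_\R$ is a weak equivalence---an extension of \Cref{lem:Dm-to-m} from a single simplex to the product $[k]\times[m]$. I would argue that $p$ is a homotopy equivalence of homotopical direct categories in the appropriate sense: every object $([n],\psi)$ of $D([k]\times[m])$ receives a weak equivalence from $([0],\psi(n))$ given by the inclusion $\{n\} \hookrightarrow [n]$ of the terminal vertex, which makes $p$ induce an equivalence on homotopy categories. Promoting this to a weak equivalence on cofibration categories of Reedy cofibrant diagrams should proceed by adapting the proof of \Cref{lem:Dm-to-m} from \cite{szumilo:two-models}, most likely by a Reedy-inductive verification of the approximation properties of \Cref{thm:approximation-properties}. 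The adaptation is essentially formal, the only subtlety being that the object-level section $(i,j) \mapsto ([0],(i,j))$ of $p$ does not extend to a functorial section on morphisms, so the comparison must be carried out through zigzags rather than via a strict homotopy inverse.
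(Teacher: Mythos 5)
Your overall skeleton coincides with the paper's: both arguments run 2-out-of-3 on the triangle $D([k]\times[m])\to D[k]\times D[m]\to[k]\times[m]$ whose composite is $p$, and your treatment of the leg $(p_{[k]}\times p_{[m]})^*$ by currying via \Cref{lem:curry-reedy} and applying \Cref{lem:Dm-to-m} twice is exactly right. The gap is the remaining leg: the claim that $p^*\colon\C^{[k]\times[m]}_\R\to\C^{D([k]\times[m])}_\R$ is a weak equivalence ``by adapting the proof of \Cref{lem:Dm-to-m}.'' That lemma is genuinely special to linear orders: its proof rests on the fact that $[m]$ is a chain, which provides a functorial section $[m]\to D[m]$, $i\mapsto([i],\,[i]\into[m])$, exhibiting $p$ as a homotopy equivalence of homotopical categories. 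For $P=[k]\times[m]$ the down-set of an element is no longer a chain, so no analogous functorial section of $p\colon DP\to P$ exists---this is precisely the obstruction you name and then set aside as ``essentially formal.'' It is not: your object-level zigzag only shows that $\Ho(p)$ is an equivalence of index categories, which says nothing about restriction along $p$ being a weak equivalence of diagram cofibration categories, and a direct verification of the Approximation Properties of \Cref{thm:approximation-properties} for $p^*$ would require constructing and controlling a (homotopy) left Kan extension along $p$, which nothing in your sketch supplies.

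This missing step is where the paper's real work lies. It interposes the subdivision $\Sd([k]\times[m])$ (non-degenerate chains with the $\max$ functor) and proves two separate facts: the inclusion $\Sd P\into DP$ induces an equivalence (\cite[Lem.\ 3.18]{szumilo:two-models}), and $\max^*\colon\C^P\to\C^{\Sd P}$ is a weak equivalence (\Cref{lem:SdP-P}). The latter occupies \Cref{lem:max-inv-reedy,lem:lan-exists-poset,lem:equiv-for-Lan}: one checks that restriction to the fibers $\max^{-1}\{p\}$ preserves Reedy cofibrancy, that $\Lan_{\max}$ exists and is computed as $\colim$ over these fibers (which have initial objects and only weak equivalences), and then verifies (App1) and (App2) for $\Lan_{\max}$. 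The reason $\max$ is used instead of $p\colon DP\to P$ is exactly that its fibers and latching categories admit this analysis, whereas the corresponding statements for $p$ are what you would need to prove from scratch. So: right strategy and a correct second leg, but the third leg is asserted rather than proved, and the proposed route to it would not go through as described.
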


As a combination of \Cref{lem:DkxD-protoframe,lem:equiv-of-protoframes,prop:DmxDn-Dmxn}, we obtain:

\begin{corollary}\label{cor:Dkx-protoframe}
 For any $k \in \bbN$, the cosimplicial object $D([k] \times [-]) \colon \Delta \to \hoCat$ is adequate.
\end{corollary}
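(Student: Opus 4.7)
I would show that the restriction functor $i^{*} \colon \C^Q_\R \to \C^P_\R$ (writing $P := D([k] \times [m])$, $Q := D[k] \times D[m]$, and $i$ for the diagonal inclusion) is a weak equivalence of cofibration categories by first verifying exactness and then applying 2-out-of-3 in the commuting triangle of homotopical functors
\[
P \xrightarrow{i} Q \xrightarrow{p \times p} [k] \times [m],
\]
whose composite equals the canonical projection $p \colon P \to [k] \times [m]$. The principal advantage of this route is that it reduces the problem to two instances of a single, already-understood phenomenon (the projection $DJ \to J$ inducing a weak equivalence on Reedy diagrams) plus a routine exactness check.

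\textbf{Key steps.} Exactness of $i^{*}$ follows from \Cref{lem:exactness-criterion}. For each $x = ([n], (\varphi_1, \varphi_2)) \in P$, the latching-category map $\bd(P \slice x) \to \bd(Q \slice i(x))$ factors through the full subcategory $K \subseteq \bd(Q \slice i(x))$ of pairs $(g_1 \colon [n_1] \hookrightarrow [n], g_2 \colon [n_2] \hookrightarrow [n])$ with $\Im g_1 \cup \Im g_2 \neq [n]$. This $K$ is a sieve since images can only shrink under precomposition, and the inclusion $\bd(P \slice x) \to K$ is cofinal because the slice at each $(g_1, g_2) \in K$ has an initial object, namely the unique diagonal $(f, f)$ with $\Im f = \Im g_1 \cup \Im g_2 \subsetneq [n]$. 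Next, show that both $(p \times p)^{*} \colon \C^{[k] \times [m]}_\R \to \C^Q_\R$ and $p^{*} \colon \C^{[k] \times [m]}_\R \to \C^P_\R$ are weak equivalences: the first by combining \Cref{lem:curry-reedy} with \Cref{lem:Dm-to-m} applied to each factor (rewriting $\C^Q_\R \equiv (\C^{D[m]}_\R)^{D[k]}_\R$ and $\C^{[k] \times [m]}_\R \equiv (\C^{[m]}_\R)^{[k]}_\R$ and propagating the weak equivalence $\C^{[m]}_\R \to \C^{D[m]}_\R$ through the outer diagram formations), and the second by adapting the argument of \Cref{lem:Dm-to-m} to the projection $p \colon D([k] \times [m]) \to [k] \times [m]$. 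By 2-out-of-3 for weak equivalences of cofibration categories, together with the exactness of $i^{*}$, the functor $i^{*}$ is a weak equivalence.

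\textbf{Main obstacle.} The hardest step is showing that $p^{*} \colon \C^{[k] \times [m]}_\R \to \C^P_\R$ is a weak equivalence. The projection $p$ is a homotopy equivalence in the classical sense (both $BD([k] \times [m])$ and $B([k] \times [m])$ are contractible, modelling $|\Delta[k] \times \Delta[m]|$ and a point respectively), but promoting this to a weak equivalence of cofibration categories of Reedy cofibrant diagrams requires generalizing \Cref{lem:Dm-to-m} (stated only for linear simplices) to the bisimplex $[k] \times [m]$. This generalization should be routine since the underlying argument depends only on cofinality properties shared by both targets, but it is not provided explicitly in the excerpt. As a fallback avoiding this generalization, one could verify the approximation properties of \Cref{thm:approximation-properties} for $i^{*}$ directly: (App1) is straightforward, since for each $q = (([n_1], \alpha_1), ([n_2], \alpha_2)) \in Q$ one can construct an explicit weak equivalence $q \to i(p)$ in $Q$ by taking $n = \max(n_1, n_2)$ and extending the indexing data by constants below the top element, while (App2) would require a more delicate construction, plausibly via a left Kan extension of $Y$ along $i$ combined with a pushout in $\C^Q_\R$.
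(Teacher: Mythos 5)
There is a genuine gap: even granting every step, your argument proves the wrong statement. What you establish is (a version of) \Cref{prop:DmxDn-Dmxn} --- that the inclusion $D([k]\times[m]) \into D[k]\times D[m]$ induces an equivalence $\C^{D[k]\times D[m]}_\R \to \C^{D([k]\times[m])}_\R$ for each fixed $m$ --- but you never return to the actual claim, namely that $D([k]\times[-])$ is \emph{adequate} in the sense of \Cref{def:protoframe}. Adequacy has four conditions, and conditions (3) and (4) concern the left Kan extensions $A(\horn{m,i})$ and $A(E[1])$ over non-representable simplicial sets; these do not follow formally from an objectwise equivalence at the $[m]$'s. The paper's proof consists precisely of the bridge you omit: it combines \Cref{prop:DmxDn-Dmxn} with the adequacy of $D[k]\times D[-]$ (\Cref{lem:DkxD-protoframe}) via the transfer result \Cref{lem:equiv-of-protoframes}, whose proof is an induction on skeleta using the fibration category structure of \Cref{thm:fib-cat-of-cof-cat}. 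You would also need to verify conditions (1) and (2) of \Cref{def:protoframe} for $D([k]\times[-])$ itself (directness, exactness of the cosimplicial operators, the sieve condition on $\partial A[m] \into A[m]$); these are easy but appear nowhere in your write-up.

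Concerning the part you do address: the exactness argument for $i^*$ via \Cref{lem:exactness-criterion} is essentially the paper's \Cref{lem:DmDn-to-Dmn}, with the same sieve and the same initial object $(A\cup B)\times(A\cup B)$ in the relevant slices, and is fine. But the step you flag as the ``main obstacle'' and then dismiss as routine --- that $p^* \colon \C^{[k]\times[m]}_\R \to \C^{D([k]\times[m])}_\R$ is a weak equivalence --- is exactly where the paper invests its effort. \Cref{lem:Dm-to-m} is proved in \cite{szumilo:two-models} only for the linear posets $[m]$; for the product poset the paper instead routes through the subdivision $\Sd([k]\times[m])$, using $\max^*\colon \C^P \to \C^{\Sd P}$ (\Cref{lem:SdP-P}, which itself rests on \Cref{lem:max-inv-reedy,lem:lan-exists-poset,lem:equiv-for-Lan} and the $\Lan_{\max}$ adjunction) together with \cite[Lem.\ 3.18]{szumilo:two-models} for $\Sd([k]\times[m]) \to D([k]\times[m])$, and then 2-out-of-3. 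Contractibility of classifying spaces is not the relevant notion here: passing from a homotopy equivalence of index categories to a weak equivalence of cofibration categories of \emph{Reedy} diagrams requires at least exactness of $p^*$ on Reedy diagram categories and the comparison with the levelwise structures of \Cref{prop:reedy-vs-levelwise}, none of which you verify for $p \colon D([k]\times[m]) \to [k]\times[m]$. So the step you label routine is in fact the technical heart of the proposition, and your fallback via the Approximation Properties for $i^*$ is too sketchy (particularly (App2)) to substitute for it.
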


Our next goal is the proof of \Cref{prop:DmxDn-Dmxn}. Our techniques closely follow these of \cite[Sec.\ 23]{dhks} and \cite[Sec.\ 9.5]{radulescu-banu}. In the following series of lemmas, we will assume that $\C$ is a cofibration category and $P$ a finite poset. 

\begin{lemma}\label{lem:max-inv-reedy}
 Let $X \colon
\Sd P \to \C$ be a Reedy cofibrant diagram. Then the restriction
$X|\max^{-1}\{p\}$ is again a Reedy cofibrant diagram.
\end{lemma}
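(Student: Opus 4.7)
The plan is to identify $L_\varphi X$ as an iterated pushout in which $L_\varphi(X|\max^{-1}\{p\})$ appears as one corner and the Reedy cofibration $L_{\varphi'}X \to X_{\varphi'}$ is the leg being pushed out; the lemma will then follow by closure of cofibrations under pushout and composition, together with Reedy cofibrancy of $X$ at $\varphi$.

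Fix $\varphi \in \max^{-1}\{p\}$, say $\varphi = (p_0 < \dots < p_{n-1} < p)$. If $n = 0$, then $\partial(\max^{-1}\{p\} \downarrow \varphi)$ is empty, so $L_\varphi(X|\max^{-1}\{p\})$ is an initial object and the map to $X_\varphi$ is a cofibration by cofibrancy. Assume $n \geq 1$ and set $\varphi' := (p_0 < \dots < p_{n-1})$. I identify the latching category $\partial(\Sd P \downarrow \varphi)$ with the poset $\cP_\varphi^-$ of non-empty proper subsets of $\{p_0, \dots, p_{n-1}, p\}$; under this identification $A := \partial(\max^{-1}\{p\} \downarrow \varphi)$ is the subposet of subsets containing $p$, and $\partial(\Sd P \downarrow \varphi') = \cP_{\varphi'}^-$ is the subposet of non-empty proper subsets of $\{p_0, \dots, p_{n-1}\}$. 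Decompose $\cP_\varphi^- = S \cup T$ by setting $S := A \cup \cP_{\varphi'}^-$ (the \emph{star}) and $T := \cP_{\varphi'}$, the full poset of non-empty subsets of $\{p_0, \dots, p_{n-1}\}$ (the \emph{antistar}); their intersection is $S \cap T = \cP_{\varphi'}^-$. No morphism in $\cP_\varphi^-$ connects $A$ with $T \setminus \cP_{\varphi'}^- = \{\varphi'\}$: a morphism $\alpha \to \varphi'$ with $\alpha \in A$ would force $p \in \alpha \subseteq \varphi'$, while a morphism $\varphi' \to \alpha$ (with $\alpha = S \cup \{p\}$) would force $\varphi' \subseteq S \subsetneq \varphi'$. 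Hence $\cP_\varphi^-$ is the pushout of $S \hookleftarrow \cP_{\varphi'}^- \hookrightarrow T$ in $\Cat$, and passing to colimits of $X$ — using that $T$ has terminal object $\varphi'$ and $\colim_{\cP_{\varphi'}^-} X = L_{\varphi'} X$ — yields
\[
L_\varphi X \iso \colim_S X \sqcup_{L_{\varphi'}X} X_{\varphi'}.
\]

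To finish, I identify $\colim_S X$ with $L_\varphi(X|\max^{-1}\{p\}) = \colim_A X$ via cofinality of $A \hookrightarrow S$: for $\sigma \in \cP_{\varphi'}^-$, the comma category $\sigma \downarrow A$ has initial object $\sigma \to \sigma \cup \{p\}$, because every morphism $\sigma \to S \cup \{p\}$ in $S$ (i.e.\ every inclusion $\sigma \subseteq S$ with $S \subsetneq \varphi'$) factors uniquely as $\sigma \hookrightarrow \sigma \cup \{p\} \hookrightarrow S \cup \{p\}$ in $A$. Therefore
\[
L_\varphi X \iso L_\varphi(X|\max^{-1}\{p\}) \sqcup_{L_{\varphi'}X} X_{\varphi'}.
\]
Reedy cofibrancy of $X$ at $\varphi'$ makes $L_{\varphi'} X \to X_{\varphi'}$ a cofibration; its pushout $L_\varphi(X|\max^{-1}\{p\}) \to L_\varphi X$ is then a cofibration, which composed with the cofibration $L_\varphi X \to X_\varphi$ (Reedy cofibrancy of $X$ at $\varphi$) yields the desired latching cofibration. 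The main obstacle is verifying the pushout decomposition of $\cP_\varphi^-$ in $\Cat$ — specifically, ruling out any hidden morphism between $A$ and $\varphi'$ — together with the cofinality of $A \hookrightarrow S$; with those in place, everything reduces to the two-out-of-three structure of cofibrations.
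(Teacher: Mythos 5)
Your argument is correct, and at its combinatorial core it coincides with the paper's: your ``star'' $S = A \cup \cP_{\varphi'}^-$ is exactly the intermediate poset $L$ that the paper interposes between $\partial(\max^{-1}\{p\}\downarrow\varphi)$ and $\partial(\Sd P\downarrow\varphi)$, and your cofinality check via the least element $\sigma\cup\{p\}$ is the same as theirs. The difference is in how the conclusion is extracted. The paper verifies the hypotheses of its general exactness criterion (\Cref{lem:exactness-criterion})---the map of latching categories factors as a cofinal functor followed by a sieve---and then delegates the step ``restriction to a sieve induces a cofibration on colimits of Reedy cofibrant diagrams'' to \cite[Thm.\ 9.4.1(1a)]{radulescu-banu}. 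You instead unwind that step by hand: since the sieve $S \into \partial(\Sd P\downarrow\varphi)$ omits exactly one object $\varphi'$, the latching poset is the pushout of $S \hookleftarrow \cP_{\varphi'}^- \into \cP_{\varphi'}$ in $\Cat$ (your check that no morphism joins $A$ to $\varphi'$ is precisely what makes the union a pushout), whence $L_\varphi X \iso \colim_S X \sqcup_{L_{\varphi'}X} X_{\varphi'}$ and the latching map of the restriction is a pushout of the cofibration $L_{\varphi'}X \to X_{\varphi'}$ followed by the latching cofibration of $X$ at $\varphi$. This buys a self-contained, essentially citation-free proof of the cofibration claim; the paper's route buys reusability, since the same criterion is applied again in \Cref{lem:DmDn-to-Dmn}, where the complement of the sieve is no longer a single object and your explicit pushout would have to be iterated.

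One point you should not elide: in a cofibration category only pushouts along cofibrations and initial objects are guaranteed to exist, and the paper's definition of Reedy cofibrancy explicitly requires the latching object to exist. So the existence of $\colim_S X \iso \colim_A X = L_\varphi(X|\max^{-1}\{p\})$, which your pushout formula presupposes, needs a word of justification. It follows from the result of \cite[Thm.\ 9.4.1(1a)]{radulescu-banu} that the paper itself invokes, applied to the restriction of $X$ to the sieve $S$ of the finite direct category $\partial(\Sd P\downarrow\varphi)$ (itself a sieve in $\Sd P$), or alternatively by iterating your own pushout decomposition one object of $S$ at a time. With that added, the proof is complete.
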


\begin{proof}
 We verify that the inclusion $\max^{-1}\{p\} \into \Sd P$ satisfies the assumptions of \Cref{lem:exactness-criterion}.

 Let $A \in \max^{-1}\{ p \}$, i.e.~$A \subseteq P$ is a chain satisfying $\max A = p$.
We have:
 \begin{align*} \partial(\max{}^{-1}\{ p \} \downarrow A) &= \{B \varsubsetneq A~|~B \neq \varnothing, A
\text{ and } \max B = p\}, \\
  \partial(\Sd P \downarrow A) &= \{B \varsubsetneq A~|~B \neq \varnothing \}. 
\end{align*}
 The map $\partial(\max{}^{-1}\{ p \} \downarrow A) \into \partial(\Sd P \downarrow A)$
factors through:
 \[L := \{ B \subseteq A~|~B\neq \varnothing \text{ and there exists } C \supseteq
B \text{ such that } C \neq A \text{ and } \max C = p\}.\]
 The inclusion $L \into \partial(\Sd P \downarrow A)$ is clearly a sieve. Thus
it remains to show that $\partial(\max{}^{-1}\{ p \} \downarrow A) \into L$ is cofinal. By \cite[Thm.\ IX.3.1]{mac-lane:cwm}, we need to show that for each $B \in L$, the slice category $B \downarrow \partial(\max{}^{-1}\{ p \} \downarrow A)$ is
connected. Explicitly, we have:
 \[ B \downarrow \partial(\max{}^{-1}\{ p \} \downarrow A) = \{ C \supseteq B ~|~ C \neq A \text{ and } \max C = p\}.\]
 This poset has the least element, namely $B \cup \{p\}$, and hence is
connected.
\end{proof}

\begin{lemma}\label{lem:lan-exists-poset}
 Let $X \colon \Sd P \to \C$ be a Reedy cofibrant diagram. Then
the left Kan extension $\Lan_{\max}(X) \colon P \to \C$ exists and is given by $\Lan_{\max}(X)_p = \colim (X|\max{}^{-1}\{ p \})$.
\end{lemma}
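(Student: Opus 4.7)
The plan is to compute $\Lan_{\max}(X)_p$ via the pointwise formula as a colimit over the comma category $\max \downarrow p$, and then reduce this colimit to one indexed by $\max^{-1}\{p\}$ using a cofinality argument. In our poset setting, $\max \downarrow p$ is easy to describe: it is (isomorphic to) the full subposet of $\Sd P$ on those chains $A$ with $\max A \leq p$, and $\max^{-1}\{p\}$ appears as the subposet of chains whose maximum is exactly $p$.

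The key step is to verify that the inclusion $\max^{-1}\{p\} \hookrightarrow \max \downarrow p$ is cofinal. For any $A$ with $\max A \leq p$, the slice $A \downarrow \max^{-1}\{p\}$ consists of chains $C \supseteq A$ satisfying $\max C = p$. Such chains always exist, since $A \cup \{p\}$ is one; and in fact $A \cup \{p\}$ is the minimum of the slice, because any $C$ in the slice must contain both $A$ and $p$. The slice therefore has an initial object and is in particular connected, so Mac Lane's criterion \cite[Thm.\ IX.3.1]{mac-lane:cwm} yields cofinality. This mirrors the connectedness computation already used in the proof of \Cref{lem:max-inv-reedy}, which is why I expect no real obstacle here.

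Once cofinality is established, existence of $\Lan_{\max}(X)_p$ reduces to existence of $\colim(X|\max^{-1}\{p\})$. \Cref{lem:max-inv-reedy} provides exactly what is needed: the restriction $X|\max^{-1}\{p\}$ is Reedy cofibrant on the finite direct category $\max^{-1}\{p\}$, and colimits of Reedy cofibrant diagrams on finite direct categories exist in any cofibration category by \cite[Thm.\ 9.4.1(1a)]{radulescu-banu}. The pointwise Kan extensions then assemble into a functor $P \to \C$ via the inclusions $\max^{-1}\{p\} \subseteq \max \downarrow q$ induced by $p \leq q$, and the resulting structure maps are obtained from the universal property of the colimit. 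The only step requiring any thought is spotting the initial object $A \cup \{p\}$ of the slice; everything else is formal.
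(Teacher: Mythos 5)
Your proof is correct and follows essentially the same route as the paper: the pointwise formula for left Kan extensions combined with cofinality of the inclusion $\max^{-1}\{p\} \into (\max \downarrow p)$. The paper simply asserts that cofinality is "obvious," whereas you spell out the verification via the initial object $A \cup \{p\}$ of the relevant slice and also make explicit why the colimit exists (Reedy cofibrancy of the restriction from \Cref{lem:max-inv-reedy}); both additions are accurate and consistent with the paper's intent.
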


\begin{proof}
 For $p \in P$, the obvious inclusion $\max{}^{-1}\{p\} \into (\max \downarrow p)$
 is cofinal and hence, by the pointwise formula for Kan extensions
\cite[Thm.~X.5.1]{mac-lane:cwm}, we have:
\begin{align*}
  \Lan_{\max}(X)_p & = \colim(X|(\max \downarrow p)) \iso \colim (X|\max{}^{-1}\{p\}) \text{.}\qedhere
\end{align*}

\end{proof}

\begin{lemma}\label{lem:equiv-for-Lan}
 Let $A \colon P \to \C$ and $X \colon \Sd P \to \C$ be Reedy cofibrant.
Then a map $\Lan_{\max}(X) \to A$ is a weak equivalence if
and only if its transpose $X \to \max^* A$ is a weak equivalence.
\end{lemma}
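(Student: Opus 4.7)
The plan is to reduce both sides of the equivalence to the same pointwise condition, and then to verify that condition by a cofinality argument. First, observe that every morphism in $\max^{-1}\{p\}$ is sent by $\max$ to $\id_p$, and is therefore a weak equivalence of $\Sd P$; since $X$ is homotopical, this forces each structural map $X_C \to X_{C'}$ (for $C \to C'$ in $\max^{-1}\{p\}$) to be a weak equivalence. In particular $X_{\{p\}} \to X_C$ is a weak equivalence for every $C \in \max^{-1}\{p\}$, so by 2-out-of-3 applied to the naturality triangle $X_{\{p\}} \to X_C \to A_p$, the transpose $X \to \max^* A$ is a levelwise weak equivalence if and only if $X_{\{p\}} \to A_p$ is a weak equivalence for every $p \in P$. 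Combining this with the fact that, by \Cref{lem:lan-exists-poset}, the map $\Lan_{\max}(X) \to A$ is a levelwise weak equivalence iff $\colim(X|\max^{-1}\{p\}) \to A_p$ is a weak equivalence for every $p$, and that the transpose at $\{p\}$ factors through the coprojection $X_{\{p\}} \to \colim(X|\max^{-1}\{p\})$, the lemma reduces to the \emph{Key Claim}: this coprojection is a weak equivalence for every $p$.

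To prove the Key Claim, I plan to apply \Cref{lem:equiv-cofinal} to the inclusion $\iota \from \{\ast\} \to \max^{-1}\{p\}$ picking out the initial object $\{p\}$, which requires showing that the restriction $\iota^* \from \C^{\max^{-1}\{p\}}_\R \to \C$ is a weak equivalence of cofibration categories. I verify this via the Approximation Properties (\Cref{thm:approximation-properties}). Property (App1) is immediate from homotopicalness and 2-out-of-3 applied at each $C$ as above. For (App2), given $Y \in \C^{\max^{-1}\{p\}}_\R$ and $f \from Y_{\{p\}} \to Z$ in $\C$, I would factor $f$ as a cofibration $j \from Y_{\{p\}} \to Z'$ followed by a weak equivalence $Z' \weto Z$, and define $W \from \max^{-1}\{p\} \to \C$ pointwise by the pushout $W_C \coloneqq Y_C \sqcup_{Y_{\{p\}}} Z'$ along $j$; the coprojections $Y_C \to W_C$ then assemble into a map $i \from Y \to W$ whose restriction to $\{p\}$ is $j$, supplying the required filler.

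The main obstacle is to check that this $W$ is both Reedy cofibrant and homotopical. For $C \neq \{p\}$ the latching category $\partial(\max^{-1}\{p\} \downarrow C)$ contains $\{p\}$ as initial object and is thus nonempty and connected, so a direct computation (commuting colimits with the pushout that defines $W$) gives $L_C W \iso L_C Y \sqcup_{Y_{\{p\}}} Z'$. Pasting of pushouts then identifies the latching map of $W$ at $C$ with a pushout of the Reedy cofibration $L_C Y \to Y_C$, which is again a cofibration. Homotopicalness follows from the gluing lemma applied to the evident map of pushout cospans $(Y_{C'} \leftarrow Y_{\{p\}} \xrightarrow{j} Z') \to (Y_C \leftarrow Y_{\{p\}} \xrightarrow{j} Z')$, whose left vertical is the weak equivalence $Y_{C'} \to Y_C$ and whose other verticals are identities. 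Once (App2) is in hand, \Cref{lem:equiv-cofinal} applied to $\iota$ produces the weak equivalence $X_{\{p\}} \weto \colim(X|\max^{-1}\{p\})$ required by the Key Claim, completing the proof.
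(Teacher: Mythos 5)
Your proof is correct and follows essentially the same route as the paper: both arguments reduce the two conditions to the pointwise statement that $X_{\{p\}} \to A_p$ is a weak equivalence for all $p$, via the key fact that the coprojection $X_{\{p\}} \to \colim(X|\max^{-1}\{p\}) \iso \Lan_{\max}(X)_p$ is a weak equivalence, which both obtain from \Cref{lem:equiv-cofinal} (together with \Cref{lem:max-inv-reedy,lem:lan-exists-poset}). The only divergence is in verifying the hypothesis of \Cref{lem:equiv-cofinal}: the paper does this in one line by observing that $\{p\} \into \max^{-1}\{p\}$ is a homotopy equivalence of homotopical categories (all maps are weak equivalences and $\{p\}$ is initial, so the constant functor is a homotopy inverse), whereas you verify the Approximation Properties by hand with an explicit pushout construction for (App2); your verification is correct, but the shortcut via homotopy equivalences (as in \Cref{lem:Dm-to-m}) saves the Reedy-cofibrancy and gluing-lemma bookkeeping.
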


\begin{proof}
 We need to show that the following conditions are equivalent:
 \begin{enumerate}
  \item[1.] $\Lan_{\max}(X)_p \to A_p$ is a weak equivalence for all $p \in
P$.
  \item[2.] $X_S \to \max^*A_S$ is a weak equivalence for all $S \in \Sd P$.
 \end{enumerate}

 All morphisms  of the category $\max^{-1}\{p\}$ are weak equivalences and $\{p\}$ is its initial object, so the inclusion $\{p\} \into \max^{-1}\{p\}$ is a homotopy equivalence, and hence, by \Cref{lem:equiv-cofinal}, the induced map $X_{\{p\}} \to \Lan_{\max}(X)_p$ is an equivalence (since $ \Lan_{\max}(X)_p = \colim (X|\max{}^{-1}\{p\})$ by \Cref{lem:lan-exists-poset}). Thus, by 2-out-of-3, 1.\ is equivalent to:
 \begin{enumerate}
 \item[1'.] the composite $X_{\{p\}} \to \Lan_{\max}(X)_p \to A_p$ is a
weak equivalence for all $p \in P$.
 \end{enumerate}
 We will then show that $1'. \Leftrightarrow 2.$.

 For $2. \Rightarrow 1'.$, simply take $S = \{p\}$. For $1'. \Rightarrow 2.$,
consider the following commutative square:
\begin{ctikzpicture}
  \matrix[diagram]
  {
    |(XmS)| X_{\{\max S \}} & |(AmS)| A_{\max S} \\
    |(XS)|  X_S             & |(mAS)| (\max^* A)_S \\
  };

  \draw[->] (XmS) to node[above] {$\we$} (AmS);
  \draw[->] (XS)  to (mAS);
  \draw[->] (XmS) to node[left] {$\we$} (XS);
  \draw[eq] (AmS) to (mAS);
\end{ctikzpicture}
 Since $X$ is homotopical and weak equivalences in $\Sd P$ are created by
$\max$, the vertical left-hand arrow is a weak equivalence. By assumption the
top arrow is a weak equivalence, hence by 2-out-of-3 so is the bottom one.
\end{proof}

\begin{lemma}\label{lem:SdP-P}
 The functor $\max{}^* \colon \C^P \to \C^{\Sd P}$ is a weak equivalence of cofibration categories.
\end{lemma}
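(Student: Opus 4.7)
The plan is to verify the Approximation Properties of \Cref{thm:approximation-properties} for $\max^*$, using the levelwise cofibration structures on $\C^P$ and $\C^{\Sd P}$ (equivalent to the Reedy ones by \Cref{prop:reedy-vs-levelwise}(3)). Exactness of $\max^*$ is immediate, since weak equivalences, cofibrations, pushouts, and the initial object are all defined levelwise and $\max^*$ is restriction along a functor. Approximation Property~(App1) is also immediate: every $p \in P$ gives $\{p\} \in \Sd P$ with $\max\{p\} = p$, so $(\max^* f)_{\{p\}} = f_p$, and $\max^*$ reflects levelwise weak equivalences.

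For Approximation Property~(App2), given $A \in \C^P$ and $f \from \max^* A \to Y$, I would introduce the left adjoint $\Lan_{\max}$ (\Cref{lem:lan-exists-poset}) to produce $B := \Lan_{\max} Y'$, where $Y'$ is a Reedy cofibrant replacement of $Y$ (via \Cref{prop:reedy-vs-levelwise}(3) and \Cref{lem:extend-reedy}, possibly up to a zigzag). The unit of the adjunction gives $\eta_{Y'} \from Y' \to \max^* \Lan_{\max} Y' = \max^* B$, which assembles (together with the replacement zigzag) into the required weak equivalences $Y \to Z$ and $\max^* B \to Z$ of the (App2) square. The map $i \from A \to B$ is then obtained from the commuting composite $\max^* A \to \max^* B$ using the fact that $\max^*$ is fully faithful: any natural transformation $\beta \from \max^* A \to \max^* B$ has each component $\beta_S$ forced by naturality against the weak equivalence $\{\max S\} \into S$ in $\Sd P$ to equal $\beta_{\{\max S\}}$, and the induced assignment $p \mapsto \beta_{\{p\}}$ recovers a natural transformation $A \to B$.

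The key technical input is that $\eta_{Y'}$ is a levelwise weak equivalence: at $S \in \Sd P$, the component $(\eta_{Y'})_S \from Y'_S \to \colim(Y'|\max^{-1}\{\max S\})$ fits in a commutative triangle with the canonical arrows from $Y'_{\{\max S\}}$, in which $Y'_{\{\max S\}} \to Y'_S$ is a weak equivalence (since $\{\max S\} \into S$ is sent by $\max$ to the identity, hence is a weak equivalence in $\Sd P$) and $Y'_{\{\max S\}} \to \colim(Y'|\max^{-1}\{\max S\})$ is a weak equivalence by \Cref{lem:equiv-cofinal}, applied to the inclusion $\{\max S\} \into \max^{-1}\{\max S\}$ (a homotopy equivalence of homotopical categories, since $\{\max S\}$ is initial and every morphism in $\max^{-1}\{\max S\}$ is a weak equivalence); 2-out-of-3 then forces $(\eta_{Y'})_S$ to be a weak equivalence.

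The main obstacle is that $\max^*$ does not preserve Reedy cofibrancy, so \Cref{lem:equiv-for-Lan} cannot be applied directly to $\max^* A$. The resolution above performs the Reedy cofibrant replacement on $Y$ rather than on $\max^* A$, so that $\Lan_{\max}$ and the unit argument are invoked only on genuinely Reedy cofibrant inputs; the levelwise--Reedy interchange of \Cref{prop:reedy-vs-levelwise}(3) and the extension Lemma \Cref{lem:extend-reedy} are what make this manoeuvre available.
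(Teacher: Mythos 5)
Your reduction to the Approximation Properties is a reasonable framework, and your (App1) argument is correct (and simpler than what is strictly needed). The gap is in (App2). That condition requires, for $f \colon \max^*A \to Y$, an object $B$, a morphism $i \colon A \to B$ in $\C^P$, and weak equivalences $Y \to Z$ and $\max^*B \to Z$ making the square with $f$ and $\max^*i$ commute. Your construction only produces a wrong-way span: the Reedy cofibrant replacement of \Cref{lem:extend-reedy} gives $Y' \to Y$, and the unit gives $\eta_{Y'} \colon Y' \to \max^*B$, so you have $Y \leftarrow Y' \to \max^*B$ with no map out of $Y$ and, crucially, no map $\max^*A \to \max^*B$ at all: $f$ lands in $Y$, and $f$ admits no lift through $Y' \to Y$ in general. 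Hence the ``commuting composite $\max^*A \to \max^*B$'' to which you want to apply full faithfulness of $\max^*$ does not exist, and neither $i$ nor the commuting square is produced. This directionality problem is intrinsic to attacking the right adjoint head-on: the datum of (App2) for $\max^*$ is a map \emph{out of} $\max^*A$, which has no useful adjoint transpose, and converting the span into the required cospan while retaining a comparison with $\max^*A$ is exactly the missing content.

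The paper avoids this by verifying the Approximation Properties for the left adjoint $\Lan_{\max} \colon \C^{\Sd P}_\R \to \C^P$ instead. There the input of (App2) is $f \colon \Lan_{\max}(X) \to A$ with $X$ Reedy cofibrant; its transpose $\overline{f} \colon X \to \max^*A$ points the right way to be factored as a cofibration $i \colon X \to \tilde{A}$ (keeping $\tilde{A}$ Reedy cofibrant) followed by a weak equivalence $w \colon \tilde{A} \to \max^*A$, and \Cref{lem:equiv-for-Lan} shows that the transpose $\overline{w} \colon \Lan_{\max}(\tilde{A}) \to A$ is again a weak equivalence --- which is precisely the (App2) square with all arrows in the correct direction. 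One then concludes for $\max^*$ by 2-out-of-3, using that the unit $X \to \max^*\Lan_{\max}(X)$ is a natural weak equivalence (this is your ``key technical input'', which is correct and is exactly \Cref{lem:equiv-for-Lan} applied with $A = \Lan_{\max}(X)$), so that $\max^*\Lan_{\max}$ is homotopic to the weak equivalence $\C^{\Sd P}_\R \to \C^{\Sd P}$ of \Cref{prop:reedy-vs-levelwise}. I suggest restructuring your argument along these lines; the ingredients you have assembled are sound but do not suffice to carry (App2) for $\max^*$ directly.
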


\begin{proof}
 Putting $A := \Lan_{\max}(X)$ in \Cref{lem:equiv-for-Lan}, we deduce that
the unit in the diagram 
\begin{ctikzpicture}
  \matrix[diagram,column sep=3.5em,row sep=3.5em]
  {
    |(X)| & |(CP)| \cat{C}^P \\
    |(CSR)| \cat{C}^{\Sd P}_\Reedy & |(CS)| \cat{C}^{\Sd P} \\
  };

  \draw[->,inj] (CSR) to node[below] {$\we$} (CS);
  
  \draw[->] (CP) to node[right] {$\max^*$} (CS);

  \draw[->,dashed] (CSR) to[bend left] node[above left] {$\Lan_{\max}$} (CP);

  \node[rotate=-45,scale=1.3] at ($(X)!0.6!(CS)$) {$\Uparrow$};
\end{ctikzpicture}
is a natural weak
equivalence and hence the composite $\max^* \Lan_{\max}$ is homotopic to a
weak equivalence of \Cref{prop:reedy-vs-levelwise}.(3), thus is itself a weak equivalence.

 So by 2-out-of-3, it suffices to show that $\Lan_{\max}$ is a weak equivalence. We check the Approximation Properties of \Cref{thm:approximation-properties}.

 \textbf{(App1).} Let $X \to Y$ be a map in $\C^{\Sd P}_\R$ whose image
$\Lan_{\max}(X) \to \Lan_{\max}(Y)$ in $\C^P$ is a weak equivalence. We
need to show that $X \to Y$ is a weak equivalence, that is, for all $S \in
\Sd P$, $X_S \to Y_S$ is a weak equivalence. Since both $X$ and $Y$ are
homotopical and weak equivalences in $\Sd P$ are created by $\max$, we have a
commutative diagram:
\begin{ctikzpicture}
  \matrix[diagram]
  {
    |(Xm)| X_{\{\max S\}} & |(Ym)| Y_{\{\max S\}} \\
    |(X)|  X_S            & |(Y)|  Y_S \\
  };

  \draw[->] (Xm) to (Ym);
  \draw[->] (X)  to node[above] {$\we$} (Y);

  \draw[->] (Xm) to node[left]  {$\we$} (X);
  \draw[->] (Ym) to node[right] {$\we$} (Y);
\end{ctikzpicture}
 in which both vertical arrows are weak equivalences. Combining 
\Cref{lem:lan-exists-poset} and the assumption that
for all $p \in P$, $\Lan_{\max}(X)_p \to \Lan_{\max}(Y)_p$ is an equivalence, we
see that the bottom map is a weak equivalence as well. Hence, by 2-out-of-3 so
is the top map.

 \textbf{(App2).} Let $f \colon \Lan_{\max}(X) \to A$. Factor the transpose
$\overline{f} \colon X \to \max^* A$ as a cofibration followed by a weak
equivalence:
\begin{ctikzpicture}
  \matrix[diagram]
  {
    |(X)| X && |(mA)| \max^* A \\
    & |(tA)| \tilde{A} \\
  };

  \draw[->] (X) to node[above] {$\overline{f}$} (mA);

  \draw[->,cof] (X)  to node[below left]  {$i$} (tA);

  \draw[->] (tA) to node[below right] {$w$} node[above left] {$\we$} (mA);
\end{ctikzpicture}
Then we have a commutative square:
\begin{ctikzpicture}
  \matrix[diagram]
  {
    |(LX)| \Lan_{\max}(X)         & |(A0)| A \\
    |(LA)| \Lan_{\max}(\tilde{A}) & |(A1)| A \\
  };

  \draw[->] (LX) to node[left]  {$\Lan_{\max}(i)$} (LA);
  \draw[->] (A0) to node[right] {$\id_A$} (A1);

  \draw[->] (LX) to node[above] {$f$} (A0);

  \draw[->] (LA) to node[above] {$\we$} node[below] {$\overline{w}$} (A1);
\end{ctikzpicture}
 where $\overline{w}$ is the transpose of $w$ and hence, by
\Cref{lem:equiv-for-Lan}, a weak equivalence. Thus (App2) is satisfied.
\end{proof}

\begin{lemma}\label{lem:DmDn-to-Dmn}
 The canonical map $D([k] \times [m]) \into D[k] \times D[m]$ induces an exact functor 
 \[\C^{D[k] \times D[m]}_\R \to \C^{D([k] \times [m])}_\R.\]
\end{lemma}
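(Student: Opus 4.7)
The plan is to apply \Cref{lem:exactness-criterion} to the canonical inclusion $f \colon D([k] \times [m]) \to D[k] \times D[m]$, which sends $([n], \varphi)$ to $(([n], \pi_1 \varphi), ([n], \pi_2 \varphi))$. So for each $i = ([n], \varphi)$, I need to factor the induced map of latching categories as a cofinal functor followed by a sieve.

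The first step is to identify the two latching categories concretely. Writing $f(i) = (([n], \varphi_1), ([n], \varphi_2))$, the category $\partial(D([k] \times [m]) \slice i)$ is the poset of non-identity injective order-preserving maps $\sigma \colon [n'] \into [n]$, while $\partial((D[k] \times D[m]) \slice f(i))$ is the poset of pairs $(\sigma_1, \sigma_2)$ of injective order-preserving maps into $[n]$ that are not both identities; the induced functor is the diagonal $\sigma \mapsto (\sigma, \sigma)$. I would then interpose the full subcategory $K \subseteq \partial((D[k] \times D[m]) \slice f(i))$ spanned by pairs $(\sigma_1, \sigma_2)$ with $\mathrm{im}(\sigma_1) \cup \mathrm{im}(\sigma_2) \subsetneq [n]$. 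The inclusion $K \into \partial((D[k] \times D[m]) \slice f(i))$ is a sieve, because the image-union condition is preserved under precomposition, and the diagonal lands in $K$ since any non-identity $\sigma$ has proper image.

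The heart of the argument is cofinality of the resulting functor $\partial(D([k] \times [m]) \slice i) \to K$. For $(\sigma_1, \sigma_2) \in K$ I would apply \cite[Thm.\ IX.3.1]{mac-lane:cwm}: a short calculation identifies the relevant comma category with the poset of strict $\tau \colon [n'] \into [n]$ whose image contains $S := \mathrm{im}(\sigma_1) \cup \mathrm{im}(\sigma_2)$, which is non-empty and connected because $S \subsetneq [n]$ makes the inclusion $S \into [n]$ (equipped with the order inherited from $[n]$) an initial object. The main obstacle is the bookkeeping here: the pairs $(\sigma_1, \sigma_2)$ may have genuinely distinct domains $[p_1] \neq [p_2]$, so I need to confirm that the comma-category morphisms depend only on images, which holds because all maps involved are injective. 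Once this is verified, \Cref{lem:exactness-criterion} yields the result.
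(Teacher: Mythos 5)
Your proposal is correct and follows essentially the same route as the paper: you invoke \Cref{lem:exactness-criterion}, interpose the same intermediate sieve (pairs whose images have union a proper subset of $[n]$, which is the paper's $L$ written in terms of maps rather than subsets), and prove cofinality by exhibiting the union of the two images as a least element of the relevant comma category, citing the same criterion from Mac Lane. The only difference is bookkeeping — the paper identifies objects of the latching categories with non-empty subsets of $[l]$ outright, which makes your careful remark about distinct domains unnecessary.
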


\begin{proof}
 We check that $D([k] \times [m]) \into D[k] \times D[m]$ satisfies the assumptions of the \Cref{lem:exactness-criterion}. Let $(\varphi, \psi) \colon [l] \to [k] \times [m]$; unpacking the definitions, we see that the latching categories are as follows:
 \begin{align*}\partial\big( D([k] \times [m])  \downarrow (\varphi, \psi)\big) &= \{ A \varsubsetneq [l]~|~ A \neq \varnothing\}, \\
  \partial\big( D[k] \times D[m] \downarrow (\varphi \times \psi)\big) &= \{ A \times B \varsubsetneq [l] \times [l]~|~ A, B \neq \varnothing\},
 \end{align*}
 and the induced map is given by $A \mapsto A \times A$. Let:
 \[ L:= \{ A \times B \subseteq [l] \times [l]~|~ A, B \neq \varnothing \text{ and } A \cup B \neq [l] \}. \]
 The inclusion $L \into \partial\big( D[k] \times D[m] \downarrow (\varphi \times \psi) \big)$ is easily seen to be a sieve; thus, it remains to show that $\partial\big( D([k] \times [m]) \downarrow(\varphi, \psi)\big) \into L$ is cofinal.
Given $A \times B \in L$, the slice category
 \[(A \times B) \downarrow \partial\big( D([k] \times [m]) \downarrow (\varphi, \psi)\big)\]
 is connected since it has the initial object given by $A \times B \into (A \cup B) \times (A \cup B)$ and the result then follows by \cite[Thm.\ IX.3.1]{mac-lane:cwm}.
\end{proof}

\begin{proof}[Proof of \Cref{prop:DmxDn-Dmxn}]
 Consider the following commutative diagram:
 \begin{ctikzpicture}
   \matrix[diagram]
   {
     |(S)| \Sd([k] \times [m]) & |(D)| D([k]\times [m]) & |(DD)| D[k] \times D[m] \\
     & |(i)| [k] \times [m] \\
   };

   \draw[->] (S) to node[above]      {$\circled{1}$} (D);
   \draw[->] (S) to node[below left] {$\circled{2}$} (i);

   \draw[->] (D) to node[above]       {$\circled{3}$} (DD);
   \draw[->] (i) to node[below right] {$\circled{4}$} (DD);

   \draw[->] (D) to (i);
 \end{ctikzpicture}
 By \cite[Lem.\ 3.18]{szumilo:two-models}, \circled{1} induces an equivalence; by \Cref{lem:SdP-P} so does \circled{2}. By \Cref{lem:Dm-to-m}, \circled{3} induces an equivalence, and hence, by 2-out-of-3, so does \circled{4}.
\end{proof}

Let $A \colon \Delta \to \hoCat$ be an adequate cosimplicial object and $\C$ a cofibration category. Define a simplicial set $\Nsub{A}\C$ by:
\[ (\Nsub{A}\C)_m := \left\{\text{homotopical, Reedy cofibrant diagrams } A[m] \to \C \right\}. \]

The reminder of the proof will proceed by introducing a criterion for a map of adequate cosimplicial objects $A \to B$ to induce a categorical equivalence $\Nsub{B}\C \to \Nsub{A}\C$ (\Cref{prop:frames-to-qcat}). We will then deduce the equivalence $\Nf(\C^{D[k]}_\R) \to (\Nf\C)^{\Delta[k]}$ by instantiating this criterion with $D([k] \times [-]) \to D[k] \times D[-]$ (\Cref{thm:compatibility-Dm-Delta-m}).

\begin{proposition}\label{prop:proto-to-qcat}
 For any adequate cosimplicial object $A$ and cofibration category $\C$, $\Nsub{A}\C$ is a quasicategory.
\end{proposition}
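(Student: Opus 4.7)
My plan is to reformulate inner horn filling as an object-lifting problem against an acyclic fibration of cofibration categories and then appeal to \Cref{lem:reedy-lifting}. As a first step, I would observe that simplicial maps $K \to \Nsub{A}\C$ correspond bijectively to homotopical Reedy cofibrant diagrams $A(K) \to \C$, where $A(K)$ denotes the left Kan extension of $A$ along the Yoneda embedding (as in \Cref{def:protoframe}). This correspondence uses that condition (1) of adequacy makes all the relevant restriction functors exact, so that Reedy cofibrancy assembles coherently from values on representables. Under this correspondence, an inner horn $h \from \horn{m,i} \to \Nsub{A}\C$ with $0 < i < m$ becomes a homotopical Reedy cofibrant diagram $Y \from A(\horn{m,i}) \to \C$, and filling $h$ amounts to extending $Y$ along $A(\horn{m,i}) \into A[m]$.

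Next, I would show that the restriction functor $R_i \from \C^{A[m]}_\R \to \C^{A(\horn{m,i})}_\R$ is an acyclic fibration of cofibration categories. Condition (3) of adequacy directly gives that $R_i$ is a weak equivalence; for the fibration part, I would verify that $A(\horn{m,i}) \into A[m]$ is a sieve between direct homotopical categories with finite latching categories, using condition (2) together with the fact that $\horn{m,i} \into \simp{m}$ is built by iterated pushouts of boundary inclusions (sieves being stable under such pushouts and under composition). Restriction along a sieve of this type is known to yield a fibration of Reedy cofibration categories within the framework of \cite{szumilo:two-models}.

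Finally, I would invoke \Cref{lem:reedy-lifting} with the trivial sieve $\emptyset \into \{\ast\}$ (of finite direct homotopical categories) and the acyclic fibration $R_i$, taking the ``diagram'' $\{\ast\} \to \C^{A(\horn{m,i})}_\R$ to pick out $Y$; the diagonal filler then provides an object $X \in \C^{A[m]}_\R$ with $R_i(X) = Y$, which translates back into the desired inner horn filler for $\Nsub{A}\C$. The principal obstacle is the middle step: confirming that $A(\horn{m,i}) \into A[m]$ is indeed a sieve and that restriction along it gives a fibration of cofibration categories. This requires a careful unpacking of how left Kan extension of $A$ interacts with monomorphisms of simplicial sets, going slightly beyond what conditions (2) and (3) explicitly state.
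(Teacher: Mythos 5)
Your proposal is correct and follows essentially the same route as the paper: the paper likewise observes that condition (2) makes $A(\horn{m,i}) \into A[m]$ a sieve, so that restriction is a fibration of cofibration categories (by \cite[Lem.\ 1.20]{szumilo:two-models}) which condition (3) upgrades to an acyclic fibration, and then solves the object-lifting problem $\varnothing \into [0]$ via \Cref{lem:reedy-lifting}. The step you flag as the principal obstacle is handled in the paper exactly as you suggest, so there is no gap.
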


\begin{proof}
 By (2), the inclusion $A(\horn{m,i}) \into A[m]$ is a sieve, and hence, by \cite[Lem.\ 1.20]{szumilo:two-models} the induced map $\C^{A[m]}_\R \to \C^{A(\horn{m,i})}_\R$ is a fibration for all $0 < i < m$. By (3), this fibration is acyclic. Thus, by \Cref{lem:reedy-lifting}, there exists a solution to the following lifting problem:
 \begin{ctikzpicture}
   \matrix[diagram]
   {
     |(e)| \varnothing & |(s)| \C^{A[m]}_\R \\
     |(p)| [0]         & |(h)| \C^{\horn{m,i}} \\
   };

   \draw[->] (e) to (s);
   \draw[->] (p) to (h);

   \draw[->,inj] (e) to (p);

   \draw[->,fib] (s) to node[right] {$\we$} (h);
 \end{ctikzpicture}
 This implies that $\Nsub{A}\C$ has fillers for all inner horns.
\end{proof}

\begin{proposition} \label{thm:JNf-CSS}
 Let $A \colon \Delta \to \hoCat$ be an adequate cosimplicial object. Then $\J\Nf(\C^{A[-]}_\R)$ is a complete Segal space.
\end{proposition}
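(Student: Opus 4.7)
The plan is to verify directly the three defining conditions of a complete Segal space. The key preliminary observation, which I would justify first, is that $\usp{W}(K) \iso \J\Nf(\C^{A(K)}_\R)$ for every simplicial set $K$: the cosimplicial object $A$ is defined as a left Kan extension along the Yoneda embedding and so preserves colimits; the assignment $\C^{(-)}_\R$ sends the resulting colimits of direct homotopical categories to limits of cofibration categories, where adequacy (2) (the sieve condition) ensures the required compatibility with the Reedy structure along the gluings; and both $\Nf$ (exact between fibration categories) and $\J$ (right adjoint to $\Kan \into \qCat$) preserve the limits in question.

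For Reedy fibrancy, adequacy (2) says $A(\bd\Delta[m]) \into A[m]$ is a sieve, so by \cite[Lem.\ 1.20]{szumilo:two-models} the induced functor $\C^{A[m]}_\R \to \C^{A(\bd\Delta[m])}_\R$ is a fibration of cofibration categories. Exactness of $\Nf$ yields an inner isofibration of quasicategories, and applying $\J$ produces a Kan fibration of Kan complexes, since an inner isofibration between quasicategories restricts to a Kan fibration on their maximal Kan subcomplexes.

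For the Segal condition, I would reduce to showing that $\C^{A[m]}_\R \to \C^{A(S[m])}_\R$ is a weak equivalence of cofibration categories, and then apply \Cref{cor:Nf-preserves-equiv} together with \Cref{prop:J-preserves-equiv} to obtain the required weak homotopy equivalence. The spine $S[m] \into \Delta[m]$ is inner anodyne, realized as iterated pushouts along inner horn inclusions $\horn{k,i} \into \Delta[k]$; adequacy (3) supplies an equivalence of cofibration categories at each horn, and a skeletal induction modeled on \Cref{lem:equiv-of-protoframes}, invoking the fibration category structure on cofibration categories (\Cref{thm:fib-cat-of-cof-cat}) so that pullbacks of weak equivalences along fibrations remain weak equivalences, combines these into the required spine equivalence. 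Completeness is immediate from adequacy (4): the map $\C^{A(E[1])}_\R \to \C^{A[0]}_\R$ is an equivalence of cofibration categories, so $\J\Nf$ sends it to a weak homotopy equivalence $\usp{W}(E[1]) \weq \usp{W}_0$; since $\Delta[0] \to E[1]$ admits a retraction, the canonical map $\usp{W}_0 \to \usp{W}(E[1])$ is likewise a weak homotopy equivalence.

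The main technical obstacle is the Segal step---the inductive assembly of the horn-inclusion equivalences of adequacy (3) into a spine-inclusion equivalence---which requires careful bookkeeping of the fibration category structure on cofibration categories to control how pushouts of simplicial sets translate into pullbacks of cofibration categories along fibrations, and how weak equivalences persist through this process.
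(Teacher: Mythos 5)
Your proof is essentially correct but follows a genuinely different route from the paper. You verify the three defining conditions of a complete Segal space directly in the ``spatial'' direction ($\usp{W}_m \to \usp{W}(\partial\Delta[m])$, $\usp{W}(S[m])$, $\usp{W}(E[1])$), whereas the paper invokes \Cref{prop:css-is-frame-in-qcat} and instead checks that $\J\Nf(\C^{A[-]}_\R)$ is a \emph{frame in $\qCat$}: Reedy fibrancy in the other (categorical) direction, i.e.\ that $\ucat{W}_n \to M_n\ucat{W}$ is an inner isofibration, plus homotopical constancy in $n$. The paper's conditions are checked by translating each horn/boundary lifting problem through \cite[Lem.\ 1.23]{szumilo:two-models} into a single Reedy lifting problem of the form ``sieve of direct categories against an acyclic fibration of cofibration categories,'' solved by \Cref{lem:reedy-lifting}; this sidesteps any object-level identification of $\usp{W}(K)$ and also explains why the weak equivalences $\C^{D\hat{[n']}}_\R \to \C^{D\hat{[n]}}_\R$ appear there but not in your argument. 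Your approach buys a more conceptual reading of the four adequacy conditions ((2) gives Reedy fibrancy, (3) the Segal condition, (4) completeness) at the cost of three points that need shoring up. First, the identification $\usp{W}(K) \iso \J\Nf(\C^{A(K)}_\R)$ together with naturality of the comparison maps is the real work, and it is most delicate for $K = E[1]$, which is infinite-dimensional: you need $\C^{A(-)}_\R$, $\Nf$ and $\J$ to commute with the resulting countable inverse limit of towers of fibrations, not just with the finite pullbacks arising from $\partial\Delta[m]$ and $S[m]$. Second, the step ``$\J$ of an inner isofibration of quasicategories is a Kan fibration'' is true but is not among the facts quoted in the paper, so it needs a citation to Joyal. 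Third, in the Segal step the relevant stability statement is that \emph{acyclic fibrations} are stable under pullback (an axiom of the fibration category of \Cref{thm:fib-cat-of-cof-cat}), not that weak equivalences pull back along fibrations; this suffices here because $A(\horn{k,i}) \into A[k]$ is a sieve, so the restriction functor of adequacy (3) is a fibration and hence an acyclic fibration. None of these is a fatal gap, but each requires an explicit argument that your sketch currently defers.
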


\begin{proof}
 By \Cref{prop:css-is-frame-in-qcat}, it suffices to show that $\J\Nf(\C^{A[-]}_\R)$ is a frame over $\ucat{\J\Nf(\C^{A[-]}_\R)}_0$ in Joyal's model structure. 
 
 We begin by checking that $\J\Nf(\C^{A[-]}_\R)$ is Reedy fibrant, i.e.\ for each $n \in \bbN$, the canonical map $\ucat{\J\Nf(\C^{A[-]}_\R)}_n \to M_n\ucat{\J\Nf(\C^{A[-]}_\R)}$ is an inner isofibration. 
First, let $0<i<m$ and consider the lifting problem:
\begin{ctikzpicture}
  \matrix[diagram]
  {
    |(h)| \horn{m,i} & |(X)|  \ucat{\J\Nf(\C^{A[-]}_\R)}_n  \\
    |(s)| \simp{m}   & |(MX)| M_n\ucat{\J\Nf(\C^{A[-]}_\R)} \\
  };

  \draw[->] (h) to (X);
  \draw[->] (s) to (MX);

  \draw[->,inj] (h) to (s);
  \draw[->,fib] (X) to (MX);
\end{ctikzpicture}
which, by \cite[Lem.\ 1.23]{szumilo:two-models} is equivalent to:
\begin{ctikzpicture}
  \matrix[diagram]
  {
    |(Db)| D \hat{\bdsimp{n}} & |(Cs)| \C^{A[m]}_\R \\
    |(Ds)| D \hat{[n]}        & |(Ch)| \C^{A(\horn{m,i})}_\R \\
  };

   \draw[->] (Db) to (Cs);
   \draw[->] (Ds) to (Ch);

   \draw[->,inj] (Db) to (Ds);

   \draw[->,fib] (Cs) to node[right] {$\we$} (Ch);
\end{ctikzpicture}
The latter admits a solution by \Cref{lem:reedy-lifting}. This implies that the map in question is an inner fibration.

An analogous argument (with condition (4) in place of (3)) shows that the map $\ucat{\J\Nf(\C^{A[-]}_\R)}_n \to M_n\ucat{\J\Nf(\C^{A[-]}_\R)}$ is also an isofibration.

It remains to show that $\J\Nf(\C^{A[-]}_\R)$ is homotopically constant, i.e.\ any simplicial operator $\varphi \colon [n] \to [n']$ induces a categorical equivalence 
$\varphi^* \colon \ucat{\J\Nf(\C^{A[-]}_\R)}_{n'} \to \ucat{\J\Nf(\C^{A[-]}_\R)}_n$. But since all simplicial operators factor as composites of face and degeneracy maps and the latter admit
sections, it suffices to verify it only for inclusions $[n] \into [n']$. We will verify that in this case $\varphi^*$ is in fact an acyclic fibration, i.e.\ every square of the form:
\begin{ctikzpicture}
  \matrix[diagram]
  {
    |(b)| \bdsimp{m} & |(np)| \ucat{\J\Nf(\C^{A[-]}_\R)}_{n'} \\
    |(s)| \simp{m}   & |(n)|  \ucat{\J\Nf(\C^{A[-]}_\R)}_n \\
  };

  \draw[->] (b) to (np);
  \draw[->] (s) to (n);

  \draw[->,inj] (b)  to (s);
  \draw[->,fib] (np) to (n);
\end{ctikzpicture}
admits a diagonal filler. Such a filler corresponds in turn to a lift in
\begin{ctikzpicture}
  \matrix[diagram]
  {
    |(Ab)| A(\bdsimp{m}) & |(np)| \C^{D\ha{[n']}}_\R \\
    |(As)| A[m]          & |(n)|  \C^{D\ha{[n]}}_\R \\
  };

   \draw[->] (Ab) to (np);
   \draw[->] (As) to (n);

   \draw[->,inj] (Ab) to (As);

   \draw[->,fib] (np) to node[right] {$\we$} (n);
\end{ctikzpicture}
which exists, by a similar argument, since $D\ha{[n]} \to D\ha{[n']}$ induces a weak equivalence $\C^{D\ha{[n']}}_\R \to \C^{D\ha{[n]}}_\R$ of cofibration categories.
\end{proof}

\begin{proposition}\label{prop:frames-to-qcat}
 Let $f \colon A \to B$ be a map of adequate cosimplicial objects such that for all $m \in \bbN$, the induced map $f_m^* \colon \C^{B[m]}_\R \to \C^{A[m]}_\R$ is an equivalence of cofibration
categories. Then $f^* \colon \Nsub{B}\C \to \Nsub{A}\C$ is an equivalence of quasicategories.
\end{proposition}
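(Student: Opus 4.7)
The plan is to lift $f^* \colon \Nsub{B}\C \to \Nsub{A}\C$ to a map of complete Segal spaces and then apply \Cref{lem:rezk-is-level-qcat}. Following the template of \Cref{def:bold-Nf}, for each adequate cosimplicial object $A$ I define a bisimplicial set $\mathbf{N}_A\C$ whose $(m,n)$-simplices are the homotopical, Reedy cofibrant diagrams $A([m] \times \hat{[n]}) \to \C$. Here $A$ is interpreted via its left Kan extension along the Yoneda embedding (as done implicitly in conditions (3) and (4) of \Cref{def:protoframe}), and the homotopical structure on $A([m] \times \hat{[n]})$ inherits additional weak equivalences from the invertibility of morphisms in $\hat{[n]}$. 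For $A = D$ this recovers $\bfNf\C$. Since $\hat{[0]} = [0]$ and hence $A([m] \times \hat{[0]}) = A[m]$, one reads off $\ucat{\mathbf{N}_A\C}_0 = \Nsub{A}\C$, and analogously for $B$; consequently $f^*$ extends to a map $\mathbf{N}_{f} \colon \mathbf{N}_B\C \to \mathbf{N}_A\C$ of bisimplicial sets whose restriction at $n = 0$ recovers $f^*$.

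The first step is to show that $\mathbf{N}_A\C$ is a complete Segal space, arguing essentially as in the proof of \Cref{thm:JNf-CSS}. The criteria from \Cref{prop:css-is-frame-in-qcat} reduce, via \cite[Lem.\ 1.23]{szumilo:two-models}, to Reedy lifting problems in cofibration categories of diagrams $\C^{A(-)}_\R$ of the sort handled by \Cref{lem:reedy-lifting}. The fibration structure of the comparison maps between these cofibration categories comes from adequacy condition (2) together with the stability of sieves under products with $\hat{[n]}$, while their acyclicity follows from adequacy conditions (3) and (4) combined with the same skeletal induction used in the proof of \Cref{lem:equiv-of-protoframes}.

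The second step is to show that $\mathbf{N}_{f}$ is a Rezk equivalence by verifying that it is a levelwise weak homotopy equivalence in the spatial direction. Running the skeletal induction from the proof of \Cref{lem:equiv-of-protoframes} on $f$, the hypothesis that $f_m^* \colon \C^{B[m]}_\R \to \C^{A[m]}_\R$ is a weak equivalence of cofibration categories for every $m$ propagates to the same conclusion for $\C^{B([m] \times \hat{[n]})}_\R \to \C^{A([m] \times \hat{[n]})}_\R$ at every pair $(m, n)$; applying \Cref{cor:Nf-preserves-equiv} and then \Cref{prop:J-preserves-equiv} then yields the claimed pointwise weak homotopy equivalence of Kan complexes.

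Finally, \Cref{lem:rezk-is-level-qcat} ensures $\ucat{\mathbf{N}_{f}}_n$ is a categorical equivalence of quasicategories for every $n$, and the case $n = 0$ recovers $f^*$ exactly. The principal technical hurdle lies in the first step: carefully verifying that the adequacy-type conditions transfer to the family of cosimplicial objects $[m] \mapsto A([m] \times \hat{[n]})$ parametrized by $n$---in particular, that multiplication by $\hat{[n]}$ respects the sieves, fibrations of cofibration categories of diagrams, and the weak equivalences needed for the skeletal induction.
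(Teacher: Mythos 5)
Your overall strategy is the same as the paper's: realize $\Nsub{A}\C$ and $\Nsub{B}\C$ as zeroth rows of complete Segal spaces, observe that the induced map of bisimplicial sets is a levelwise weak homotopy equivalence in the spatial direction (by \Cref{cor:Nf-preserves-equiv} and \Cref{prop:J-preserves-equiv} applied to the hypothesis on $f_m^*$), and conclude via \Cref{lem:rezk-is-level-qcat}. However, there is a genuine gap at the very first step: the object $\mathbf{N}_A\C$ you build the argument on is not defined for a general adequate cosimplicial object $A$. An adequate $A$ is only a functor $\Delta \to \hoCat$; its left Kan extension along the Yoneda embedding produces $A(K)$ for simplicial sets $K$, but $[m] \times \hat{[n]}$ is a homotopical category, not a simplicial set, and there is no canonical value $A([m]\times\hat{[n]})$ (for $A = D$ the paper uses that $D$ is defined on all homotopical categories, which is extra structure not available for general $A$). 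Even granting some ad hoc definition via $A(\Delta[m]\times\Delta[n])$ with added weak equivalences, the verification that the result is a complete Segal space cannot ``follow from adequacy conditions (3) and (4)'' as you assert: those conditions only control $A$ in the $m$-direction (horns and $E[1]$ in $\Delta$), whereas homotopy constancy of your bisimplicial set in the $n$-direction requires that $[n]\into[n']$ induce equivalences $\C^{A([m]\times\hat{[n']})}_\R \to \C^{A([m]\times\hat{[n]})}_\R$, a statement about how $A$ interacts with the marked direction that the axioms of \Cref{def:protoframe} simply do not address. You correctly flag this as the ``principal technical hurdle,'' but it is not a routine verification---it is the missing content.

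The paper circumvents this by taking the complete Segal space to be $\J\Nf(\C^{A[-]}_\R)$, i.e.\ by using $A$ only in the spatial direction and the fixed functor $D\hat{[-]}$ (whose homotopical properties are already known) in the categorical direction; this is exactly what \Cref{thm:JNf-CSS} supplies. The price is that the zeroth row $\ucat{\J\Nf(\C^{A[-]}_\R)}_0$ is no longer equal to $\Nsub{A}\C$ on the nose (its $m$-simplices are diagrams on $A[m]\times D[0]$, not on $A[m]$), so one must insert an extra comparison: the map $\ucat{\J\Nf(\C^{A[-]}_\R)}_0 \to \Nsub{A}\C$ induced by $[0]\into D[0]$ is shown to be an acyclic fibration using \Cref{lem:Dm-to-m}, adequacy condition (2), and \Cref{lem:reedy-lifting}. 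With that replacement your argument goes through verbatim; without it, the construction underlying your proof does not exist.
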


\begin{proof}
 First, notice that for any adequate cosimplicial object $A \colon \Delta \to \hoCat$, the canonical map $\ucat{\J\Nf(\C^{A[-]}_\R)}_0 \to \Nsub{A}\C$, induced by the inclusion $[0] \into D[0]$, is an acyclic fibration. Indeed, the lifting problem:
 \begin{ctikzpicture}
   \matrix[diagram]
   {
     |(b)| \bdsimp{m} & |(S)| \ucat{\J\Nf(\C^{A[-]}_\R)}_0 \\
     |(s)| \simp{m}   & |(Q)| \Nsub{A}\C \\
   };

  \draw[->] (b) to (S);
  \draw[->] (s) to (Q);

  \draw[->,inj] (b) to (s);

  \draw[->] (S) to (Q);
 \end{ctikzpicture}
corresponds to
\begin{ctikzpicture}
  \matrix[diagram]
  {
    |(Ab)| A(\bdsimp{m}) & |(CD)| \C^{D[0]}_\R \\
    |(As)| A[m]          & |(C)|  \C \\
  };

   \draw[->] (Ab) to (CD);
   \draw[->] (As) to (C);

   \draw[->,inj] (Ab) to (As);

   \draw[->,fib] (CD) to node[right] {$\we$} (C);
\end{ctikzpicture}
which has a solution by \Cref{lem:reedy-lifting}, since $[0] \into D[0]$ induces an acyclic fibration of categories of diagrams by \Cref{lem:Dm-to-m} and by (2), $A(\partial \Delta[m]) \into A[m]$ is a sieve.
Thus the vertical maps in the commutative square
\begin{ctikzpicture}
  \matrix[diagram]
  {
    |(SA)| \ucat{\J\Nf(\C^{A[-]}_\R)}_0 & |(SB)| \ucat{\J\Nf(\C^{B[-]}_\R)}_0 \\
    |(QA)| \Nsub{A}\C                   & |(QB)| \Nsub{B}\C \\
  };

  \draw[->] (SA) to (SB);
  \draw[->] (QA) to (QB);

  \draw[->] (SA) to node[left]  {$\we$} (QA);
  \draw[->] (SB) to node[right] {$\we$} (QB);
\end{ctikzpicture}
are categorical equivalences and so, by 2-out-of-3, it suffices to show that so is the top horizontal map. This, however, follows by \Cref{lem:rezk-is-level-qcat} from our 
assumption on $f_m^*$ since $\J\Nf$ carries equivalences of cofibration categories to weak homotopy equivalences of Kan complexes by \Cref{prop:J-preserves-equiv,cor:Nf-preserves-equiv}. 
\end{proof}

\begin{theorem} \label{thm:compatibility-Dm-Delta-m}
 For any cofibration category $\C$ and $k \in \bbN$, the canonical map
\[ \Nf(\C^{D[k]}_\R) \to (\Nf\C)^{\Delta[k]} \]
is a categorical equivalence.
\end{theorem}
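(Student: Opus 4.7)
The plan is to invoke \Cref{prop:frames-to-qcat} for the natural inclusion $f \colon D([k] \times [-]) \into D[k] \times D[-]$ of cosimplicial homotopical categories. By \Cref{cor:Dkx-protoframe} and \Cref{lem:DkxD-protoframe}, both cosimplicial objects are adequate, and \Cref{prop:DmxDn-Dmxn} says that the induced functor $f_m^* \colon \C^{D[k] \times D[m]}_\R \to \C^{D([k] \times [m])}_\R$ is a weak equivalence of cofibration categories for every $m \in \bbN$. Hence the hypotheses of \Cref{prop:frames-to-qcat} are satisfied, yielding a categorical equivalence
\[ f^* \colon \Nsub{D[k] \times D[-]}\C \weq \Nsub{D([k] \times [-])}\C. \]

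It then remains to identify the source and target of $f^*$ with the quasicategories appearing in the statement. For the source, an $m$-simplex of $\Nsub{D[k] \times D[-]}\C$ is a homotopical Reedy cofibrant diagram $D[k] \times D[m] \to \C$; by \Cref{lem:curry-reedy} such diagrams correspond, naturally in $[m]$, to homotopical Reedy cofibrant diagrams $D[m] \to \C^{D[k]}_\R$, i.e.\ to $m$-simplices of $\Nf(\C^{D[k]}_\R)$. For the target, an $m$-simplex of $\Nsub{D([k] \times [-])}\C$ is a homotopical Reedy cofibrant diagram $D([k] \times [m]) \to \C$; since $D$ extends to simplicial sets as the left Kan extension of $D \colon \Delta \to \hoCat$ along the Yoneda embedding, and $\Delta[k] \times \Delta[m]$ is the nerve of $[k] \times [m]$, these are the same as Reedy cofibrant diagrams $D(\Delta[k] \times \Delta[m]) \to \C$, which by \cite[Prop.\ 3.7]{szumilo:two-models} (the unlabeled proposition preceding \Cref{prop:reedy-vs-levelwise}) correspond to simplicial maps $\Delta[k] \times \Delta[m] \to \Nf\C$, i.e.\ to $m$-simplices of $(\Nf\C)^{\Delta[k]}$.

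The final step is a naturality check confirming that under these identifications $f^*$ realizes the canonical comparison map of the theorem. The substantive content has already been carried out in \Cref{prop:DmxDn-Dmxn} (whose proof in turn relies on \Cref{lem:SdP-P} and the cofinality arguments of \Cref{lem:DmDn-to-Dmn}), so no further serious obstacle remains; the principal source of potential friction is simply bookkeeping the various bijections of $m$-simplices carefully enough to see that the canonical map of the theorem is the composite of the three identifications with $f^*$ inserted in the middle.
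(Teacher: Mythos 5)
Your proposal is correct and follows the same route as the paper: both apply \Cref{prop:frames-to-qcat} to the comparison $D([k]\times[-]) \to D[k]\times D[-]$, using \Cref{lem:DkxD-protoframe}, \Cref{cor:Dkx-protoframe}, and \Cref{prop:DmxDn-Dmxn} to verify its hypotheses. The only difference is that you spell out the identifications $\Nsub{D[k]\times D[-]}\C \cong \Nf(\C^{D[k]}_\R)$ and $\Nsub{D([k]\times[-])}\C \cong (\Nf\C)^{\Delta[k]}$, which the paper simply asserts.
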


\begin{proof}
 Consider adequate cosimplicial objects $A = D([k] \times [-])$ and $B = D[k] \times D[-]$ (see \Cref{cor:Dkx-protoframe,lem:DkxD-protoframe}). By \Cref{prop:frames-to-qcat}, the canonical map $\Nsub{B}\C \to \Nsub{A}\C$ is a categorical equivalence. This, however, completes the proof since $\Nsub{B}\C = \Nf(\C^{D[k]}_\R)$ and $\Nsub{A}\C = (\Nf\C)^{\Delta[k]}$.
\end{proof}

\begin{corollary}
 For any $K \in \sSet$, there is a natural categorical equivalence $\Nf(\C^{DK}_\R) \to (\Nf\C)^K$.
\end{corollary}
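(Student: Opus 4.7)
My plan is to reduce to \Cref{thm:compatibility-Dm-Delta-m} via a cell-by-cell induction on $K$, combined with a colimit argument in the infinite case.

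The central observation is that both functors $K \mapsto \Nf(\C^{DK}_\R)$ and $K \mapsto (\Nf\C)^K$ send pushouts of simplicial sets along monomorphisms to pullbacks of quasicategories along inner isofibrations, and the comparison map is natural with respect to this structure. For $(\Nf\C)^K$ this is immediate, since $(-)^K$ is an internal hom of simplicial sets and $\Nf\C$ is a quasicategory, so the restriction along a cofibration of simplicial sets is an inner isofibration. For $\Nf(\C^{DK}_\R)$ the argument chains three steps: (i) $D$ preserves colimits, being a left Kan extension along the Yoneda embedding, and carries monomorphisms of simplicial sets to sieves of direct homotopical categories; (ii) restriction along a sieve is a fibration of cofibration categories by \cite[Lem.~1.20]{szumilo:two-models}, so $\C^{D(-)}_\R$ turns the pushout in $\sSet$ into a pullback along a fibration in the fibration category of cofibration categories (\Cref{thm:fib-cat-of-cof-cat}); (iii) $\Nf$, being exact, preserves pullbacks along fibrations and sends fibrations of cofibration categories to inner isofibrations of quasicategories.

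I would then induct on the number of non-degenerate simplices of a finite simplicial set $K$. The base case $K = \emptyset$ is trivial. For the inductive step, write $K = L \cup_{\bdsimp{n}} \simp{n}$ and apply the gluing lemma for categorical equivalences along inner isofibrations: by the inductive hypothesis the comparison map is a categorical equivalence on $L$ and $\bdsimp{n}$, and on $\simp{n}$ it is the content of \Cref{thm:compatibility-Dm-Delta-m}; hence it is a categorical equivalence on the pullback, which is exactly $\Nf(\C^{DK}_\R) \to (\Nf\C)^K$.

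For arbitrary $K$, writing $K$ as the filtered colimit of its finite subcomplexes turns both sides of the comparison into cofiltered limits of quasicategories along towers of inner isofibrations, and categorical equivalences are stable under such limits. I expect the main obstacle to be the verification that the pullback computed in cofibration categories on the left truly corresponds under $\Nf$ to the pullback in $\qCat$ used on the right; this is controlled by exactness of $\Nf$ together with the observation that Reedy cofibrancy over $DK = DL \cup_{DM} DN$ with $DM \hookrightarrow DL$ a sieve can be checked by restriction to $DL$ and $DN$, since the latching categories are computed locally in each sieve.
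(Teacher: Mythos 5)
Your approach is the paper's own: the entire proof given there reads ``Induction on skeleta with the base case given by \Cref{thm:compatibility-Dm-Delta-m}'', and your elaboration of the cellular step --- $D$ preserves colimits and carries monomorphisms to sieves, restriction along a sieve is a fibration of cofibration categories, $\Nf$ is exact and hence turns the resulting pullback along a fibration into a pullback of quasicategories along an inner isofibration, and the gluing lemma in the fibration category $\qCat$ finishes the inductive step --- is exactly what that one-line proof is suppressing. Your closing remark that Reedy cofibrancy over $DK = DL \cup_{DM} DN$ is detected on the two sieves $DL$ and $DN$ is also the right justification for the pullback identification of $\C^{DK}_\R$.

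The one step I would adjust is the passage to infinite $K$. You pass to the cofiltered limit over all finite subcomplexes and assert that categorical equivalences are stable under such limits; but arbitrary cofiltered limits of diagrams of quasicategories are not homotopy limits merely because all the maps in the diagram are inner isofibrations (that condition gives Reedy fibrancy for towers, not for a general cofiltered index category), so invariance of the limit under levelwise categorical equivalence is not justified as stated. The standard repair --- and what ``induction on skeleta'' literally means --- is to filter $K$ by its skeleta: each $\operatorname{sk}_n K$ is obtained from $\operatorname{sk}_{n-1}K$ by a single pushout attaching a coproduct of copies of $\simp{n}$ along $\bdsimp{n}$ (so you additionally need that both functors carry coproducts to products, and that products of categorical equivalences between quasicategories are categorical equivalences), and $K = \colim_n \operatorname{sk}_n K$ is a sequential colimit, so the corresponding limit on both sides is the limit of a tower of inner isofibrations, where invariance does hold. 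With that substitution your argument is complete and coincides with the paper's.
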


\begin{proof}
 Induction on skeleta with the base case given by \Cref{thm:compatibility-Dm-Delta-m}.
\end{proof}

\section{Quasicategory of frames implements simplicial localization}\label{sec:comp-with-classification}

In this section, we prove that the enhancement of the quasicategory of frames of
a cofibration category to a complete Segal space of \Cref{def:bold-Nf}
is equivalent to the classification diagram of Rezk.

\begin{theorem}\label{thm:Nf-and-Ncd}
 For a cofibration category $\C$, the bisimplicial sets $\Ncd\C$ and $\bfNf\C$ are levelwise equivalent and hence Rezk equivalent.
\end{theorem}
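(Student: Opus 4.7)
The plan is to establish the theorem by showing, for each $m \in \bbN$, that the simplicial set $\usp{(\bfNf\C)}_m$ is weakly equivalent to $\usp{(\Ncd\C)}_m$. This suffices because, as recalled in the paragraph preceding \Cref{prop:css-is-frame-in-qcat}, every levelwise weak homotopy equivalence of bisimplicial sets is automatically a Rezk equivalence.

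For the row of $\bfNf\C$, I would first invoke \Cref{rmk:jt-qcat-to-css} to identify $\usp{(\bfNf\C)}_m = \J((\Nf\C)^{\Delta[m]})$. Combining \Cref{thm:compatibility-Dm-Delta-m} with \Cref{prop:J-preserves-equiv} then yields a weak equivalence $\usp{(\bfNf\C)}_m \simeq \J\Nf(\C^{D[m]}_\R)$. Next, \Cref{lem:Dm-to-m} (stating that $p \colon D[m] \to [m]$ induces weak equivalences of cofibration categories of diagrams for both Reedy and levelwise structures) together with \Cref{prop:reedy-vs-levelwise}.(3) provides a zigzag of equivalences of cofibration categories between $\C^{[m]}$ and $\C^{D[m]}_\R$ through $\C^{D[m]}$ with its levelwise structure. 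Applying \Cref{cor:Nf-preserves-equiv} and \Cref{prop:J-preserves-equiv} then gives $\J\Nf(\C^{D[m]}_\R) \simeq \J\Nf(\C^{[m]})$, so $\usp{(\bfNf\C)}_m \simeq \J\Nf(\C^{[m]})$. Since $\usp{(\Ncd\C)}_m = \N\w(\C^{[m]})$ by definition, the theorem reduces to the general claim that for any cofibration category $\D$ there is a natural weak homotopy equivalence $\N\w\D \weto \J\Nf\D$.

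The main obstacle is establishing this last equivalence. First, one identifies $(\J\Nf\D)_n$ with the set of Reedy cofibrant homotopical diagrams $D\hat{[n]} \to \D$: an edge of $\Nf\D$ is an equivalence precisely when the underlying morphism in $\D$ is a weak equivalence (the companion map $X_1 \weto X_{01}$ is already a weak equivalence by homotopicality of the diagram), so an $n$-simplex lies in $\J\Nf\D$ exactly when the corresponding diagram sends every arrow to a weak equivalence, i.e.\ is homotopical on $D\hat{[n]}$. A natural comparison map $\N\w\D \to \J\Nf\D$ can then be produced by sending a chain $X_0 \weto \cdots \weto X_n$ to a Reedy cofibrant resolution on $D\hat{[n]}$ built inductively on the degree, factoring each latching map as a cofibration followed by a weak equivalence in the spirit of \Cref{lem:extend-reedy}. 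Although this construction is not strictly functorial, the choice is well-defined up to weak equivalence. The verification that the resulting map is a weak homotopy equivalence is the heart of the matter and I expect it to proceed by an inductive lifting argument modeled on the proof of \Cref{thm:JNf-CSS}, invoking the Reedy right lifting property of acyclic fibrations (\Cref{lem:reedy-lifting}) and the approximation properties of \Cref{thm:approximation-properties}, and ultimately relying on the fibration-categorical structure on cofibration categories from \Cref{thm:fib-cat-of-cof-cat}.
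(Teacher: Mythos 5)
Your reduction is exactly the paper's: identify $\usp{(\bfNf\C)}_m$ with $\J\bigl((\Nf\C)^{\Delta[m]}\bigr)$, use \Cref{thm:compatibility-Dm-Delta-m}, \Cref{lem:Dm-to-m}, \Cref{prop:reedy-vs-levelwise}, \Cref{cor:Nf-preserves-equiv} and \Cref{prop:J-preserves-equiv} to pass to $\J\Nf(\C^{[m]})$, and thereby reduce everything to a single claim: for any cofibration category $\D$, the simplicial sets $\N\w\D$ and $\J\Nf\D$ are weakly homotopy equivalent. Your identification of the simplices of $\J\Nf\D$ as the homotopical Reedy cofibrant diagrams $D\hat{[n]} \to \D$ is also correct and is used verbatim in the paper.

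The gap is in how you propose to realize that last equivalence. You want a map $\N\w\D \to \J\Nf\D$ sending a chain of weak equivalences to a Reedy cofibrant resolution of the corresponding diagram on $D\hat{[n]}$, and you concede the construction ``is not strictly functorial'' but assert the choice is ``well-defined up to weak equivalence.'' That is not enough: to compare simplicial sets you need an actual simplicial map (or a zig-zag of them), and Reedy cofibrant replacement cannot be chosen compatibly with all face and degeneracy operators, so no such map exists by this recipe. ``Well-defined up to weak equivalence'' does not produce a morphism of simplicial sets on which one could then run a lifting argument. The paper resolves precisely this obstruction by interpolating a third object, the fattened $\Ex$ functor $\EX\N\w\D$, whose $n$-simplices are \emph{all} homotopical diagrams $D\hat{[n]} \to \D$ with no cofibrancy condition. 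Both comparison maps are then genuinely simplicial: $\N\w\D \to \EX\N\w\D$ is induced by $p \colon D[n] \to [n]$, and $\J\Nf\D \into \EX\N\w\D$ is the inclusion that simply forgets Reedy cofibrancy. The first is a weak equivalence by a bisimplicial diagonal argument (\Cref{lem:K-to-ExK-equiv}), and the second by the Vogt-style criterion of \Cref{lem:help-vogt} combined with \Cref{lem:extend-reedy} --- here your instinct to use Reedy cofibrant replacement rel the boundary is exactly right, but it appears as the solution of a lifting problem up to homotopy into the \emph{larger} simplicial set $\EX\N\w\D$, where the connecting natural weak equivalence $\tilde{X} \to X$ itself defines the required homotopy $D(\hat{[n]} \times \hat{[1]}) \to \w\D$; it is not used to define a map out of $\N\w\D$. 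Without this intermediate object (or some equivalent device), your argument does not get off the ground.
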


The proof of this theorem will be given at the end of the section and throughout we will gather the necessary notions and lemmas.

First off, we are going to need a fattened version of Kan's $\Ex$ functor
which we will denote by $\EX$. For a simplicial set $K$, we define
\begin{align*}
  (\EX K)_n & = \sSet(\N D[n], K) \text{.}
\end{align*}
Notice that by \cite[Lem.\ 3.6]{szumilo:two-models} and the definition of $\EX$, $D \colon \sSet \to \Cat$ is the left adjoint to the composite $\EX \N \colon \Cat \to \sSet$. Moreover, $\EX K$ comes equipped with a map $K \to \EX K$ induced by the functor $p \colon D[n] \to [n]$.

For a cofibration category $\D$, we will consider $\EX\N\w\D$ as an intermediate step in the comparison between $\N\w\D$ and $\J\Nf\D$, which in turn will yield an equivalence between $\N\w(\C^{[m]}_\R)$ and $\J\Nf(\C^{D[m]}_\R)$. Together with \Cref{thm:compatibility-Dm-Delta-m}, this will complete the proof of \Cref{thm:Nf-and-Ncd}.

\begin{lemma}\label{lem:K-to-ExK-equiv}
 For any simplicial set $K$, the map $K \to \EX K$ is a weak homotopy equivalence.
\end{lemma}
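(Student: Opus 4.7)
The map $\beta_K := (K \to \EX K)$ is, at level $n$, precomposition with $\nu_n := \N p_n \from \N D[n] \to \Delta[n]$, each of which is a weak homotopy equivalence by \Cref{lem:Dm-to-m}. My plan is to upgrade this levelwise weak equivalence of the cosimplicial simplicial set $\nu \from \N D \Rightarrow \Delta[-]$ to the desired global statement about $\beta_K$, first for Kan complexes $K$ via the Reedy model structure, and then in general by a functorial fibrant replacement argument.

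As a preliminary step I would verify that $\N D \from \Delta \to \sSet$ is Reedy cofibrant in the Reedy model structure on cosimplicial simplicial sets, by checking that each latching map $L_n \N D \to \N D[n]$ is a monomorphism via a direct combinatorial argument on the structure of the categories $D[n]$. Combined with the evident Reedy cofibrancy of $\Delta[-]$, this makes $\nu$ a Reedy weak equivalence between Reedy cofibrant cosimplicial simplicial sets. For $K$ a Kan complex, a standard argument in the homotopy theory of cosimplicial simplicial sets then yields that the induced simplicial-set map $K \cong \sSet(\Delta[-], K) \to \sSet(\N D, K) = \EX K$ is a weak equivalence (using that $\sSet(-,K)$ sends Reedy weak equivalences between Reedy cofibrant cosimplicial objects to weak equivalences whenever $K$ is a Kan complex). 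Hence $\beta_K$ is a weak equivalence for all Kan complexes $K$.

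For general $K$, fix a functorial fibrant replacement $r_K \from K \weto K^f$ into a Kan complex. By naturality of $\beta$, the resulting commutative square has $r_K$ and $\beta_{K^f}$ as weak equivalences, so by 2-out-of-3 it suffices to show that $\EX r_K$ is a weak equivalence. I would establish this by showing that $\EX$ preserves weak equivalences, exhibiting $\EX$ as the right adjoint of a left Quillen functor $F = \N D \from \sSet \to \sSet$ (the left Kan extension of $\N D \from \Delta \to \sSet$ along the Yoneda embedding, which preserves cofibrations and takes generating acyclic cofibrations to weak equivalences) and combining Ken Brown's lemma with the already-established Kan case to extend preservation of weak equivalences from Kan complexes to arbitrary simplicial sets. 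The principal obstacle is exactly this last step: while the Reedy preliminaries and the Kan case are routine, showing that $\EX$ preserves weak equivalences of arbitrary simplicial sets requires a careful model-categorical argument beyond the Reedy formalism.
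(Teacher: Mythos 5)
Your first half (the Kan-complex case via the Reedy model structure on cosimplicial simplicial sets) is fine in outline, but the reduction of the general case to it contains a genuine gap that your proposed tools cannot close. You want to show that $\EX r_K \colon \EX K \to \EX K^f$ is a weak equivalence by exhibiting $\EX$ as a right Quillen functor and invoking Ken Brown's lemma. But Ken Brown's lemma for a right Quillen functor only yields preservation of weak equivalences \emph{between fibrant objects}, and $K$ is an arbitrary simplicial set; there is no formal passage from ``preserves weak equivalences between Kan complexes'' to ``preserves all weak equivalences.'' Indeed, the statement ``$\EX$ preserves all weak equivalences'' is essentially equivalent to the lemma itself: granting the Kan case, naturality of $\beta$ and 2-out-of-3 show the two assertions imply one another. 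So your plan reduces the lemma to a claim of the same difficulty, which you yourself flag as the unresolved ``principal obstacle.'' (For Kan's classical $\Ex$ the analogous preservation statement is a theorem with real content, proved via the last-vertex map, not a consequence of a Quillen adjunction.)

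The paper avoids this issue entirely by exploiting that $p \colon D[m] \to [m]$ is a genuine \emph{homotopy equivalence} (\Cref{lem:Dm-to-m}), not merely a weak one: both $\EX$ and $K^{(-)}$ preserve simplicial homotopy equivalences for trivial reasons (no fibrancy or cofibrancy needed), since a homotopy $K \times \Delta[1] \to L$ induces $\EX K \times \Delta[1] \to \EX K \times \EX\Delta[1] \to \EX L$. One then forms the bisimplicial set with $(m,n)$-entries $\sSet(\N D[m] \times \Delta[n], K)$ mapping from the one with entries $\sSet(\Delta[m]\times\Delta[n],K)$, checks that it is a levelwise weak equivalence in each direction separately using these homotopy equivalences, and applies the diagonal lemma. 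If you want to salvage your argument, you should replace the Quillen-adjunction step by this observation (or prove directly, by some other means, that $\EX$ preserves weak equivalences of arbitrary simplicial sets); as it stands the proof is incomplete.
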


This lemma is an instance of \cite[Thm.\ 4.1]{latch-thomason-wilson} with $\theta = D$. For the reader's convenience, we present the specialization of their proof to our case.

\begin{proof}
We begin by noticing that $\EX$ preserves homotopies. Indeed, a homotopy $K \times \Delta[1] \to L$ gives a map 
\[\EX K \times \Delta[1] \to \EX K \times \EX \Delta[1] \to \EX L \]
as desired. Thus, $\EX$ also preserves homotopy equivalences. Similarly, $K^{(-)}$ preserves homotopy equivalences.
 
Now, consider the following commutative square:
\begin{ctikzpicture}
  \matrix[diagram]
  {
    |(i0)| \sSet(\Delta[m] \times \Delta[0], K) & |(in)| \sSet(\Delta[m] \times \Delta[n], K) \\
    |(D0)| \sSet(\N D[m] \times \Delta[0], K)   & |(Dn)| \sSet(\N D[m] \times \Delta[n], K) \\
  };

  \draw[->] (i0) to (in);
  \draw[->] (D0) to (Dn);

  \draw[->] (i0) to (D0);
  \draw[->] (in) to (Dn);
\end{ctikzpicture}
As $m$ and $n$ vary each of the objects becomes a (possibly constant) bisimplicial set.

 First, fix $n \in \bbN$. Then the square becomes:
 \begin{ctikzpicture}
   \matrix[diagram]
   {
     |(i0)| K^{\Delta[0]}      & |(in)| K^{\Delta[n]} \\
     |(E0)| \EX(K^{\Delta[0]}) & |(En)| \EX(K^{\Delta[n]}) \\
   };

   \draw[->] (i0) to node[above] {$\we$} (in);
   \draw[->] (E0) to node[above] {$\we$} (En);

   \draw[->] (i0) to (E0);
   \draw[->] (in) to (En);
 \end{ctikzpicture}
 in which:
 \begin{itemize}
  \item the top map $K^{\Delta[0]} \to K^{\Delta[n]}$ is a homotopy equivalence as the image of the homotopy equivalence $\Delta[n] \to \Delta[0]$ under $K^{(-)}$;
  \item the bottom map $\EX(K^{\Delta[0]}) \to \EX(K^{\Delta[n]})$ is a homotopy equivalence since $\EX$ preserves homotopy equivalences.
 \end{itemize}

 Next, fix $m \in \bbN$. Then the square becomes:
 \begin{ctikzpicture}
   \matrix[diagram]
   {
     |(b0)| \bullet & |(Ki)| K^{\simp{m}} \\
     |(b1)| \bullet & |(KD)| K^{\N D[m]} \\
   };

   \draw[->] (b0) to (b1);

   \draw[->] (Ki) to node[right] {$\we$} (KD);

   \draw[->] (b0) to (Ki);
   \draw[->] (b1) to (KD);
 \end{ctikzpicture}
 and the right hand side vertical map $K^{\Delta[m]} \to K^{\N(D[m])}$ is a homotopy equivalence as the image  under $K^{(-)}$ of $\N p \colon \N D[m] \to \Delta[m]$ which is a homotopy equivalence by \Cref{lem:Dm-to-m}.

 Consequently, applying the diagonal functor $\diag \colon \ssSet \to \sSet$ to this square yields:
 \begin{ctikzpicture}
   \matrix[diagram]
   {
     |(i)| K     & |(b0)| \bullet \\
     |(E)| \EX K & |(b1)| \bullet \\
   };

   \draw[->] (i) to (E);

   \draw[->] (b0) to node[right] {$\we$} (b1);

   \draw[->] (i) to node[above] {$\we$} (b0);
   \draw[->] (E) to node[above] {$\we$} (b1);
 \end{ctikzpicture}
 in which both horizontal and the right vertical map are weak equivalences by the Diagonal Lemma \cite[Thm.\ 4.1.9]{goerss-jardine}. Thus, by 2-out-of-3, $K \to \EX K$ is also a weak equivalence. \qedhere
\end{proof}

For our next argument, we will need an auxiliary lemma about the category of simplicial sets. Our statement is similar to the one proven by Vogt \cite{vogt:help}. Here, we only prove one implication, but under weaker assumptions. 

\begin{lemma}\label{lem:help-vogt}
 Let $f \colon K \to L$ be a map of simplicial sets. Suppose that for each $n \in \bbN$ and a square:
 \begin{ctikzpicture}
   \matrix[diagram]
   {
     |(b)| \bdsimp{n} & |(K)| K \\
     |(s)| \simp{n}   & |(L)| L \\
   };

   \draw[->,inj] (b) to (s);

   \draw[->] (K) to node[right] {$f$} (L);

   \draw[->] (b) to node[above] {$u$} (K);
   \draw[->] (s) to node[above] {$v$} (L);
 \end{ctikzpicture}
there are: a map $w \colon \Delta[n] \to K$ such that $w|\bdsimp{n} = u$ and a homotopy (respectively, an $E[1]$-homotopy) from $fw$ to $v$ relative to the boundary. Then $f$ is a weak homotopy equivalence (respectively, a categorical equivalence).

Moreover, if $L$ is a Kan complex (respectively, a quasicategory), then so is $K$. (Even though $f$ may not be a fibration.)
\end{lemma}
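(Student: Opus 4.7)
The plan is to handle the two assertions in turn, proving the ``moreover'' statement first so that once $K$ is known to be fibrant, the hypothesis becomes exactly the standard ``homotopy right lifting'' characterization of weak equivalences between fibrant objects in the Kan--Quillen (resp.\ Joyal) model structure on $\sSet$.

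For the ``moreover'' part, given a horn $h \colon \horn{n,i} \to K$ (with $0 \leq i \leq n$ in the Kan case, $0 < i < n$ in the quasicategorical case), I construct a filler in two stages. First, I extend $h$ to a map $h' \colon \bdsimp{n} \to K$ by supplying the missing $d_i$-face: its boundary $\bdsimp{n-1}$ is already mapped into $K$ by the appropriate faces of $h$, and composing with $f$, using that $L$ is Kan (resp.\ a quasicategory) to fill $fh$ to a simplex $\bar v \colon \simp{n} \to L$, and taking $\bar v \circ d_i$, yields an extension of that boundary in $L$; the hypothesis at dimension $n-1$ then lifts this extension back to $K$. Second, to fill the resulting $h' \colon \bdsimp{n} \to K$, observe that $fh'$ and $\bar v|\bdsimp{n}$ agree on $\horn{n,i}$ and, on the $d_i$-face, are homotopic rel $\bdsimp{n-1}$ by construction. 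Gluing with the constant homotopy on $\horn{n,i}$ gives a homotopy $H \colon \bdsimp{n} \times J \to L$ from $\bar v|\bdsimp{n}$ to $fh'$ that is constant on $\horn{n,i}$, with $J = \simp{1}$ in the Kan case and $J = E[1]$ in the quasicategorical case. Since the pushout-product inclusion $(\bdsimp{n} \times J) \cup (\simp{n} \times \{0\}) \hookrightarrow \simp{n} \times J$ is anodyne (resp.\ a trivial cofibration in Joyal's model structure, by cartesian closedness), I extend $H$ together with $\bar v$ to a map $\simp{n} \times J \to L$ and read off the opposite end as a modified filler $\bar v'$ satisfying $\bar v'|\bdsimp{n} = fh'$. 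Applying the hypothesis at dimension $n$ with $u = h'$ and $v = \bar v'$ then produces a filler $\tilde h \colon \simp{n} \to K$ of $h'$, which restricts to $h$ on $\horn{n,i}$.

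For the main claim, with both $K$ and $L$ fibrant, the hypothesis now says exactly that $f$ has the homotopy right lifting property against every generating cofibration $\bdsimp{n} \hookrightarrow \simp{n}$. In the Kan case, this yields bijectivity of $\pi_n(f)$ at every basepoint: surjectivity by applying the hypothesis with $u$ constant, and injectivity by a standard cylinder argument gluing two representatives over $\bdsimp{n+1}$ and invoking the hypothesis one dimension up. Together with surjectivity of $\pi_0(f)$ (the case $n=0$, with $u$ empty), the Whitehead theorem for Kan complexes identifies $f$ as a weak homotopy equivalence. The Joyal case is analogous with $E[1]$-homotopies in place of $\simp{1}$-homotopies, and shows that $f$ induces equivalences of all mapping spaces together with a bijection on isomorphism classes of the homotopy category, hence is a categorical equivalence.

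The main obstacle is the careful bookkeeping of homotopies in the first step, in particular ensuring that the reduction of horn-filling to boundary-filling produces homotopies that glue compatibly on the shared $(n-1)$-face and remain constant on $\horn{n,i}$ after rectification. In the quasicategorical variant, the crucial additional ingredient is the cartesian closedness of Joyal's model structure, which is what guarantees that the appropriate pushout-product with $\{0\} \hookrightarrow E[1]$ remains a trivial cofibration so that the rectification step can be carried out inside the quasicategory $L$.
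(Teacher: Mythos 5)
Your proof of the ``moreover'' clause and of the weak-homotopy-equivalence case is essentially correct, but it follows a much longer path than necessary, and the categorical-equivalence case is left as an unproved assertion. The key observation you are missing --- and the one the paper's proof is built on --- is that the class of cofibrations $A \into B$ enjoying the stated homotopy-lifting property against $f$ is closed under coproducts, pushouts and sequential colimits, and therefore contains \emph{all} cofibrations of simplicial sets, not only the boundary inclusions. This does all the work at once. It gives the ``moreover'' part in one line: for a horn $u \colon \Lambda^i[n] \to K$, fill $fu$ in $L$ to some $v$ and apply the property to $\Lambda^i[n] \into \Delta[n]$ to obtain $w$ with $w|\Lambda^i[n] = u$; no two-stage rectification of the missing face is needed. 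It also gives the main claim without homotopy groups: applying the property to $\varnothing \into L$ yields $g \colon L \to K$ together with a homotopy $H$ from $fg$ to $\id_L$, and applying it to the cofibration $K \sqcup K \into K \times \Delta[1]$ (resp.\ $K \times E[1]$) with top map $[\id_K, gf]$ and bottom map $Hf$ yields a homotopy between $\id_K$ and $gf$. Hence $f$ is a simplicial (resp.\ $E[1]$-)homotopy equivalence, and so a weak homotopy equivalence (resp.\ a categorical equivalence); in this argument the two cases really are verbatim parallel.

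The genuine gap is in your treatment of the Joyal case. Your Kan-case argument runs through $\pi_n$ and the simplicial Whitehead theorem, and that is fine as far as it goes; but the claim that the quasicategorical case ``is analogous'' and ``shows that $f$ induces equivalences of all mapping spaces together with a bijection on isomorphism classes'' is precisely the statement that needs proof, and it does not follow by substituting $E[1]$ for $\Delta[1]$ in a homotopy-group computation. Mapping spaces of a quasicategory are not read off from fillings of $\bd\Delta[n] \into \Delta[n]$ the way homotopy groups of a Kan complex are, so deducing full faithfulness from the $E[1]$-homotopy lifting property against boundary inclusions requires lifting against other cofibrations --- which again forces you to saturate the lifting class first, at which point the direct construction of an $E[1]$-homotopy inverse sketched above is both shorter and uniform in the two cases. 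As written, the categorical-equivalence conclusion is asserted rather than proved.
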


\begin{proof}
 We prove the lemma for weak homotopy equivalences; the proof of categorical equivalences in analogous.

The class of cofibrations $A \to B$ satisfying the lifting property with respect to $f$:
 \begin{ctikzpicture}
   \matrix[diagram]
   {
     |(A)| A & |(K)| K \\
     |(B)| B & |(L)| L \\
   };

   \draw[->,inj] (A) to (B);

   \draw[->] (K) to node[right] {$f$} (L);

   \draw[->] (A) to node[above] {$u$} (K);
   \draw[->] (B) to node[above] {$v$} (L);
 \end{ctikzpicture}
as in the statement of the lemma is closed under (infinite) coproducts, pushouts, and sequential colimits. Thus this lifting property is satisfied by all cofibrations, not only the boundary inclusions. In particular, we can use it for the horn inclusions to see that $K$ is a Kan complex, provided that $L$ is.

Using it with the inclusion $\varnothing \into L$, we obtain a map $g \colon L \to K$ along with a homotopy $H$ from $fg$ to $\id_L$. Consequently, we have a lift in the square:
\begin{ctikzpicture}
  \matrix[diagram]
  {
    |(KK)| K \sqcup K        & |(K)| K \\
    |(Ks)| K \times \simp{1} & |(L)| L \\
  };

  \draw[->,inj] (KK) to (Ks);

  \draw[->] (K) to node[right] {$f$} (L);

  \draw[->] (KK) to node[above] {$[\id_K,gf]$} (K);
  \draw[->] (Ks) to node[below] {$H f$} (L);

  \draw[->,dashed] (Ks) to node[above left] {$G$} (K);
\end{ctikzpicture}
and the commutativity of the upper triangle means that $G$ is a homotopy from $gf$ to $1_K$.
\end{proof}

Next, observe that, for any cofibration category $\D$, the $n$-simplices of the Kan complex $\J\nf\cat{D}$ are the homotopical, Reedy cofibrant diagrams $D\hat{[n]} \to \D$, whereas the $n$-simplices of $\EX \N \w\D$ are all homotopical diagrams $D\hat{[n]} \to \D$. We thus obtain an inclusion $\J\nf\cat{D} \into \EX \N \w\cat{D}$.

\begin{lemma}\label{lem:JNf-to-ExN-equiv}
The inclusion $\J\nf\cat{D} \into \EX \N \w\cat{D}$ is a weak homotopy equivalence.
\end{lemma}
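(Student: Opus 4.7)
The plan is to apply \Cref{lem:help-vogt} (in its weak homotopy equivalence version) to the inclusion $\J\nf\cat{D} \into \EX\N\w\cat{D}$. Given a commutative square with $u \colon \bdsimp{n} \to \J\nf\cat{D}$ and $v \colon \simp{n} \to \EX\N\w\cat{D}$, the task is to produce an extension $\simp{n} \to \J\nf\cat{D}$ of $u$ together with a simplicial homotopy in $\EX\N\w\cat{D}$ from its image to $v$, relative to $\bdsimp{n}$. Translating via the adjunctions $D \dashv \nf$ and $D \dashv \EX\N$, the square corresponds to a Reedy cofibrant homotopical diagram $X \colon D\hat{\bdsimp{n}} \to \cat{D}$ together with a homotopical diagram $Y \colon D\hat{\simp{n}} \to \cat{D}$ extending $X$.

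For the extension, note that $D\hat{\bdsimp{n}} \into D\hat{\simp{n}}$ is a sieve between direct homotopical categories, inherited from the monomorphism $\bdsimp{n} \into \simp{n}$ of simplicial sets. Thus \Cref{lem:extend-reedy} yields a Reedy cofibrant homotopical $\tilde{Y} \colon D\hat{\simp{n}} \to \cat{D}$ together with a weak equivalence $w \colon \tilde{Y} \to Y$ that restricts to the identity on $D\hat{\bdsimp{n}}$. In particular $\tilde{Y}$ extends $X$, which gives the required extension $\simp{n} \to \J\nf\cat{D}$ of $u$.

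The harder part is constructing the relative homotopy. The natural weak equivalence $w$ assembles into a functor $W \colon D\hat{\simp{n}} \times [1] \to \cat{D}$ that sends every morphism to a weak equivalence. Define a functor $\pi \colon D(\simp{n} \times \simp{1}) \to D\hat{\simp{n}} \times [1]$ by sending a simplex $(\alpha \colon [k] \to [n],\, \beta \colon [k] \to [1])$ of $\simp{n} \times \simp{1}$ to the pair $(([k], \alpha),\, \beta(k))$, with morphisms treated in the evident way---the order preservation of $\beta$ ensures that the requisite morphism in the $[1]$ factor exists. The composite $W \circ \pi$ sends every morphism to a weak equivalence and hence, via the $D \dashv \EX\N$ adjunction, corresponds to a simplicial map $H \colon \simp{n} \times \simp{1} \to \EX\N\w\cat{D}$. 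A direct inspection shows that $H$ restricts to $\tilde{Y}$ on $\simp{n} \times \{0\}$, to $Y$ on $\simp{n} \times \{1\}$, and, because $w$ is the identity on $D\hat{\bdsimp{n}}$, to the constant homotopy at $X$ on $\bdsimp{n} \times \simp{1}$---exactly the required relative homotopy. The main obstacle is the construction of the comparison functor $\pi$ and checking that the resulting $H$ has the correct boundary behavior; the remaining steps are routine applications of the cited lemmas.
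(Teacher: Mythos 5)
Your proposal is correct and follows essentially the same route as the paper: translate the lifting problem through the adjunctions, apply \Cref{lem:extend-reedy} to the sieve $D\hat{\partial\Delta[n]} \into D\hat{[n]}$ to get the Reedy cofibrant extension with a natural weak equivalence back to $Y$, and then turn that natural weak equivalence into the required relative homotopy via the comparison functor $D(\Delta[n]\times\Delta[1]) \to D\hat{[n]}\times\hat{[1]}$ and the adjunction $D \dashv \EX\N$. Your explicit description of the comparison functor $\pi$ and the boundary checks only spells out what the paper leaves implicit.
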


\begin{proof}
It suffices to solve the following lifting problem in the sense of \Cref{lem:help-vogt}:
\begin{ctikzpicture}
  \matrix[diagram]
  {
    |(b)| \bdsimp{n} & |(J)| \J\nf\cat{D} \\
    |(s)| \simp{n}   & |(E)| \EX \N\w\cat{D} \\
  };

  \draw[->] (b) to (s);
  \draw[->] (J) to (E);

  \draw[->] (b) to (J);
  \draw[->] (s) to (E);
\end{ctikzpicture}
A map  $X \colon \Delta[n] \to \EX \N \w{\D}$ corresponds to a homotopical functor $D\hat{[n]} \to \D$ and by commutativity of the square above, the restriction of $X$ to the boundary $\partial \Delta[n]$ is a Reedy cofibrant and homotopical functor:
 \[ D(\hat{\partial \Delta[n]}) \overset{X}{\longrightarrow} \J \Nf \D \subseteq \EX \N \w\D.\]
  Since $D(\hat{\partial \Delta[n]}) \into D[n]$ is a sieve, by \Cref{lem:extend-reedy}, we may find an extension $\widetilde{X}$ and a natural weak equivalence $\widetilde{X} \to X$. Such a natural weak equivalence is a diagram $D\hat{[n]} \times \hat{[1]} \to \w\D$ and the composite $D(\hat{[n]} \times \hat{[1]}) \to D\hat{[n]} \times \hat{[1]} \to \w\D$ gives the desired homotopy by adjunction $D \adjoint \EX \N$.
\end{proof}

\begin{proof}[Proof of \Cref{thm:Nf-and-Ncd}]
First, observe that for every $m$ we have equivalences of cofibration categories
\begin{ctikzpicture}
  \matrix[diagram]
  {
    |(m)|   \cat{C}^{[m]} & |(Dm)| \cat{C}^{D[m]} &
    |(DmR)| \cat{C}^{D[m]}_\Reedy \\
  };
  \draw[->] (m)   to node[above] {$\we$} (Dm);
  \draw[->] (DmR) to node[above] {$\we$} (Dm);
\end{ctikzpicture}
by \Cref{lem:Dm-to-m} and \Cref{prop:reedy-vs-levelwise}, which, by \Cref{prop:J-preserves-equiv,cor:Nf-preserves-equiv}, induce weak homotopy equivalences of simplicial sets
\begin{ctikzpicture}
  \matrix[diagram]
  {
    |(m)|  \J\nf(\cat{C}^{[m]}) & |(Dm)| \J\nf(\cat{C}^{D[m]}) &
    |(DmR)| \J\nf(\cat{C}^{D[m]}_\Reedy) \\
  };
  \draw[->] (m)   to node[above] {$\we$} (Dm);
  \draw[->] (DmR) to node[above] {$\we$} (Dm);
\end{ctikzpicture}

Moreover by \Cref{lem:K-to-ExK-equiv,lem:JNf-to-ExN-equiv}, for any cofibration category $\cat{D}$, we obtain weak homotopy equivalences:
\begin{ctikzpicture}
  \matrix[diagram]
  {
    |(N)| \N\w\cat{D} & |(E)| \EX \N\w\cat{D} & |(J)| \J\nf\cat{D} \\
  };

  \draw[->] (N) to node[above] {$\we$} (E);
  \draw[->] (J) to node[above] {$\we$} (E);
\end{ctikzpicture}

By specializing $\cat{D}$ to $\cat{C}^{[m]}$, $\cat{C}^{D[m]}$
and $\cat{C}^{D[m]}_\Reedy$ we obtain the rows of the diagram
\begin{ctikzpicture}
  \matrix[diagram]
  {
    |(N)|  \N\w(\cat{C}^{[m]})          & |(E)|  \EX \N\w(\cat{C}^{[m]})  & |(J)|  \J\nf(\cat{C}^{[m]}) \\
    |(ND)| \N\w(\cat{C}^{D[m]})         & |(ED)| \EX \N\w(\cat{C}^{D[m]}) & |(JD)| \J\nf(\cat{C}^{D[m]}) \\
    |(NR)| \N\w(\cat{C}^{D[m]}_\Reedy)  & |(ER)| \EX \N\w(\cat{C}^{D[m]}_\Reedy) &
    |(JR)| \J\nf(\cat{C}^{D[m]}_\Reedy) & |(S)| \J(\nf\cat{C})^{\simp{m}} \\
  };

  \draw[->] (N) to node[above] {$\we$} (E);
  \draw[->] (J) to node[above] {$\we$} (E);

  \draw[->] (ND) to node[above] {$\we$} (ED);
  \draw[->] (JD) to node[above] {$\we$} (ED);

  \draw[->] (NR) to node[below] {$\we$} (ER);
  \draw[->] (JR) to node[below] {$\we$} (ER);

  \draw[->] (JR) to node[below] {$\we$} (S);

  \draw[->] (N)  to (ND);
  \draw[->] (NR) to (ND);

  \draw[->] (E)  to (ED);
  \draw[->] (ER) to (ED);

  \draw[->] (J)  to node[right] {$\we$} (JD);
  \draw[->] (JR) to node[right] {$\we$} (JD);
\end{ctikzpicture}
where the bottom right map is a weak homotopy equivalence by \Cref{thm:compatibility-Dm-Delta-m,prop:J-preserves-equiv};
and so are the maps of the right column by the preceding discussion.
Therefore, all the maps in the diagram are weak homotopy equivalences.

The shortest zig-zag of weak homotopy equivalences connecting $\N(\w\cat{C}^{[m]})$
to $\J(\nf\cat{C})^{\simp{m}}$ that we can extract is
\begin{ctikzpicture}
  \matrix[diagram]
  {
    |(N)| \Ncd\C =  \N\w(\cat{C}^{[m]})          & |(ED)| \EX \N\w(\cat{C}^{D[m]}) &
    |(JR)| \J\nf(\cat{C}^{D[m]}_\Reedy) & |(S)| \J(\nf\cat{C})^{\simp{m}} = \bfNf\C \text{.} \qedhere \\
  };

  \draw[->] (N)  to node[above] {$\we$} (ED);
  \draw[->] (JR) to node[above] {$\we$} (ED);
  \draw[->] (JR) to node[above] {$\we$} (S);
\end{ctikzpicture} 
\end{proof}

The categories with weak equivalences $\sSet$ and $\ssSet$ admit model structures, known as Joyal's \cite[Thm.\ 6.12]{joyal:theory-of-quasi-cats} and Rezk's \cite[Thm.\ 7.2]{rezk:css} model structures, respectively. The functor $\ev_0 \colon \ssSet \to \sSet$ defined by $\ev_0(W) = \ucat{W}_0$  is a right Quillen functor and a Quillen equivalence \cite[Thm.\ 4.11]{joyal-tierney:qcat-vs-segal}. It follows that its right derived functor $\bbR\ev_0 \colon \ssSet \to \sSet$ exists and is a DK-equivalence.

\begin{corollary}\label{cor:Nf-and-evNcd}
 For any cofibration category $\C$, the quasicategories $\Nf\C$ and $(\bbR \ev_0) \Ncd \C$ are equivalent.
\end{corollary}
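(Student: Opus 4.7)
The plan is to exhibit $\bfNf\C$ as a Rezk-fibrant replacement for the classification diagram $\Ncd\C$ and then compute $\bbR\ev_0$ directly on this replacement. Two ingredients are already in place. First, by \Cref{thm:Nf-and-Ncd}, the bisimplicial sets $\Ncd\C$ and $\bfNf\C$ are levelwise (and hence Rezk) equivalent. Second, by \Cref{rmk:jt-qcat-to-css}, the bisimplicial set $\bfNf\C$ is a complete Segal space, i.e.\ fibrant in Rezk's model structure.

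Because $\ev_0 \colon \ssSet \to \sSet$ is right Quillen with respect to the Rezk and Joyal model structures (as recalled just before the statement of the corollary), its right derived functor $\bbR \ev_0$ may be computed on any bisimplicial set by first replacing it with a Rezk-fibrant one. Thus, starting from the (zig-zag of) Rezk equivalences connecting $\Ncd\C$ to the Rezk-fibrant object $\bfNf\C$ provided by \Cref{thm:Nf-and-Ncd}, I would conclude that there is a categorical equivalence
\[
(\bbR\ev_0)\Ncd\C \,\simeq\, \ev_0(\bfNf\C) \,=\, \ucat{(\bfNf\C)}_0 \,\cong\, \Nf\C,
\]
where the final isomorphism is the one recorded immediately after \Cref{def:bold-Nf}.

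There is no significant obstacle: the substantive content has already been established in \Cref{thm:Nf-and-Ncd}, and this corollary amounts to the routine translation of that comparison into the language of right derived functors. The only point that deserves care is the invariance of $\bbR \ev_0$ under Rezk equivalences, which is a formal consequence of $\ev_0$ being right Quillen together with Ken Brown's lemma applied to the zig-zag from $\Ncd\C$ to the fibrant object $\bfNf\C$.
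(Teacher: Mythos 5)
Your argument is correct and is exactly the one the paper intends (the paper leaves the corollary without an explicit proof, relying on the same three ingredients you cite: \Cref{thm:Nf-and-Ncd}, the fibrancy of $\bfNf\C$ from \Cref{rmk:jt-qcat-to-css}, and the identification $\ucat{(\bfNf\C)}_0 \cong \Nf\C$). No further comment is needed.
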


\begin{remark}
By \cite[Thm.\ 6.3]{toen:unicity}, the simplicial set of derived autoequivalences of $\ssSet$ is equivalent to $\bbZ/2$, which therefore acts freely and transitively on the set of homotopy classes of derived equivalences $\weCat \to \ssSet$. Hence there are two homotopy classes, represented by $\Ncd$ and $\Ncd^\op$, respectively. One recognizes the class of such $F$ by the following criterion: the diagram of solid arrows
\begin{ctikzpicture}
  \matrix[diagram]
  {
    |(F0)| F[0] & |(N0)| \Ncd{}[0] \\
    |(F1)| F[1] & |(N1)| \Ncd{}[1] \\
  };

  \draw[->] (F0) to node[left] {$F\delta_0$} (F1);

  \draw[->,dashed] (N0) to (N1);

  \draw[->] (F0) to node[above] {$\we$} (N0);
  \draw[->] (F1) to node[above] {$\we$} (N1);
\end{ctikzpicture}
 can be completed to a (homotopy) commutative square in two ways, either with $\Ncd\delta_0$ or $\Ncd\delta_1$. The former implies $F \sim \Ncd$ and the latter $F \sim \Ncd^\op$. It follows by \Cref{thm:Nf-and-Ncd} that the restriction of such $F$ to the category of cofibration categories is equivalent to either $\bfNf$ or $\bfNf^\op$.
 
 Since $\bbR\ev_0 \colon \ssSet \to \sSet$ is an equivalence, there are two homotopy classes of derived equivalences $\weCat \to \sSet$ represented by the composites: $(\bbR\ev_0) \Ncd$ and $(\bbR\ev_0) \Ncd^\op$. Thus the restriction of such an equivalence to the category of cofibration categories is equivalent to either $\Nf$ or $\Nf^\op$. One example of such an equivalence (equivalent to $\Nf$) is the composite of the hammock localization of Dwyer and Kan \cite{dwyer-kan:simplicial-localizations, dwyer-kan:calculate-localizations} followed by the derived homotopy coherent nerve \cite{cordier:homotopy-coherent} (these are indeed equivalences by \cite[Thm.\ 1.7]{barwick-kan:characterization} and e.g.\ \cite[Sec.\ 1.5]{lurie:htt} or \cite[Cor.\ 8.2]{dugger-spivak:mapping-spaces}, respectively).
\end{remark}

\section{Frames in model categories} \label{sec:comp-in-model}

Let $\cat{M}$ be a model category.
Then its full subcategory of cofibrant objects $\cat{M}_\cof$ inherits
a structure of a cofibration category.
Dually, the full subcategory of fibrant objects $\cat{M}_\fibr$
is a fibration category.
Thus there are two different quasicategories of frames associated to $\cat{M}$:
$\nf \cat{M}_\cof$ and $\nf \cat{M}_\fibr$ (these two $\Nf$'s are, of course, different functors).
It follows from \Cref{cor:Nf-and-evNcd} and its dual that
these two quasicategories are naturally equivalent.
However, the resulting zig-zag of equivalences is rather long and unwieldy.
In this section, we discuss an alternative and much more direct comparison
involving only a single fraction.

To this end we introduce an enhanced version of the quasicategory of frames
that utilizes both the cofibrations and the fibrations of $\cat{M}$. For this reason we need to use Reedy categories as opposed to direct categories. Recall that a \textbf{Reedy category} is a category $I$, equipped with two wide subcategories $I_\sharp$ and $I_\flat$ (whose morphisms are called the \textbf{face operators} and \textbf{degeneracy operators}, respectively) such that:
\begin{enumerate}
 \item there exists a function $\deg \colon \ob I \to \bbN$ making $I_\sharp$ into a direct category and $I_\flat$ into an inverse category (i.e.\ opposite of a direct category);
 \item every morphism of $I$ factors uniquely as the composite of a degeneracy operator followed by a face operator.
\end{enumerate}

For a small category $J$, define a homotopical category $\mD J$ as follows.
Objects of $\mD J$ are all functors $[s] \times [t] \to J$
for varying $s$ and $t$.
A morphism from $x \from [s] \times [t] \to J$
to $x' \from [s'] \times [t'] \to J$ is a pair of face operators
$\phi \from [s] \ito [s']$ and $\psi \from [t'] \ito [t]$
\st{} $x (\id, \psi) = x' (\phi, \id)$ (as functors $[s] \times [t'] \to J$).
There is a functor $\mD J \to J$ that evaluates $x \from [s] \times [t] \to J$
at $(s, 0)$ and weak equivalences of $\mD J$ are created by this functor
(from the isomorphisms of $J$).
The category $\mD J$ is a Reedy category where a morphism $(\phi, \psi)$
as above is a face operator if $\psi = \id$ and a degeneracy operator
if $\phi = \id$.
The unique factorization of $(\phi, \psi)$ as the composite of
a degeneracy operator and a face operator is
$(\phi, \psi) = (\phi, \id) (\id, \psi)$.

For a model category $\cat{M}$, define a simplicial set $\mnf \cat{M}$ by:
 \[(\mnf \cat{M})_m :=\left\{ \text{homotopical, Reedy cofibrant and fibrant diagrams } \mD[m] \to \M \right\} . \]
This is indeed a simplicial set since every simplicial operator $\varphi \colon [m] \to [n]$ induces isomorphisms of all latching and matching categories of $\mD[m]$ and $\mD[n]$, and thus preserves Reedy (co)fibrancy.
We will prove that it is a quasicategory naturally equivalent to both
$\nf \cat{M}_\cof$ and $\nf \cat{M}_\fibr$.

For a category $J$, we introduce the following functors
relating $DJ$ and $\mD J$:

\begin{align*}
  i \from DJ \to \mD J\text{,}   & & x \from [s] \to J & & \mapsto & & x \from [s] \times [0] \to J \\
  q \from \mD J \to DJ\text{,}   & & x \from [s] \times [t] \to J & & \mapsto & & x \from [s] \times \{0\} \to J \\
  s \from \mD J \to \mD J\text{,}& & x \from [s] \times [t] \to J & & \mapsto & & x \sigma_0 \from [s] \times [t+1] \to J\\
\end{align*}
Then we have $qi = \id_{DJ}$, $si = i$ and there are natural weak equivalences
$\kappa \from s \to iq$ and $\lambda \from s \to \id_{\mD J}$.
All components of $\kappa$ are degeneracy operators of $\mD J$ that are dual to
the inclusions $[0] \ito [t+1]$.
Similarly, components of $\lambda$ are dual to the face operators
$\delta_0 \from [t] \ito [t+1]$.
It follows that both $\kappa i$ and $\lambda i$ are equal to $\id_i$.

The definition of $\mD$ could be extended to general simplicial sets,
but we will only use one such ad hoc extension.
Namely, we define $\mD \bdsimp{m}$ as the full subcategory of $\mD [m]$
spanned by all non-surjective functors $[s] \times [t] \to [m]$.
All the functors and transformations introduced above are natural in $J$ as well
as with respect to the inclusions $\bdsimp{m} \ito \simp{m}$.
Denote the induced inclusions $u \from D\bdsimp{m} \ito D[m]$
and $\bar{u} \from \mD\bdsimp{m} \ito \mD[m]$.

\begin{theorem}\label{thm:Nf-model-cats}
  For a model category $\cat{M}$, the simplicial set $\mnf \cat{M}$
  is a quasicategory.
  Moreover, the functors $i \from D[m] \to \mD[m]$ induce an equivalence
  $\mnf \cat{M} \to \nf \cat{M}_\cof$.
\end{theorem}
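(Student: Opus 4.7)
The plan is to establish both claims by exhibiting lifting properties for restriction functors between categories of homotopical, Reedy cofibrant-and-fibrant diagrams in $\cat{M}$. As a preliminary step, I would extend $\mD$ to arbitrary simplicial sets via left Kan extension and prove, analogously to \cite[Prop.~3.7]{szumilo:two-models}, that for any simplicial set $K$, simplicial maps $K \to \mnf \cat{M}$ are in natural bijection with homotopical, Reedy cofibrant-and-fibrant diagrams $\mD K \to \cat{M}$. This reduces the quasicategory claim to extending such diagrams along $\mD(\horn{m, i}) \ito \mD[m]$, and reduces the equivalence claim---which I would strengthen to the assertion that $i^*$ is an acyclic fibration in Joyal's model structure---to extending them along a suitable subcategory of $\mD[n]$.

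For the quasicategory claim, I first verify that $\mD(\horn{m, i}) \ito \mD[m]$ is convex in the Reedy sense: face operators with target in the subcategory have source inside, and degeneracy operators with source inside have target inside. The extension then proceeds object-by-object in order of increasing Reedy degree: for each $x \in \mD[m] \setminus \mD(\horn{m, i})$, factor the canonical arrow $L_x X \to M_x X$ in $\cat{M}$ as a cofibration followed by an acyclic fibration, taking $X(x)$ to be the intermediate object. The full model structure of $\cat{M}$ is essential here, since $\mD[m]$ has nontrivial matching categories, unlike the direct category $D[m]$ used in \cite{szumilo:two-models}.

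For the equivalence claim, a lifting problem $\bdsimp{n} \ito \simp{n}$ against $i^*$ translates, via the correspondence above, into the following task: given a homotopical Reedy cofibrant-and-fibrant diagram $\bar X \from \mD(\bdsimp{n}) \to \cat{M}$ and a homotopical Reedy cofibrant diagram $Y \from D[n] \to \cat{M}_\cof$ with $Y|_{D(\bdsimp{n})} = \bar X \circ i$, produce a homotopical Reedy cofibrant-and-fibrant diagram $X \from \mD[n] \to \cat{M}$ extending both (so $X|_{\mD(\bdsimp{n})} = \bar X$ and $X \circ i = Y$). These data assemble into a partial diagram on the subcategory $\mD(\bdsimp{n}) \cup i(D[n]) \subseteq \mD[n]$ (with intersection $i(D(\bdsimp{n}))$), and I would extend by Reedy factorization at each remaining object $x \in \mD[n]$ (necessarily a surjection with $t > 0$), exactly as in the quasicategory argument.

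The main obstacle is ensuring that the extension is homotopical, i.e., that it sends weak equivalences of $\mD[n]$ (morphisms mapping to identities under $\mD[n] \to [n]$) to weak equivalences in $\cat{M}$. The key tool should be the pair of natural weak equivalences $\kappa \from s \to iq$ and $\lambda \from s \to \id_{\mD J}$: together they exhibit $iq$ as weakly equivalent to $\id_{\mD J}$, so that the homotopicality of $X \circ i$ propagates to all of $X$ provided the Reedy factorizations are chosen compatibly with the degeneracy operators underlying $\lambda$. In practice, I expect this to follow from functorial factorizations in the Reedy model structure on $\cat{M}^{\mD[n]}$ combined with a Ken Brown style argument comparing factorizations along the zig-zag $X \xleftarrow{X\lambda} X s \xrightarrow{X\kappa} Xiq$.
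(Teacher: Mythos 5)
There is a genuine gap, concentrated in your strengthening of the second claim to the assertion that $i^*$ is an acyclic fibration in Joyal's model structure. That assertion is false for a general model category $\cat{M}$. A strict lift in your $\bdsimp{n} \into \simp{n}$ square must satisfy $X \circ i = Y$ on the nose, so at each object $i(y)$ with $y \in D[n]$ not lying in $D(\bdsimp{n})$ the value $X(i(y))$ is pinned to $Y(y)$. But the matching category of $i(y) = \big(y \colon [s] \times [0] \to [n]\big)$ in $\mD[n]$ is empty: a degeneracy operator out of it involves a face operator $\psi \colon [t'] \into [0]$, which forces $t' = 0$ and $\psi = \id$. Hence Reedy fibrancy of $X$ forces $X(i(y))$ to be a fibrant object of $\cat{M}$, whereas $Y(y)$ is merely cofibrant. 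Already for $n = 0$ this shows that $i^*$ need not even be surjective on vertices (take $Y$ with $Y([0])$ cofibrant but not fibrant), so no choice of factorizations, however compatible with $\kappa$ and $\lambda$, can produce the strict lift. A related omission affects your quasicategory argument: extending along $\mD(\horn{m,i}) \into \mD[m]$ by factoring $L_x X \to M_x X$ object by object does not automatically yield a \emph{homotopical} diagram, and you do not address this; it is the $\mD$-analogue of the inner-horn extension result for $D$ and is not free.

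The repair is to prove only a lifting-up-to-homotopy statement, which is what \Cref{lem:help-vogt} is designed for: one produces $Z$ with $Z\bar{u} = X$ strictly but with $Zi$ only weakly equivalent to $Y$ relative to $D\bdsimp{m}$, this weak equivalence then supplying the required $E[1]$-homotopy in the lower triangle. Concretely, $Z$ is obtained as a Reedy fibrant replacement of $Yq$ relative to $\mD\bdsimp{m}$ via \Cref{bisieve-lift}, after using the zig-zag $Xiq \leftarrow Xs \to X$ coming from $\kappa$ and $\lambda$ --- precisely the ingredient you identified --- to exhibit the given boundary datum $X$ as such a replacement of $Xiq = Yq\bar{u}$. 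This weaker formulation also makes the quasicategory claim free of charge, since \Cref{lem:help-vogt} transfers horn-filling from $\nf\cat{M}_\cof$ to $\mnf\cat{M}$. In short, your instinct about where the difficulty lies (homotopicality, and the role of $\kappa$ and $\lambda$) is correct, but it is deployed in service of a lifting property that fails.
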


All the constructions above, as well as the theorem, readily dualize to yield
an equivalence $\mnf \cat{M} \to \nf \cat{M}_\fibr$.

A map of Reedy categories $I \to J$ is a \textbf{bisieve} if
it carries face operators to face operators
and the induced functor $I_\sharp \to J_\sharp$ is a sieve,
and, dually, it carries degeneracy operators to degeneracy operators
and the induced functor $I_\flat \to J_\flat$ is a cosieve.

\begin{lemma}
\label{bisieve-lift}
  Let $J$ be a Reedy category and $I \ito J$ a bisieve.
  Let $X \to Y$ be a morphism of $J$-diagrams in a model category $\cat{M}$
  with $X$ Reedy cofibrant.
  Then any factorization
  \begin{ctikzpicture}
    \matrix[diagram]
    {
      |(X)| X|I & |(tX)| \tilde{X}_I & |(Y)| Y|I \\
    };

    \draw[->]  (X) to node[above] {$\we$} (tX);
    \draw[fib] (tX) to (Y);
  \end{ctikzpicture}
  into a weak equivalence (not necessarily a cofibration) and a Reedy fibration
  \st{} $\tilde{X}_I$ is Reedy cofibrant lifts to a factorization
  \begin{ctikzpicture}
    \matrix[diagram]
    {
      |(X)| X & |(tX)| \tilde{X} & |(Y)| Y \\
    };

    \draw[->]  (X) to node[above] {$\we$} (tX);
    \draw[fib] (tX) to (Y);
  \end{ctikzpicture}
  into a weak equivalence and a Reedy fibration
  \st{} $\tilde{X}$ is Reedy cofibrant.
\end{lemma}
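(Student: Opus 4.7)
The plan is to construct $\tilde{X}$ by induction on degree, processing the objects of $J \setminus I$. At each stage, having already defined $\tilde{X}$ on $I$ (as $\tilde{X}_I$) and on all objects of $J \setminus I$ of degree less than $n$, I will maintain the invariants that the partial $\tilde{X}$ is Reedy cofibrant, that $X \to \tilde{X}$ is a levelwise weak equivalence, and that $\tilde{X} \to Y$ has the Reedy fibration property on the portion constructed. The bisieve hypothesis is essential for the induction to even make sense: for $j \in J \setminus I$ of degree $n$, the latching category $\partial(J_\sharp \downarrow j)$ and matching category $\partial(j \downarrow J_\flat)$ consist of objects of strictly smaller degree, which may lie in either $I$ or $J \setminus I$, but in either case $\tilde{X}$ is already defined there; conversely, for $j \in I$, the sieve and cosieve conditions ensure that the latching and matching categories computed inside $J$ coincide with those computed inside $I$, so the given data $\tilde{X}_I$ supplies the required Reedy conditions at objects of $I$ verbatim.

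The inductive step at $j \in J \setminus I$ of degree $n$ is the standard Reedy maneuver. Assemble the canonical comparison map
\[
X_j \cup_{L_j X} L_j \tilde{X} \longrightarrow Y_j \times_{M_j Y} M_j \tilde{X}
\]
from the universal properties (using that $M_j \tilde{X}$ sits over $M_j Y$ via $\tilde{X}|_{J_{<n}} \to Y|_{J_{<n}}$, and that $X_j \to M_j X$ is compatible with these), and factor it in $\cat{M}$ as an acyclic cofibration followed by a fibration, taking $\tilde{X}_j$ to be the middle object. Then the matching map $\tilde{X}_j \twoheadrightarrow Y_j \times_{M_j Y} M_j \tilde{X}$ is a fibration by construction, producing the Reedy fibration condition at $j$, and the latching map $L_j \tilde{X} \to \tilde{X}_j$ is a cofibration, being the composite of the pushout of the Reedy cofibration $L_j X \to X_j$ with the acyclic cofibration from the factorization.

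The main obstacle is showing that $X_j \to \tilde{X}_j$ is a weak equivalence. This map factors as
\[
X_j \longrightarrow X_j \cup_{L_j X} L_j \tilde{X} \overset{\sim}{\rightarrowtail} \tilde{X}_j,
\]
and the second arrow is a weak equivalence by construction, so the issue is the pushout map $X_j \to X_j \cup_{L_j X} L_j \tilde{X}$. By the induction hypothesis, $X \to \tilde{X}$ on degrees below $n$ is a levelwise weak equivalence between Reedy cofibrant diagrams, so the induced map $L_j X \to L_j \tilde{X}$ is a weak equivalence since $L_j$ preserves levelwise weak equivalences between Reedy cofibrant diagrams. Because $L_j X \to X_j$ is a cofibration and all the objects in sight are cofibrant (a consequence of Reedy cofibrancy), Ken Brown's lemma applied to this pushout yields that $X_j \to X_j \cup_{L_j X} L_j \tilde{X}$ is a weak equivalence, closing the induction and producing the desired Reedy cofibrant lift $X \to \tilde{X} \twoheadrightarrow Y$.
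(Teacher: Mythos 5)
Your proposal is correct and follows essentially the same route as the paper's proof: induct over objects of $J$ not in $I$ by increasing degree, form the comparison map from the pushout $X_j \cup_{L_j X} L_j\tilde{X}$ to the pullback $Y_j \times_{M_j Y} M_j\tilde{X}$, factor it as an acyclic cofibration followed by a fibration, and check the three Reedy conditions exactly as you do. The only cosmetic quibble is that the fact you attribute to Ken Brown's lemma (pushout of a weak equivalence of cofibrant objects along a cofibration is a weak equivalence) is really the gluing lemma / left properness for cofibrant objects, which the paper likewise uses without comment.
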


\begin{proof}
  The argument is essentially the same as the standard construction of
  Reedy factorizations (see e.g.\ \cite[Lem.\ 7.4]{riehl-verity:theory-and-practice}).
  By induction, it suffices to extend the given factorization over
  an object $j \in J$ of a minimal degree among these not in $I$.
  Given such and object consider the following diagram.
  \begin{ctikzpicture}
    \matrix[diagram]
    {
      |(LX)| L_j X & |(LtX)| L_j \tilde{X} & & |(LY)| L_j Y \\
      |(X)|  X_j   & |(po)| \bullet        & |(tX)| \tilde{X}_j & |(pb)| \bullet & |(Y)| Y \\
      & |(MX)| M_j X & & |(MtX)| M_j \tilde{X} & |(MY)| M_j Y \\
    };

    \draw[->] (LX) to node[above] {$\we$} (LtX);
    \draw[->] (X)  to node[below] {$\we$} (po);

    \draw[cof] (LX) to (X);
    \draw[cof] (LtX) to (po);

    \draw[->] (MtX) to (MY);
    \draw[->] (pb) to (Y);

    \draw[->] (pb) to (MtX);
    \draw[->] (Y) to (MY);

    \draw[->] (LtX) to (LY);
    \draw[->] (LY) to[bend left] (Y);

    \draw[->] (X) to[bend right] (MX);
    \draw[->] (MX) to (MtX);

    \draw[cof,dashed] (po) to node[below] {$\we$} (tX);
    \draw[fib,dashed] (tX) to (pb);

  \end{ctikzpicture}
  Here, $L_j$ and $M_j$ denote the latching and matching objects at $j$.
  The morphism $L_j X \to X_j$ is a cofibration since $X$ is Reedy cofibrant
  and $L_j X \to L_j \tilde{X}_j$ is a weak equivalence since $X \to \tilde{X}$
  is a weak equivalence of Reedy cofibrant objects.
  The two objects denoted by bullets are formed by taking the pushout
  on the left and the pullback on the right and $\tilde{X}_j$ arises from
  a factorization of the resulting morphism.

  This extends the original factorization over the subcategory $I'$,
  i.e.\ the bisieve generated by $I$ and $j$.
  Denote the resulting diagram $\tilde{X}_{I'}$.
  The composite $L_j \tilde{X} \to \bullet \to \tilde{X}_j$ is a cofibration
  so $\tilde{X}_{I'}$ is Reedy cofibrant.
  The composite $X_j \to \bullet \to \tilde{X}_j$ is a weak equivalence
  and hence so is the morphism $X|I' \to \tilde{X}_{I'}$.
  Finally, the map $\tilde{X}_j \to \bullet$ is a fibration and thus
  $\tilde{X}_{I'} \to Y|I'$ is a Reedy fibration.
\end{proof}

\begin{proof}[Proof of \Cref{thm:Nf-model-cats}]
  First, observe that $i^*$ is indeed a simplicial map since each
  $i \from D[m] \ito \mD[m]$ induces isomorphisms of latching categories
  so that $i^*$ preserves Reedy cofibrant diagrams.

  By \Cref{lem:help-vogt}, it suffices to consider a square
  \begin{ctikzpicture}
    \matrix[diagram]
    {
      |(b)| \bdsimp{m} & |(mn)| \mnf \cat{M} \\
      |(s)| \simp{m}   & |(n)|  \nf \cat{M}_\cof \\
    };

    \draw[->] (b)  to (s);
    \draw[->] (mn) to (n);

    \draw[->] (b)  to node[above] {$X$} (mn);
    \draw[->] (s)  to node[below] {$Y$} (n);
  \end{ctikzpicture}
  and find a map $Z \from \simp{m} \to \mnf \cat{M}$ that makes
  the upper triangle commute and the lower one commute up to $E[1]$-homotopy
  relative to $\bdsimp{m}$.
  In particular, it then follows
  that $\mnf \cat{M}$ is a quasicategory since $\nf \cat{M}_\cof$ is.

  We have a Reedy fibrant and cofibrant diagram
  $X \from \mD\bdsimp{m} \to \cat{M}$
  and a Reedy cofibrant diagram $Y \from D[m] \to \cat{M}$ \st{} $Y u = X i$.
  Therefore, we have $Y q \bar{u} = Y u q = X i q$.
  We will correct $Y q$ to a Reedy fibrant and cofibrant diagram $Z$ so that
  $Z \bar{u} = X$ and there is a weak equivalence $Z i \weto Y$
  relative to $D\bdsimp{m}$.

  First, observe that $\kappa$ and $\lambda$ yield natural weak equivalences
  \begin{ctikzpicture}
    \matrix[diagram]
    {
      |(ip)| X i q & |(s)| X s & |(id)| X \\
    };

    \draw[->] (s) to node[above] {$\we$} (ip);
    \draw[->] (s) to node[above] {$\we$} (id);
  \end{ctikzpicture}
  relative to $D\bdsimp{m}$.
  Factor the resulting morphism $X s \to X \times X i q$ into a weak equivalence
  and a Reedy fibration
  \begin{ctikzpicture}
    \matrix[diagram]
    {
      |(s)| X s & & |(i)| X \times X i q \\
      & |(t)| \tilde{X} \\
    };

    \draw[->] (s) to (i);

    \draw[->] (s) to node[below left] {$w$} node[above right] {$\we$} (t);

    \draw[fib] (t) to node[below right] {$(r,r')$} (i);
  \end{ctikzpicture}
 so that the restriction to $D\bdsimp{m}$ is a path object factorization; in particular, the restriction of $w$ to $D \bdsimp{m}$ is a section of the restrictions of both $r$ and $r'$. Here, $r$ and $r'$ are weak equivalences and $r'$ is also a Reedy fibration
  (since $X$ is Reedy fibrant).
  Hence $r'$ admits a section $t$ since $X i q$ is Reedy cofibrant
  ($q$ induces isomorphisms of latching categories).
  Moreover, $t$ can be chosen to agree with $w$ on $D\bdsimp{m}$
  since $u$ is a bisieve.
  Thus, the composite $r t$ is a weak equivalence $X i p \weto X$
  relative to $D\bdsimp{m}$, i.e.\ $X$ is a Reedy fibrant replacement
  of $X i q$ relative to $D\bdsimp{m}$.
  Since $Y q \bar{u} = X i q$, we can lift it to
  a Reedy fibrant replacement $Y q \weto Z$ relative to $D\bdsimp{m}$
  with $Z$ Reedy cofibrant using \Cref{bisieve-lift}.

  Then we have $Z \bar{u} = X$ so that $Z$ makes the upper triangle commute.
  Moreover, the induced weak equivalence $Y = Y q i \weto Z i$
  is relative to $D\bdsimp{m}$ and hence induces an $E[1]$-homotopy relative to
  $\bdsimp{m}$ in the lower square by \cite[Lem.\ 4.6]{szumilo:two-models}.
\end{proof}

\bibliography{general-bibliography}

\end{document}